\renewcommand{\Re}{\mathrm{Re}} 
\DeclareMathOperator{\PTE}{PTE} 
\DeclareMathOperator{\im}{Im} 
\DeclareMathOperator{\ord}{ord} 
\DeclareMathOperator{\GL}{GL}
\title{Ellipsoidal designs and the Prouhet--Tarry--Escott problem}
\author{Hideki Matsumura$^{*}$}
\email{hmatsumura@tmu.ac.jp}
\address{${}^*$
Graduate School of Science, Tokyo Metropolitan University, 1-1 Minami-Osawa, Hachioji-shi, Tokyo 192-0397, Japan}
\author{Masanori Sawa$^{**}$}
\email{sawa@people.kobe-u.ac.jp}
 \address{${}^{**}$Graduate School of System Informatics, Kobe University, 1-1 Rokkodai, Nada, Kobe 657-8501, Japan}
\thanks{This research is supported by KAKENHI 24KJ0183, KAKENHI 25K17234 
and KAKENHI 50508182 of the Japan Society for the Promotion of Science (JSPS), and by the Early Support Program for Grant-in-Aid for Scientific Research of Kobe University.} 
\subjclass[2020]{primary 05E99, 11D72; secondary 05B30, 14G05} 
 \keywords{Rational ellipsoidal designs, tight design, Prouhet--Tarry--Escott problem, Borwein solution, Alpers--Tijdeman solution, linearity and symmetry}
\date{\today}
\theoremstyle{plain}
 \newtheorem{theorem}{Theorem}[section] 
  \crefname{theorem}{Theorem}{Theorems}
 \newtheorem{proposition}[theorem]{Proposition}
 \crefname{proposition}{Proposition}{Propositions}
 \newtheorem{lemma}[theorem]{Lemma}
 \crefname{lemma}{Lemma}{Lemmas}
  \crefname{corollary}{Corollary}{Corollaries}
   \crefname{conjecture}{Conjecture}{Conjectures}
 \crefname{question}{Question}{Questions}
 \newtheorem{problem}[theorem]{Problem}
   \crefname{problem}{Problem}{Problems}
    \crefname{notation}{Notation}{Notations}
\crefname{table}{Table}{Tables}
\theoremstyle{definition} 
 \newtheorem{definition}[theorem]{Definition}
  \crefname{definition}{Definition}{Definitions}
 \newtheorem{example}[theorem]{Example}
   \crefname{example}{Example}{Examples}
 \newtheorem{remark}[theorem]{Remark}
   \crefname{remark}{Remark}{Remarks}
   \crefname{claim}{Claim}{Claims}
\begin{document}

\maketitle

\tableofcontents

\begin{abstract}
The notion of ellipsoidal design was first introduced by Pandey (2022) as a full generalization of spherical designs on the unit circle $S^1$.
In this paper, we elucidate the advantages of examining the connections between ellipsoidal design and the two-dimensional Prouhet--Tarry--Escott problem, say $\PTE_2$, 
originally introduced by Alpers and Tijdeman (2007) as a natural generalization of the classical one-dimensional PTE problem ($\PTE_1$). 
We first provide a combinatorial criterion for the construction of solutions of $\PTE_2$ from a pair of ellipsoidal designs.
We also give an arithmetic proof of the Stroud-type bound for ellipsoidal designs, and then establish a classification theorem for designs with equality.
Such a classification result is closely related to an open question on the existence of rational spherical $4$-designs on $S^1$, discussed in Cui, Xia and Xiang (2019).
As far as the authors know, a solution found by Alpers and Tijdeman is the first and the only known parametric ideal solution of degree $5$ for $\PTE_2$.
Moreover, as one of our main theorems, we prove that the Alpers--Tijdeman solution is equivalent to a certain two-dimensional extension of the famous 
Borwein solution for $\PTE_1$.
As a by-product of this theorem, we discover a family of ellipsoidal $5$-designs among the Alpers--Tijdeman solution. 
\end{abstract}

\section{Introduction} \label{sect:Intro}

One of the most significant results in the interaction between geometric designs and integral lattices, is the Venkov theorem (see \cite{Venkov1984}) that derives the existence of spherical designs from shells (or layers) of integral lattices under reasonable assumptions.
Since the pioneering work by Venkov, there have been numerous publications on the construction of spherical designs via shells of integral lattices; see for example \cite{HPV2007,Pache2005}.
Such lattice-based constructions are also of great importance for coding theorists as a spherical analogue of the Assmus--Mattson theorem that provides combinatorial designs from shells of linear codes.
For a good introduction to the relationships among designs, codes and lattices, we refer the reader to Conway and Sloane \cite{CS1999}, Bannai and Bannai~\cite{BB2009}, 
and Bannai, Bannai, Ito and Tanaka \cite{BBIT_book} and references therein.

de la Harpe, Pache and Venkov~\cite{HPV2007} established that any shell of the $E_8$-root lattice never yields a spherical $8$-design if and only if the Ramanujan tau function $\tau(m)$ never vanishes at all positive integers $m$, which directly relates spherical designs to the famous Lehmer conjecture.
Afterwards, many researchers worked on discussing variations of the result by de la Harpe et al., as exemplified by~\cite{BKST2006},~\cite{BM2010} and~\cite{HNT2023}.
A particularly  influential work is Bannai and Miezaki~\cite{BM2010}, which states that any shell of the $\mathbb{Z}^2$-lattice is never a spherical $4$-design; for a generalization of this result, we refer the reader to Miezaki~\cite{Miezaki}.

Inspired by the aforementioned works, Pandey~\cite{Pandey2022} introduced the notion of ellipsoidal design.
A finite set $X$ on the ellipse
\begin{align*}
C_D(r):= \begin{cases}
\{(x,y) \in \mathbb{R}^2 \mid x^2+Dy^2=r \} & (D \equiv 1, \; 2 \pmod{4}),\\
\left\{(x,y) \in \mathbb{R}^2 \mid x^2+xy+\frac{1+D}{4}y^2=r \right\} & (D \equiv 3 \pmod{4})
\end{cases}
\end{align*}
is called an {\it ellipsoidal $n$-design} if
\begin{align*}
\frac{1}{\# X} \sum_{(x,y) \in X} f(x,y) &= \begin{cases}
\frac{1}{2 \pi \sqrt{D}} \int_{C_D(r)} f(x,y) d \tau(x,y) & (D \equiv 1, \; 2 \pmod{4}), \\
\frac{\sqrt{D}}{\pi} \int_{C_D(r)} f(x,y) d \tau(x,y) & (D \equiv 3 \pmod{4})
\end{cases}
\end{align*}
for all polynomials $f$ of degree at most $n$, where $d\tau$ is the invariant measure on $C_D(r)$.
In particular when $D = 1$, the point set $X$ is called a {\it spherical $n$-design}
on the unit circle $S^1$.
Pandey established a full generalization of the results by Bannai and Miezaki, and by Miezaki.

Pandey's work is not only crucial in the connections among (geometric) designs, codes and lattices, but also related to a classical Diophantine problem called the Prouhet--Tarry--Escott (PTE) problem, as will be clear in this paper.
Given $m, n \in \mathbb{N}$, {\it the PTE problem $(\PTE_1)$ of degree $n$ and size $m$},
asks whether there exists a disjoint pair of multisets
\[
\{a_1, \ldots, a_m\}, \{b_1, \ldots , b_m\} \subset \mathbb{Z}
\]
such that
\begin{align*}
\sum_{i=1}^m a_i^k=\sum_{i=1}^m b_i^k \quad (1 \leq k \leq n).
\end{align*}
For an early history concerning the PTE problem, we refer the reader to Dickson's book~\cite[\S~24]{Dickson}.
Borwein~\cite[Chapter 11]{Borwein2002} presents a brief survey on this subject, covering various {\it ideal solutions}, 
i.e., solutions with $m=n+1$. Existence, as well as constructions of ideal solutions, are among the central core topics of the classical PTE problem.
Among many others, a particularly important solution due to Borwein~\cite[p.\ 88]{Borwein2002} is given by
\begin{multline} \label{Borwein}
 [\pm (2m+2n), \pm (-nm-n-m+3), \pm (nm-n-m-3)] \\
=_5 [\pm (2n-2m), \pm (nm-n+m+3), \pm (-nm-n+m-3)].
\end{multline}
This can be cyclically extended to a family of solutions, which we call the two-dimensional Borwein solution (\cref{2DBorwein} of this paper)
 for the two-dimensional PTE problem originally introduced by Alpers--Tijdeman~\cite{Alpers-Tijdeman}.

{\it The two-dimensional PTE problem $(\PTE_2)$ of degree $n$ and size $m$} asks whether there exists a disjoint pair of multisets
\[
\{(a_{11}, a_{12}), \ldots, (a_{m1}, a_{m2})\}, \{(b_{11}, b_{12}), \ldots, (b_{m1}, b_{m2})\} \subset \mathbb{Z}^2
\]
such that
\[
\sum_{i=1}^m a_{i1}^{k_1} a_{i2}^{k_2}=\sum_{i=1}^m b_{i1}^{k_1} b_{i2}^{k_2} 
\quad (1 \leq k_1+k_2 \leq n).
\]
Alpers and Tijdeman \cite[\S 2.4]{Alpers-Tijdeman } found the ideal solution 
\begin{multline} \label{ATsol}
[(0,0),(2a + b,b),(3a + b,3a + 3b),(2a,6a + 4b),(-b,6a + 3b),(-a-b,3a + b)]\\
=_5 [(2a,0),(3a + b,3a + b),(2a + b,6a + 3b),(0,6a + 4b),(-a-b,3a + 3b),(-b,b)].
\end{multline} 
This has been the only known parametric ideal solution of degree $5$ for $\PTE_2$, which, to our surprise, includes a family of ellipsoidal $5$-designs 
(see Theorem~\ref{2BvsAT} of this paper).  

The classical one-dimensional PTE problem
is closely tied with other mathematical subjects, including the study of zeros of chromatic polynomials of a certain class of
finite graphs~\cite{HL},
the decomposability of a certain polynomial that can be expressed as a product of linear terms over $\mathbb{Q}$~\cite{HPT} and so on.
Also, in recent years, the applied aspects of $\PTE_1$ have been studied, as exemplified by an influential work by Costello et al.~\cite{CMN} that provides a sieving algorithm for finding pairs of consecutive smooth integers for compact isogeny-based post-quantum protocols.
Whereas, only a few publications, except for Alpers--Tijdeman~\cite{Alpers-Tijdeman } and Ghiglione~\cite{Ghiglione}, have been devoted to the connections to other mathematical subjects, as well as the applications of $\PTE_2$.

Thus in this paper, we create novel connections between $\PTE_2$ and ellipsoidal designs.
Since $S^1$ and $C_D(r)$ are homeomorphic, there may not seem to be much difference between spherical design and ellipsoidal design.
However, by considering rational points, we can find an essential gap between the two notions.
For example, we prove (\cref{Dstr}) that there exist infinitely many {\it rational} ellipsoidal $5$-designs for $D=3$, 
i.e., designs for which the components of each point are all rational numbers, whereas the existence of rational $4$-designs over $S^1$ is still an open problem; see \cref{rat} and \cite[\S 1.3]{CXX} for details.
 Incidentally, the existence of rational spherical designs can also be reduced to finding rational designs for Gegenbauer measure
$(1-x^2)^{\lambda-1/2} dx$ ($\lambda >-1/2$), and is a significant research subject in algebro-combinatorial
design theory; for example, see \cite[p.\ 208, Problem 2]{BBIT_book}. 

This paper is organized as follows:
\S2 is a brief introduction to basic facts on two-dimensional ellipsoidal harmonics that are used throughout this paper.
\S3 through \S5 are the main body of this paper.
In \S3 we prove Sobolev's theorem for ellipsoidal designs (\cref{ellSobolev}), which provides an equivalent form of the definition of ellipsoidal designs invariant under a finite subgroup of the 
^^ orthogonal group' $\mathrm{O}(Q)$ where $Q$ is a positive definite matrix in $M_2(\mathbb{Q})$; see (\ref{Q}) and (\ref{OQ}) for the definition of $\mathrm{O}(Q)$. 
In \S4, we give an arithmetic proof of the Stroud-type bound for ellipsoidal designs by combining 
the invariance of the measure $d\tau$ and a certain fundamental inequality for $\PTE_2$ (\cref{ESB}).
Moreover, we establish a classification theorem for tight designs, and then prove that there exists a rational tight ellipsoidal design only if $D=1$ or $3$ (\cref{tight}).
In \S5, we first provide a combinatorial criterion for the construction of solutions of $\PTE_2$ from a pair of ellipsoidal designs (\cref{DtoPTE}), and then obtain a family of ideal solutions of degree $5$ (\cref{Dstr}).
 Moreover, we establish a two-dimensional extension of Borwein's solution (\cref{2DBorwein}).
As one of the main results, we prove that the two-dimensional Borwein solution (\ref{2dimBorwein}) is equivalent to the Alpers--Tijdeman solution (\ref{ATsol}) over $\mathbb{Q}$, 
and in particular discover an infinite family of ellipsoidal $5$-designs among the Alpers--Tijdeman solution (\cref{2BvsAT}).
We emphasize that \cref{2BvsAT} provides not only an alternative construction of the Alpers--Tijdeman solution, but also a design-theoretic insight of the original geometric work by Alpers and Tijdeman. 
\S6 is the conclusion, where further remarks and open questions will be made.

It seems that recent developments of the PTE problem are not fully recognized by researchers who study spherical designs or ellipsoidal designs in combinatorics and algebraic number theory, and conversely, 
the geometric design theory is not known among people who work with the PTE problem and related topics in additive number theory.
It is our hope that the present paper could be of broad interest and serve as a new research platform among people working in those areas.

\section{Ellipsoidal harmonics} \label{sect:ellipsoidal}

This section is basically along the lines of \cite{Pandey2022}. 
Most of the results on two-dimensional ellipsoidal harmonics, given in this section, seem to be known somewhere else, but the authors could not trace it back.
This section thus makes a brief introduction to this topics.

For square-free $D \in \mathbb{Z}_{\geq 1}$ and $r \in \mathbb{R}_{>0}$, we define the {\it norm $r$ ellipses} by
\begin{align*} \label{ell}
C_D(r) = \begin{cases}
\{(x,y) \in \mathbb{R}^2 \mid x^2+Dy^2=r \} & (D \equiv 1, \; 2 \pmod{4}),\\
\left\{(x,y) \in \mathbb{R}^2 \mid x^2+xy+\frac{1+D}{4}y^2=r \right\} & (D \equiv 3 \pmod{4}).
\end{cases}
\end{align*}

\begin{remark} [{Cf.\ \cite[p.\ 1246 Remark]{Pandey2022}}] 
Let $K=\mathbb{Q}(\sqrt{-D})$ and $\mathcal{O}_K$ be the ring of integers of $K$.
Then the conditions for $C_D(r)$ are the norms of elements in $\mathcal{O}_K$.
Indeed, if $D \equiv 1, \; 2 \pmod{4}$, then the norm of $x+y \sqrt{-D} \in \mathcal{O}_K=\mathbb{Z}[\sqrt{-D}]$ is
\[
 N(x+y\sqrt{-D})=(x+y\sqrt{-D})(x-y\sqrt{-D})=x^2+Dy^2.
\]
If $D \equiv 3 \pmod{4}$, then the norm of $x+y (1+\sqrt{-D})/2 \in \mathcal{O}_K=\mathbb{Z}[(1+\sqrt{-D})/2]$ is 
\[
 N \left(x+y\frac{1+\sqrt{-D}}{2} \right)=\left(x+y\frac{1+\sqrt{-D}}{2} \right) \left(x+y \frac{1-\sqrt{-D}}{2} \right)=x^2+xy+\frac{1+D}{4} y^2.
\]
\end{remark}
 
Let $s$ be the arc-length parameter on $C_D(r)$.
We define the measure
 $d \tau$ on $C_D(r)$ by
\begin{align} \label{tau}
d\tau(x,y) =
\begin{cases}
 \displaystyle\frac{ds}{\sqrt{x^2/D^2+y^2}} & (D \equiv 1, \; 2 \pmod{4}),\\
  \displaystyle\frac{ds}{\sqrt{20x^2+(D^2+2D+5)y^2+(20+4D)xy}} & (D \equiv 3 \pmod{4}).
\end{cases}
\end{align}

\begin{remark} [{Cf.\ Proof of \cite[eqs. (1.4), (1.5), Theorem 1.1]{Pandey2022}}] \label{measure}
We parametrize $C_D(r)$ by 
\[(x(\theta),y(\theta))=
\begin{cases}
(\sqrt{r} \cos \theta, \sqrt{r}\sin \theta/\sqrt{D}) & (D \equiv 1, \; 2 \pmod{4}),\\
(\sqrt{r}(\cos \theta-\sin \theta/\sqrt{D}),2\sqrt{r} \sin \theta/\sqrt{D}) & (D \equiv 3 \pmod{4}).
\end{cases} \]
Since 
\[x'(\theta)^2+y'(\theta)^2=
\begin{cases}
r((-\sin \theta)^2+(\cos \theta/\sqrt{D})^2)=r(\sin^2 \theta+\cos^2 \theta/D ) & (D \equiv 1, \; 2 \pmod{4}),\\
r(\sin^2 \theta+5 \cos^2 \theta/D+ 2 \sin \theta \cos \theta/ \sqrt{D}) & (D \equiv 3 \pmod{4}),
\end{cases}\]
 we obtain
\begin{align*}
\begin{cases} 
\frac{1}{2 \pi \sqrt{D}} \int_{C_D(r)} f(x,y) d \tau(x,y)
=\frac{1}{2 \pi} \int_{0}^{2\pi} f(x(\theta),y(\theta)) d \theta & (D \equiv 1, \; 2 \pmod{4}),\\
\frac{\sqrt{D}}{\pi} \int_{C_D(r)} f(x,y) d \tau(x,y)
=\frac{1}{2 \pi} \int_{0}^{2\pi} f(x(\theta),y(\theta)) d \theta  & (D \equiv 3 \pmod{4}).
\end{cases}
\end{align*}
Note that the above parametrizations can be obtained from
\[
P^{-1}
\begin{pmatrix}
\sqrt{r} \cos \theta\\
\sqrt{r} \sin \theta
\end{pmatrix},
\]
where $P$ is the matrix in the proof of \cref{Lap}.
\end{remark}

Pandey \cite{Pandey2022} generalized spherical designs to ellipsoidal designs.
\begin{definition} [{Ellipsoidal $n$-design}] 
Let $X \subset C_D(r)$ be a finite non-empty subset and $d\tau$ be the measure given by (\ref{tau}). 
$X$ is an {\it ellipsoidal $n$-design} if
\begin{align} \label{EtD}
\frac{1}{\# X} \sum_{(x,y) \in X} f(x,y)
&= \begin{cases}
\frac{1}{2 \pi \sqrt{D}} \int_{C_D(r)} f(x,y) d \tau(x,y) & (D \equiv 1, \; 2 \pmod{4}), \\
\frac{\sqrt{D}}{\pi} \int_{C_D(r)} f(x,y) d \tau(x,y) & (D \equiv 3  \pmod{4})
\end{cases}
\end{align}
for all polynomials $f(x,y)$ of degree $\leq n$.
In particular when $D=1$, the point set $X$ is called a {\it spherical $n$-design} on $S^1$.
\end{definition}

\begin{definition} [{Ellipsoidal homogeneous space}] 
\label{HDjR}
We define a vector space of bivariate homogeneous polynomials by
\begin{align*}
H_{D,j}^{\mathbb{R}}[x,y] =
\begin{cases}
\langle \Re (x+y\sqrt{-D})^j , \; \im (x+y\sqrt{-D})^j \rangle_{\mathbb{R}} & (D \equiv 1, \; 2 \pmod{4}),\\
\left\langle \Re \left(x+y \frac{1+\sqrt{-D}}{2} \right)^j ,\; \im \left(x+y \frac{1+\sqrt{-D}}{2} \right)^j \right\rangle_{\mathbb{R}}& (D \equiv 3 \pmod{4}).
\end{cases}
\end{align*}
\end{definition}

For a positive definite matrix $Q={}^t Q \in M_2(\mathbb{Q})$, 
we define the {\it Laplacian} $\Delta_Q$ of $Q$ by
\[
\Delta_Q = \sum_{i,j=1}^2 q^{i,j} \frac{\partial^2}{\partial x_i \partial x_j},
\]
where
$Q^{-1}=(q^{i,j})$.
In what follows, let 
\begin{align}
Q =
\begin{cases} \label{Q}
\begin{pmatrix}
1 & 0\\
0 & D
\end{pmatrix} & (D \equiv 1, 2 \pmod{4}),\\
\begin{pmatrix}
1 & \frac{1}{2}\\
\frac{1}{2} & \frac{1+D}{4}
\end{pmatrix} & (D \equiv 3 \pmod{4}).
\end{cases}
\end{align}

The following result, not mentioned in Pandey \cite{Pandey2022}, develops a relation between $H_{D,j}^{\mathbb{R}}[x,y]$ and $\Delta_Q$.
\begin{proposition} \label{Lap}
$H_{D,j}^{\mathbb{R}}[x,y]$ coincides with the space of $Q$-harmonic polynomials of degree $j$ defined by
\[\{f(x,y) \in \mathbb{R}[x,y] \mid \deg f=j \; \mbox{and} \; \Delta_Q f=0 \}. \]
\end{proposition}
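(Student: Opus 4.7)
The plan is to reduce to the classical statement that on $\mathbb{R}^2$ with the standard Laplacian, the harmonic polynomials of degree $j\ge 1$ are spanned by $\Re(u+iv)^j$ and $\Im(u+iv)^j$. I will do this by finding a linear change of coordinates that simultaneously diagonalizes the quadratic form associated with $Q$ to $u^2+v^2$ and identifies the generators of $H_{D,j}^{\mathbb{R}}[x,y]$ with $\Re(u+iv)^j$ and $\Im(u+iv)^j$.

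First, I choose a matrix $P\in \GL_2(\R)$ with $Q={}^tPP$. Concretely, set
\[
P=\begin{pmatrix}1 & 0\\ 0 & \sqrt{D}\end{pmatrix} \quad (D\equiv 1,2\pmod 4), \qquad
P=\begin{pmatrix}1 & 1/2\\ 0 & \sqrt{D}/2\end{pmatrix} \quad (D\equiv 3\pmod 4),
\]
and put $(u,v)^t=P(x,y)^t$. A direct check shows $u^2+v^2={}^t(x,y)Q(x,y)$ in both cases, and moreover
\[
u+iv = x+y\sqrt{-D} \quad\text{or}\quad u+iv = x+y\,\tfrac{1+\sqrt{-D}}{2},
\]
according to the parity of $D\bmod 4$. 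Consequently the two generators of $H_{D,j}^{\mathbb{R}}[x,y]$ coincide, after the substitution $(u,v)=P(x,y)$, with $\Re(u+iv)^j$ and $\Im(u+iv)^j$.

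Next I verify that the substitution carries $\Delta_Q$ to the standard Laplacian $\partial_u^2+\partial_v^2$. The chain rule gives $\nabla_{x}={}^tP\,\nabla_{u}$, so for any smooth $f$,
\[
\Delta_Q f \;=\; \nabla_x^{\,t}Q^{-1}\nabla_x f \;=\; \nabla_u^{\,t}\bigl(P\,Q^{-1}\,{}^tP\bigr)\nabla_u f \;=\; \nabla_u^{\,t}\nabla_u f,
\]
where the last equality uses $P Q^{-1}\,{}^tP = P({}^tP P)^{-1}\,{}^tP = I$. Thus $f$ is $Q$-harmonic in $(x,y)$ if and only if its pushforward $\widetilde{f}(u,v):=f(P^{-1}(u,v)^t)$ is harmonic in the usual sense. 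Since $(u+iv)^j$ is holomorphic, its real and imaginary parts are harmonic, and pulling back we conclude $H_{D,j}^{\mathbb{R}}[x,y]\subset \ker \Delta_Q$.

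Finally, I argue the reverse inclusion by a dimension count that is inherited from the classical setting. The homogeneous degree-$j$ polynomials in two variables form a space of dimension $j+1$, and $\Delta_{u,v}$ surjects onto the degree-$(j-2)$ piece of dimension $j-1$, so its kernel has dimension $2$ for $j\ge 2$ (and dimension $j+1$ for $j=0,1$, matching $H_{D,j}^{\mathbb{R}}$). Since the substitution $(u,v)=P(x,y)$ is a linear isomorphism preserving degree and sending $\Delta_Q$ to $\Delta_{u,v}$, the $Q$-harmonic polynomials of degree $j$ form a space of the same dimension as $H_{D,j}^{\mathbb{R}}[x,y]$, so the inclusion established above is an equality. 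The only calculational subtlety is the second case $D\equiv 3\pmod 4$, where one must check carefully that the matrix $P$ above really satisfies $Q={}^tPP$ and that $u+iv$ takes the asserted form; both are direct, so no step is genuinely hard.
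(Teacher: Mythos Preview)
Your proof is correct and follows essentially the same approach as the paper: both introduce the same matrix $P$ with $Q={}^tPP$, change variables by $(u,v)^t=P(x,y)^t$, and reduce to the classical fact that the degree-$j$ harmonic polynomials in two variables are spanned by $\Re(u+iv)^j$ and $\Im(u+iv)^j$. The paper simply asserts the chain-rule identity $\Delta_Q=\Delta_{(u,v)}$ and cites the $D=1$ case as well known, whereas you spell out the computation $PQ^{-1}\,{}^tP=I$, verify explicitly that $u+iv$ equals $x+y\sqrt{-D}$ (resp.\ $x+y\,\tfrac{1+\sqrt{-D}}{2}$), and supply the dimension count; these are exactly the details the paper leaves implicit.
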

\begin{proof} 
We reduce to the case $D=1$, which is classically well-known (see \cite{EMOT} for example).
Let
\begin{align*}
P =
\begin{cases}
\begin{pmatrix}
1 & 0\\
0 & \sqrt{D}
\end{pmatrix} & (D \equiv 1,\; 2 \pmod{4}),\\
\begin{pmatrix}
1 & 1/2\\
0 & \sqrt{D}/2
\end{pmatrix} & (D \equiv 3 \pmod{4}).
\end{cases}
\end{align*}
We do a change of the variable given by 
\begin{align*}
\begin{pmatrix}
x'\\
y'
\end{pmatrix}=
P\begin{pmatrix}
x\\
y
\end{pmatrix}
\end{align*}
and let 
\[
\Delta = \frac{\partial^2}{\partial x'^2}+\frac{\partial^2}{\partial y'^2}. 
\]
Then it follows by the chain rule that $\Delta'=\Delta_Q$. This completes the proof.
\end{proof}

As in the case of spherical designs, the integrations in (\ref{EtD}) are invariant under
the action of the orthogonal group.
For a positive definite matrix $Q$ given in (\ref{Q}), we define
the {\it orthogonal group} $\mathrm{O}(Q)$ by
\begin{align} \label{OQ}
\mathrm{O}(Q) = \{A \in \GL_2(\mathbb{R}) \mid  {}^tA QA=Q\}. 
\end{align}

\begin{proposition} \label{OQ-inv}
Let $0 \leq \phi< 2 \pi$. Then $\mathrm{O}(Q)$ is generated by
\begin{align*}
A:=\begin{cases}
\begin{pmatrix}
1 & 0\\
0 & -1
\end{pmatrix},\\ 
\begin{pmatrix}
1 & 1\\
0 & -1
\end{pmatrix}, 
\end{cases}
&& A_{\phi}:=
\begin{cases}
\begin{pmatrix}
\cos \phi & -\sqrt{D} \sin \phi\\
\frac{\sin \phi}{\sqrt{D}}& \cos \phi
\end{pmatrix} & (D \equiv 1, 2 \pmod{4}),\\
\begin{pmatrix}
\cos \phi-\frac{\sin \phi}{\sqrt{D}} & -\frac{(D+1)}{2\sqrt{D}} \sin \phi\\
\frac{2\sin \phi}{\sqrt{D}}& \cos \phi+\frac{\sin \phi}{\sqrt{D}}
\end{pmatrix} & (D \equiv 3 \pmod{4}).
\end{cases}
\end{align*}
Moreover, $d\tau$ is $\mathrm{O}(Q)$-invariant.
\end{proposition}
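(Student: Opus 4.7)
The plan is to reduce everything to the standard $D=1$ case by conjugating with the matrix $P$ from the proof of \cref{Lap}, exactly mirroring the strategy used there to transfer the Laplacian.

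First, I would verify directly that ${}^t P P = Q$ in both cases (this is the matrix version of what is implicit in \cref{measure}, and it is a short computation in each residue class mod $4$). From this it follows that the condition ${}^t A Q A = Q$ defining $A\in\mathrm{O}(Q)$ is equivalent to $(PAP^{-1})^t (PAP^{-1}) = I$, i.e., to $PAP^{-1}\in\mathrm{O}(2)$. Hence
\[
\mathrm{O}(Q) \;=\; P^{-1}\,\mathrm{O}(2)\,P.
\]

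Next, I would use the classical fact that $\mathrm{O}(2)$ is generated by the reflection $\mathrm{diag}(1,-1)$ together with the rotation family $R_\phi=\bigl(\begin{smallmatrix}\cos\phi & -\sin\phi\\ \sin\phi &\cos\phi\end{smallmatrix}\bigr)$, $0\le\phi<2\pi$. Conjugating these by $P^{-1}$ will then produce generators of $\mathrm{O}(Q)$, and a direct (though slightly tedious in the $D\equiv 3\pmod 4$ case) matrix multiplication shows that
\[
P^{-1}\,\mathrm{diag}(1,-1)\,P \;=\; A,\qquad P^{-1}\,R_\phi\,P \;=\; A_\phi,
\]
with the explicit forms stated in the proposition. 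This gives the first assertion.

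For the invariance of $d\tau$, I would use the parametrization $(x(\theta),y(\theta)) = P^{-1}(\sqrt r\cos\theta,\sqrt r\sin\theta)^t$ of $C_D(r)$ displayed in \cref{measure}. Acting by $A_\phi = P^{-1}R_\phi P$ sends such a parametrized point to $P^{-1}(\sqrt r\cos(\theta+\phi),\sqrt r\sin(\theta+\phi))^t$, i.e.\ corresponds to the translation $\theta\mapsto\theta+\phi$ on the parameter, while $A$ corresponds to the orientation-reversing involution $\theta\mapsto -\theta$ (or $\theta\mapsto\pi-\theta$ in the $D\equiv 3$ case). Since \cref{measure} shows that, up to the constant of the statement, $d\tau$ pulls back to the measure $d\theta$ on $[0,2\pi)$, and $d\theta$ is invariant under rotations and reflections of the circle, the measure $d\tau$ is invariant under every generator and hence under all of $\mathrm{O}(Q)$.

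The only step that is not purely routine is the matrix multiplication for $D\equiv 3\pmod 4$, where the off-diagonal terms of $Q$ (and of $P$) create cross-terms that must be organized carefully so that the final entries simplify to the stated expressions involving $(D+1)/(2\sqrt D)$; I expect this bookkeeping to be the main practical obstacle, but not a conceptual one.
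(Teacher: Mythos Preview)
Your proposal is correct and follows essentially the same route as the paper: conjugate by $P$ to identify $\mathrm{O}(Q)$ with $P^{-1}\mathrm{O}(2)P$, then transport the standard generators of $\mathrm{O}(2)$. Your treatment of the $d\tau$-invariance via the pullback to $d\theta$ is slightly more conceptual than the paper's, which simply defers to direct calculations as in \cref{measure}, but the underlying idea is the same.
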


\begin{proof} 
Let $P$ be as in the proof of \cref{Lap}.
Suppose that $B \in \mathrm{O}(Q)$.
Since ${}^tB QB=Q$ and $Q={}^tP P$, we have $PBP^{-1} \in \mathrm{O}(2)$.
Note that $\mathrm{O}(Q)$ is generated by
\[\begin{pmatrix}
1 & 0\\
0 & -1
\end{pmatrix}, \;
\begin{pmatrix}
\cos \phi & - \sin \phi\\
\sin \phi& \cos \phi
\end{pmatrix}.\]
The invariance of $d \tau$ can be checked by direct calculations as in \cref{measure}. 
\end{proof}

The following provides an equivalent form of the definition of ellipsoidal $n$-designs: 

\begin{theorem} [{\cite[Theorem 1.1]{Pandey2022}}] \label{HDj} 
Let $X$ be a finite non-empty subset of $C_D(r)$. Then the following are equivalent: 
\begin{enumerate}
\item[(i)] $X$ is an ellipsoidal $n$-design.
\item[(ii)] It holds that
\begin{align} \label{EnD}
\sum_{(x,y) \in X} f(x,y) = 0
\end{align}
for all $f(x,y) \in H_{D,j}^{\mathbb{R}}[x,y]$ with $0 <j \leq n$.
\end{enumerate}
\end{theorem}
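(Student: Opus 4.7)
The plan is to reduce Theorem \ref{HDj} to the classical orthogonality of the trigonometric system via the parametrizations of $C_D(r)$ introduced in Remark \ref{measure}. This is the spherical-design argument transplanted to the ellipsoidal setting, and the whole proof hinges on one clean algebraic identity.

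The key computation I would do first is to check that, in both residue classes of $D$ modulo $4$, the chosen parametrization satisfies
\[
x(\theta) + y(\theta)\,\alpha_D = \sqrt{r}\,(\cos\theta + i\sin\theta),
\]
where $\alpha_D = \sqrt{-D}$ if $D \equiv 1,2\pmod{4}$ and $\alpha_D = (1+\sqrt{-D})/2$ if $D\equiv 3\pmod{4}$. The first case is immediate from $(x(\theta),y(\theta)) = (\sqrt{r}\cos\theta,\sqrt{r}\sin\theta/\sqrt{D})$; the second is a one-line expansion in which the $-\sqrt{r}\sin\theta/\sqrt{D}$ and $+\sqrt{r}\sin\theta/\sqrt{D}$ terms cancel, leaving $\sqrt{r}\cos\theta + \sqrt{r}\sin\theta\cdot i$. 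Raising both sides to the $j$-th power, the generators $\Re(x+y\alpha_D)^j$ and $\Im(x+y\alpha_D)^j$ of $H_{D,j}^{\mathbb{R}}[x,y]$ restrict on $C_D(r)$ to $r^{j/2}\cos(j\theta)$ and $r^{j/2}\sin(j\theta)$, respectively.

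With this identification the two implications are formal. For (i)$\Rightarrow$(ii), any $f\in H_{D,j}^{\mathbb{R}}[x,y]$ with $1\le j\le n$ becomes a linear combination of $\cos(j\theta)$ and $\sin(j\theta)$ under the parametrization, so by Remark \ref{measure} its normalized $\tau$-integral equals $(1/2\pi)\int_0^{2\pi}(\cdot)\,d\theta = 0$; the $n$-design condition then forces $\sum_{(x,y)\in X}f(x,y)=0$. For (ii)$\Rightarrow$(i), observe that $x(\theta)$ and $y(\theta)$ are themselves trigonometric polynomials of degree $1$, hence any polynomial $f(x,y)$ of total degree $\le n$ pulls back to a trigonometric polynomial of degree $\le n$ in $\theta$, admitting a unique Fourier decomposition $a_0+\sum_{j=1}^n(a_j\cos(j\theta)+b_j\sin(j\theta))$. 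By the key identity, each nonconstant Fourier mode is the restriction to $C_D(r)$ of an element of $H_{D,j}^{\mathbb{R}}[x,y]$, so hypothesis (ii) makes it vanish when summed over $X$, while the $\tau$-integral strips off the same nonconstant modes by orthogonality; both sides of (\ref{EtD}) therefore reduce to the common constant $a_0$.

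The only step with any friction is the algebraic identity $x(\theta)+y(\theta)(1+\sqrt{-D})/2 = \sqrt{r}\,e^{i\theta}$ in the $D\equiv 3\pmod{4}$ case; once that is in hand, the rest of the argument is a routine invocation of the orthogonality relations $\int_0^{2\pi}\cos(j\theta)\,d\theta = \int_0^{2\pi}\sin(j\theta)\,d\theta = 0$ for $j\ge 1$.
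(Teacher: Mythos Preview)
Your argument is correct. The key identity $x(\theta)+y(\theta)\alpha_D=\sqrt{r}\,e^{i\theta}$ holds in both residue classes as you checked, so the generators of $H_{D,j}^{\mathbb{R}}[x,y]$ restrict to $r^{j/2}\cos(j\theta)$ and $r^{j/2}\sin(j\theta)$, and the equivalence follows from the orthogonality of the trigonometric system together with the fact that any polynomial of degree $\le n$ pulls back to a trigonometric polynomial of degree $\le n$.

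The paper does not supply its own proof of this theorem; it is quoted from Pandey~\cite[Theorem~1.1]{Pandey2022}. Your approach is exactly the one the surrounding material in \S\ref{sect:ellipsoidal} sets up: it uses the parametrization and integral identity of Remark~\ref{measure} directly. The paper also records the polynomial-side decomposition (Theorem~\ref{dec}), which gives an alternative but equivalent route for (ii)$\Rightarrow$(i): write $f\in P_j^{\mathbb{R}}[x,y]$ as $\sum_k (\text{defining form})^k h_{j-2k}$ with $h_{j-2k}\in H_{D,j-2k}^{\mathbb{R}}[x,y]$, use that the defining form is constant $r$ on $C_D(r)$, and apply (ii) to the pieces with $j-2k>0$. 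Your Fourier argument is the image of this decomposition under the parametrization, so the two approaches are essentially the same computation viewed from opposite sides of the map $\theta\mapsto(x(\theta),y(\theta))$.
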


As in the classical spherical harmonics (\cite[Theorem 1.1.3 (2)]{BBbook}), the space of homogeneous polynomials can be represented by harmonic polynomials.

\begin{theorem} [{\cite[Lemma 2.1, Proof of Theorem 1.1]{Pandey2022}}] \label{dec}
Let $D \geq 1$ be a square-free integer and $P_j^{\mathbb{R}}[x,y]$ be the set of homogeneous polynomials of degree $j$.
Then it holds that
\begin{align*}
P_j^{\mathbb{R}}[x,y] =
\begin{cases}
\displaystyle\bigoplus_{j=0}^{\lfloor \frac{k}{2} \rfloor} (x^2+Dy^2)^k H_{D,j-2k}^{\mathbb{R}}[x,y] & (D \equiv 1, \; 2 \pmod{4}),\\
\displaystyle\bigoplus_{j=0}^{\lfloor \frac{k}{2} \rfloor} \left(x^2+xy+\frac{1+D}{4}y^2 \right)^k H_{D,j-2k}^{\mathbb{R}}[x,y] & (D \equiv 3 \pmod{4}).
\end{cases}
\end{align*}
In particular, the space of polynomials when restricted to $C_D(r)$ of degree $\leq j$ is a direct sum of $H_{D,k}^{\mathbb{R}}[x,y]$ with $0<k \leq j$.
\end{theorem}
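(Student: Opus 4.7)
The plan is to reduce the statement to the classical decomposition of homogeneous polynomials on $\mathbb{R}^2$ into harmonic components (the case $D=1$), and then transport the result via the linear change of variables already used in the proof of \cref{Lap}.

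First I would recall the matrix $P$ from the proof of \cref{Lap}, which satisfies $Q = {}^tP\, P$. The pullback $P^{\ast} : \mathbb{R}[x',y'] \to \mathbb{R}[x,y]$ induced by the substitution $(x',y')^t = P(x,y)^t$ is a graded algebra isomorphism that enjoys three compatibilities: it preserves total degree, so $P^{\ast}(P_j^{\mathbb{R}}[x',y']) = P_j^{\mathbb{R}}[x,y]$; it sends the standard form $x'^2+y'^2$ precisely to $Q(x,y)$ (this is the identity $Q={}^tP P$ read as a quadratic form); and by \cref{Lap} it sends the space $\mathcal{H}_m$ of classical degree-$m$ harmonic polynomials bijectively onto $H_{D,m}^{\mathbb{R}}[x,y]$, because it intertwines the standard Laplacian $\Delta$ with $\Delta_Q$.

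The classical $\mathbb{R}^2$ result (see, e.g., \cite{EMOT}) states that every $f \in P_j^{\mathbb{R}}[x',y']$ decomposes uniquely as
\[
f = \sum_{k=0}^{\lfloor j/2 \rfloor} (x'^2+y'^2)^k\, h_{j-2k}, \qquad h_{j-2k} \in \mathcal{H}_{j-2k}.
\]
A dimension count confirms directness of the sum: $\dim \mathcal{H}_0=1$ and $\dim \mathcal{H}_m=2$ for $m\ge 1$, so the right-hand side has total dimension $j+1 = \dim P_j^{\mathbb{R}}[x',y']$. Applying the isomorphism $P^{\ast}$ to this identity transports it verbatim to the claimed decomposition for general $D$, since $P^{\ast}$ sends $(x'^2+y'^2)^k \mathcal{H}_{j-2k}$ to $Q(x,y)^k\, H_{D,j-2k}^{\mathbb{R}}[x,y]$ and preserves the direct-sum structure.

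For the ``in particular'' clause, on $C_D(r)$ the factor $Q(x,y)^k$ becomes the constant $r^k$, so each summand $Q(x,y)^k H_{D,j-2k}^{\mathbb{R}}[x,y]$ restricts to $H_{D,j-2k}^{\mathbb{R}}[x,y]$; summing over $0\le j\le n$ identifies the space of polynomial restrictions of degree $\le n$ with $\bigoplus_{0\le k\le n} H_{D,k}^{\mathbb{R}}[x,y]$. The only real obstacle is bookkeeping: verifying in each of the two cases $D\equiv 1,2\pmod 4$ and $D\equiv 3\pmod 4$ that the explicit matrix $P$ of \cref{Lap} genuinely sends the Euclidean form to $Q$, which is a short direct matrix computation.
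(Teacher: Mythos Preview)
Your proposal is correct and is essentially the approach the paper has in mind: the paper does not supply its own proof but cites Pandey's Lemma~2.1, and the Remark immediately following \cref{dec} makes clear that Pandey's argument is precisely the reduction to the classical $D=1$ decomposition via the linear change of variables $(x',y')=P(x,y)$, exactly as you carry out.
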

\begin{remark}
In \cite[Lemma 2.1 (1)]{Pandey2022}, 
$D \equiv 3 \pmod {4}$ should be $D \equiv 1, \; 2 \pmod{4}$.
In the proof for $D \equiv 3 \pmod {4}$, $(x',y')=(x+y/2, 2y/\sqrt{D})$ should be $(x',y')=(x+y/2, \sqrt{D}y/2)$.
\end{remark}
 
The following concept was introduced by Pandey \cite{Pandey2022} as a full generalization of spherical $T$-designs (see Miezaki \cite{Miezaki}).

\begin{definition} [{Ellipsoidal $T$-design}] 
Let $X \subset C_D(r)$ be a finite non-empty subset and $T \subset \mathbb{N}$ be a (possibly infinite) subset.
A finite non-empty subset $X \subset C_D(r)$ is an {\it ellipsoidal $T$-design} if (\ref{EnD}) holds for all $f(x,y) \in H_{D,j}^{\mathbb{R}}[x,y]$ with $j \in T$.
\end{definition}

\begin{definition} \label{def:shell1}
We define the {\it norm $r$ shells} in $C_D(r)$ by
\begin{align*} 
\Lambda_D^r = C_D(r) \cap \mathbb{Z}^2.
\end{align*}
\end{definition}

\begin{theorem} [{\cite[Theorem 1.2]{Pandey2022}, Ellipsoidal shell}] \label{TDD}
If $D \in \{1,2,3,7,11,19,43,67,163 \}$, then every non-empty shells $\Lambda_D^r$ are ellipsoidal $T_D$-designs, where
\begin{align*}
T_D:= \begin{cases}
\mathbb{Z}^{+} \setminus 4\mathbb{Z}^{+} & (D=1),\\
\mathbb{Z}^{+} \setminus 6\mathbb{Z}^{+} & (D=3),\\
\mathbb{Z}^{+} \setminus 2\mathbb{Z}^{+} & (\mbox{otherwise}).
\end{cases}
\end{align*}
\end{theorem}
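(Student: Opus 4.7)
The strategy is to identify the shell $\Lambda_D^r$ with the set of norm-$r$ elements of $\mathcal{O}_K$, where $K = \mathbb{Q}(\sqrt{-D})$, and to exploit the action of the unit group to kill the relevant harmonic power sums. By \cref{HDj} it suffices to verify that
\[
\sum_{(x,y)\in\Lambda_D^r} f(x,y) = 0
\]
for each $j \in T_D$ and each $f \in H_{D,j}^{\mathbb{R}}[x,y]$. By the description of $H_{D,j}^{\mathbb{R}}[x,y]$ in \cref{HDjR}, this two-dimensional real space is spanned by $\Re(\alpha^j)$ and $\im(\alpha^j)$, where
\[
\alpha(x,y) := \begin{cases} x + y\sqrt{-D} & (D \equiv 1,2 \pmod{4}), \\ x + y(1+\sqrt{-D})/2 & (D \equiv 3 \pmod{4}); \end{cases}
\]
so the conclusion reduces to the single complex identity $\sum_{(x,y) \in \Lambda_D^r} \alpha(x,y)^j = 0$.

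Next, the map $(x,y)\mapsto \alpha(x,y)$ is a $\mathbb{Z}$-linear bijection from $\mathbb{Z}^2$ to $\mathcal{O}_K$ sending $\Lambda_D^r$ onto $S_r := \{\alpha \in \mathcal{O}_K : N(\alpha) = r\}$. The unit group $U := \mathcal{O}_K^{\times}$ acts on $S_r$ by multiplication; since $r > 0$ forces every element of $S_r$ to be nonzero, the action is free and the shell partitions into orbits of size $|U|$. Writing $S_r = \bigsqcup_i U\cdot\alpha_i$ and factoring yields
\[
\sum_{\alpha\in S_r}\alpha^j = \Bigl(\sum_{u\in U}u^j\Bigr)\Bigl(\sum_i\alpha_i^j\Bigr),
\]
so everything reduces to determining for which $j$ the unit sum $\sum_{u\in U}u^j$ vanishes.

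For the nine listed $D$, the unit group is a finite cyclic group of roots of unity: $U = \langle i\rangle$ of order $4$ when $D=1$, $U = \langle(1+\sqrt{-3})/2\rangle$ of order $6$ when $D=3$, and $U = \{\pm 1\}$ of order $2$ otherwise. The standard character-sum computation then gives $\sum_{u\in U}u^j = |U|$ if $|U|$ divides $j$ and $0$ otherwise, and comparing this vanishing criterion with the three cases in the definition of $T_D$ shows that the sum is zero precisely for $j\in T_D$, which completes the argument.

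I expect the proof to be mostly organizational rather than technically deep. The single point requiring care is the case $D\equiv 3\pmod{4}$: the embedding $(x,y)\mapsto x+y\sqrt{-D}$ does not even land in $\mathcal{O}_K$, so one must consistently use $\alpha(x,y) = x+y(1+\sqrt{-D})/2$, under which the defining quadratic form of $C_D(r)$ coincides with the norm form on $\mathcal{O}_K$ and the basis of $H_{D,j}^{\mathbb{R}}[x,y]$ in \cref{HDjR} matches $\Re(\alpha^j)$ and $\im(\alpha^j)$. Once this identification is set up correctly, the orbit decomposition and the unit-sum calculation go through uniformly across all nine values of $D$.
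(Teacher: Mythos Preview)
The paper does not supply its own proof of this statement: \cref{TDD} is quoted verbatim from \cite[Theorem 1.2]{Pandey2022} and used as background, so there is nothing in the present paper to compare your argument against.

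That said, your argument is correct and is essentially the natural one. The reduction via \cref{HDj} and \cref{HDjR} to the complex power-sum identity $\sum_{\alpha\in S_r}\alpha^j=0$ is exactly right, and the orbit decomposition under the free action of $\mathcal{O}_K^\times$ gives the factorisation you write; the character-sum computation for $\sum_{u\in U}u^j$ then yields precisely the sets $T_D$ in the statement. Your caveat about using $\alpha(x,y)=x+y(1+\sqrt{-D})/2$ in the case $D\equiv 3\pmod 4$ is well placed and consistent with the paper's conventions. One minor observation: your proof does not actually use the class-number-one hypothesis on $D$, so it in fact establishes the $T_D$-design property for \emph{every} squarefree $D\ge 1$ (with $T_D=\mathbb{Z}^+\setminus 2\mathbb{Z}^+$ whenever $D\notin\{1,3\}$). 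The restriction to the nine Heegner values is not needed for the design conclusion itself; it enters in Pandey's work for other reasons (notably the converse direction and the connection with Hecke characters), not for the implication stated here.
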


\begin{example} [{$5$-design}]
\label{exam:ES} 
The minimal shell of the Eisenstein lattice
\[
\Lambda_3^1 = \{ (\pm 1,0), (0,\pm1), (1,-1), (-1,1) \} 
\]
is an ellipsoidal $T$-design for $T=\mathbb{Z}^{+} \setminus 6\mathbb{Z}^{+}$.
In particular, it is an ellipsoidal $5$-design.
\end{example}

\section{Sobolev's theorem for ellipsoidal designs} \label{Sobolev}

Throughout this section, let $Q$ be a positive definite matrix defined by (\ref{Q}).
Let $G \subset \mathrm{O}(Q)$ be a finite subgroup and
\[
 \mathcal{P}_n(\mathbb{R}^2) = \{f(x,y) \in \mathbb{R}[x,y] \mid \deg f \leq n \}.
\]
We define the action of $M \in G$ on $f$ by
\[
 f^M(x,y) = f((x,y) \cdot {}^t(M^{-1})).
\]
We say that a polynomial $f$ is {\it $G$-invariant} if $f^M=f$ for all $M \in G$.
We denote the set of $G$-invariant polynomials in $\mathcal{P}_n(\mathbb{R}^2)$ by $\mathcal{P}_n(\mathbb{R}^2)^G$.

The following theorem is a classical result due to Sobolev:

\begin{theorem} [{\cite{Sobolev1962}, Sobolev's Theorem}] \label{Sobolevd=2}
Let $G \subset \mathrm{O}(2)$ be a finite subgroup and $X \subset S^1$ be a $G$-invariant finite subset.
Then the following conditions are equivalent:
\begin{enumerate}
\item[(i)] $X$ is a spherical $n$-design on $S^1$. 
\item[(ii)] Eq.\ $(\ref{EtD})$ for $D=1$ holds for all polynomials $f \in \mathcal{P}_n^G(\mathbb{R}^2)$. 
\end{enumerate}
\end{theorem}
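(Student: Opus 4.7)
The proof splits into the two implications. The direction (i) $\Rightarrow$ (ii) is immediate since $\mathcal{P}_n^G(\mathbb{R}^2) \subset \mathcal{P}_n(\mathbb{R}^2)$: the spherical $n$-design identity holds for every polynomial of degree $\leq n$, in particular for every $G$-invariant one.

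For the converse (ii) $\Rightarrow$ (i), the plan is to apply the classical Reynolds-operator (averaging) trick. Given any $f \in \mathcal{P}_n(\mathbb{R}^2)$, I would define
$$f^G(x,y) := \frac{1}{\# G}\sum_{M \in G} f^M(x,y).$$
Since each $M \in G \subset \mathrm{O}(2) \subset \GL_2(\mathbb{R})$ acts linearly, the assignment $f \mapsto f^M$ preserves the total degree, so $f^G \in \mathcal{P}_n^G(\mathbb{R}^2)$.

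Next I would verify two $G$-symmetry identities. On the cubature side, since $X$ is $G$-invariant, for each $M \in G$ the substitution $(x,y) \mapsto (x,y)\cdot {}^t(M^{-1})$ permutes $X$, whence $\sum_{(x,y)\in X} f^M(x,y) = \sum_{(x,y)\in X} f(x,y)$; averaging over $M$ yields $\sum_{(x,y)\in X} f^G(x,y) = \sum_{(x,y)\in X} f(x,y)$. On the integral side, \cref{OQ-inv} specialized to $Q=I_2$ (i.e. $D=1$) says that $d\tau$ is $\mathrm{O}(2)$-invariant, hence $G$-invariant; the same change of variable therefore gives $\int_{S^1} f^G \, d\tau = \int_{S^1} f \, d\tau$. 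Applying hypothesis (ii) to the $G$-invariant polynomial $f^G$ and combining the two identities produces the spherical $n$-design identity (\ref{EtD}) with $D=1$ for the arbitrary $f$.

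The argument is essentially formal, so I do not anticipate a substantive obstacle. The only point that requires care is keeping the two action conventions consistent: one must verify that $f \mapsto f^M$ genuinely defines a left $G$-action on polynomials, so that $f^G$ is $G$-invariant in the sense of the paper, and that the induced substitution on the variables is compatible with the notion of $G$-invariance used for the point set $X$, so that the bijective change of variable on $X$ really does leave the cubature sum unchanged. Once this bookkeeping is fixed, the same proof extends verbatim to the ellipsoidal setting (replacing $\mathrm{O}(2)$ by $\mathrm{O}(Q)$ and $S^1$ by $C_D(r)$), which is presumably how \cref{ellSobolev} is to be obtained.
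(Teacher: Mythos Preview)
Your proposal is correct and follows exactly the Reynolds-operator averaging argument that the paper uses. Note that the paper does not actually give a separate proof of \cref{Sobolevd=2} (it is only cited from Sobolev), but your argument coincides step for step with the paper's proof of the generalization \cref{ellSobolev}, of which the $D=1$ case is a specialization; your closing remark anticipating this is spot on.
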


The following is a main result in this section, which is a generalization of Sobolev's theorem for $S^1$.

\begin{theorem}[{Generalized Sobolev Theorem}] \label{ellSobolev}
Let $G \subset \mathrm{O}(Q)$ be a finite subgroup and $X \subset C_D(r)$ be a $G$-invariant finite subset.
Then the following conditions are equivalent:
\begin{enumerate}
\item[(i)] $X$ is an ellipsoidal $n$-design. 
\item[(ii)] Eq.\ $(\ref{EtD})$ holds for all polynomials $f \in \mathcal{P}_n^G(\mathbb{R}^2)$.
\end{enumerate}
\end{theorem}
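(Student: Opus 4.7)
The implication (i) $\Rightarrow$ (ii) is immediate from the inclusion $\mathcal{P}_n^G(\mathbb{R}^2) \subset \mathcal{P}_n(\mathbb{R}^2)$, so the substance lies in the converse. My plan is to reduce an arbitrary polynomial of degree $\leq n$ to a $G$-invariant one by the classical Reynolds-operator (group-averaging) trick, and then appeal to hypothesis (ii) together with the invariance properties already established in \S\ref{sect:ellipsoidal}.

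Given $f \in \mathcal{P}_n(\mathbb{R}^2)$, form its symmetrization
\[
\tilde{f} := \frac{1}{\# G} \sum_{M \in G} f^M.
\]
Since the $G$-action preserves degree, $\tilde{f} \in \mathcal{P}_n^G(\mathbb{R}^2)$, and hypothesis (ii) therefore forces the cubature identity (\ref{EtD}) for $\tilde{f}$. The task is then to verify that, for every fixed $M \in G$, neither the finite sum nor the integral in (\ref{EtD}) changes when $f$ is replaced by $f^M$; averaging the resulting $\# G$ identities over $M \in G$ will convert the identity for $\tilde f$ into the desired identity for $f$.

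For the finite-sum side I will use $G$-invariance of $X$: since every $M \in G \subset \mathrm{O}(Q)$ preserves the quadratic form $Q$ and hence the ellipse $C_D(r)$, the hypothesis $X \cdot {}^t M = X$ is meaningful, and a direct change of variables yields
\[
\sum_{(x,y) \in X} f^M(x,y) = \sum_{(x,y) \in X} f\bigl((x,y) \cdot {}^t M^{-1}\bigr) = \sum_{(x,y) \in X} f(x,y).
\]
For the integral side I will invoke the $\mathrm{O}(Q)$-invariance of the measure $d\tau$, proved in \cref{OQ-inv}, which immediately gives
\[
\int_{C_D(r)} f^M(x,y) \, d\tau(x,y) = \int_{C_D(r)} f(x,y) \, d\tau(x,y).
\]

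The main subtleties are purely notational: keeping the left-action convention $f \mapsto f^M$ compatible with the action of $G$ on points of the ellipse, and checking that the $D$-dependent normalizing constants in (\ref{EtD}) survive the averaging unchanged. I do not anticipate any genuinely new difficulty beyond the classical spherical case (\cref{Sobolevd=2}); the ellipsoidal setting differs only in that $\mathrm{O}(2)$ is replaced by $\mathrm{O}(Q)$, and both of the ingredients needed---$G$-invariance of $X$ and $\mathrm{O}(Q)$-invariance of $d\tau$---are already in place.
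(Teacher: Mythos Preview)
Your proposal is correct and is essentially the same argument as the paper's: both form the Reynolds average $\tilde f=\frac{1}{\#G}\sum_{M\in G}f^{M}$, apply hypothesis (ii) to it, and then use $G$-invariance of $X$ on the sum side and $\mathrm{O}(Q)$-invariance of $d\tau$ (\cref{OQ-inv}) on the integral side to pass from $\tilde f$ back to $f$. The paper merely packages these same steps as a single chain of equalities rather than stating the two invariance facts separately.
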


\begin{proof} 
Since (i) $\Rightarrow$ (ii) is clear, it is sufficient to prove (ii) $\Rightarrow$ (i).
Let $D \equiv 1, \; 2 \pmod{4}$.
Suppose that (ii) holds.
Then for all $f \in \mathcal{P}_n(\mathbb{R}^2)$, note that $(1/\#G)\sum_{M \in G} f^M(x,y)$ is $G$-invariant.
Therefore, we have
\begin{align*}
\frac{1}{2 \pi \sqrt{D}} \int_{C_D(r)} f(x,y) d \tau(x,y)
& = \frac{1}{\#G} \sum_{M \in G} \frac{1}{2 \pi \sqrt{D}} \int_{C_D(r)} f(x,y) d \tau(x,y) \\ 
&= \frac{1}{2 \pi \sqrt{D}} \int_{C_D(r)} \left(\frac{1}{\#G} \sum_{M \in G} f^{M}(x,y)\right) d \tau(x,y)\\
&= \frac{1}{\#X} \sum_{(x,y) \in X} \left(\frac{1}{\#G} \sum_{M \in G} f^{M}(x,y) \right)\\
&= \frac{1}{\#X} \sum_{M \in G} \left( \frac{1}{\#G} \sum_{(x,y) \in X} f^{M}(x,y) \right)\\
&= \frac{1}{\#X} \sum_{M \in G} \left( \frac{1}{\#G} \sum_{(x,y) \in X} f(x,y) \right)\\
&=\frac{1}{\#X} \sum_{(x,y) \in X} f(x,y).
\end{align*}
The second equality and the fifth equality follow by the $G$-invariance of the measure $d \tau$ and $X$ respectively. 
The third equality holds since $X$ is an ellipsoidal $n$-design.
This completes the proof.
The case $D \equiv 3 \pmod{4}$ is similar.
\end{proof} 

By using \cref{ellSobolev}, we can prove the following theorem:

\begin{theorem} \label{tight2m+1}
Let $C^{(Q)}_{2m+1}$ be the cyclic group generated by $A_{2 \pi/(2m+1)}$ in \cref{OQ-inv}, and $X = (x,y)^{C^{(Q)}_{2m+1}}$ be the orbit of a point $(x,y) \in C_D(r)$ by $C^{(Q)}_{2m+1}$.
Then $X$ is an ellipsoidal $2m$-design. 
\end{theorem}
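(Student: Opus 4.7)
The plan is to invoke the Generalized Sobolev Theorem (\cref{ellSobolev}) to reduce the claim to verifying the design identity (\ref{EtD}) only for polynomials in $\mathcal{P}_{2m}(\mathbb{R}^2)^G$, where $G := C^{(Q)}_{2m+1}$. Since $X$ is by construction a single $G$-orbit, it is automatically $G$-invariant, so \cref{ellSobolev} applies. The heart of the argument is therefore to show that every $G$-invariant polynomial of degree at most $2m$ is constant when restricted to $C_D(r)$; for constants the design identity holds trivially by the normalization of $d\tau$.

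To carry this out, I would first apply the harmonic decomposition of \cref{dec}: any polynomial $f$ of degree $\leq 2m$, restricted to $C_D(r)$, can be written uniquely as $\sum_{k=0}^{2m} h_k$ with $h_k \in H_{D,k}^{\mathbb{R}}[x,y]$ (where $H_{D,0}^{\mathbb{R}}$ denotes the constants). Because $G \subset \mathrm{O}(Q)$ preserves the quadratic form, it also preserves the Laplacian $\Delta_Q$, and hence, via the characterization in \cref{Lap}, each space $H_{D,k}^{\mathbb{R}}[x,y]$ is $G$-stable. Uniqueness of the decomposition then implies that $G$-invariance of $f$ is equivalent to $G$-invariance of each component $h_k$.

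The remaining task is to show that $H_{D,k}^{\mathbb{R}}[x,y]^{G} = 0$ for every $1 \leq k \leq 2m$. Here I would pass to the standard circle via the matrix $P$ from the proof of \cref{Lap}: a short computation shows that $P A_{2\pi/(2m+1)} P^{-1}$ equals the usual Euclidean rotation of angle $2\pi/(2m+1)$, while $H_{D,k}^{\mathbb{R}}[x,y]$ transports to the classical harmonic space $\langle \Re(x'+iy')^k, \im(x'+iy')^k \rangle_{\mathbb{R}}$. In these new coordinates the generator of $G$ acts on this two-dimensional space as the rotation of angle $2\pi k/(2m+1)$; since $1 \leq k \leq 2m$ forces $2\pi k/(2m+1)\notin 2\pi\mathbb{Z}$, this rotation fixes only the zero vector. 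Hence each $h_k$ vanishes for $1 \leq k \leq 2m$, so $f$ is constant on $C_D(r)$, as required.

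The main technical obstacle I foresee is the change-of-variables bookkeeping in the case $D \equiv 3 \pmod{4}$, where both $A_\phi$ and $P$ have substantially more cumbersome shapes. Verifying that $P A_{2\pi/(2m+1)} P^{-1}$ is indeed the standard rotation, and that the real and imaginary parts of $(x + y(1+\sqrt{-D})/2)^k$ transport to an honest real basis of the classical harmonic space of degree $k$, is a routine but error-prone calculation that underpins the entire argument; once it is in place, the eigenvalue argument above applies uniformly in $D$.
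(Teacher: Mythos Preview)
Your proposal is correct and follows essentially the same route as the paper: both invoke \cref{ellSobolev} to reduce to $G$-invariant polynomials and then show these are constant on $C_D(r)$. The only organizational difference is that the paper packages the invariant computation into a separate \cref{G-inv} (proved by conjugating via $P$ to the classical fact that the $C^{(E_2)}_{2m+1}$-invariants in degree $\le 2m$ are spanned by $(x'^2+y'^2)^j$), whereas you reach the same conclusion by decomposing via \cref{dec} and showing $H_{D,k}^{\mathbb{R}}[x,y]^G=0$ for $1\le k\le 2m$ through the rotation-eigenvalue argument; both arguments ultimately rest on the same conjugation $PA_\phi P^{-1}$ being the standard rotation, so the bookkeeping you flag for $D\equiv 3\pmod 4$ is already handled by the proofs of \cref{Lap} and \cref{OQ-inv}.
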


To prove this, we determine the basis of $\mathcal{P}_{2m}(\mathbb{R}^2)^G$.

\begin{lemma} \label{G-inv}
In the above setting, let $G = C^{(Q)}_{2m+1}$. Then
\begin{align*}
\mathcal{P}_{2m}(\mathbb{R}^2)^G=
\begin{cases}
 \langle (x^2+Dy^2)^j \mid 0 \leq j \leq m  \rangle_{\mathbb{R}} & (D \equiv 1, \; 2 \pmod{4}), \\
\left\langle \left(x^2+xy+\frac{1+D}{4}y^2 \right)^j \mid 0 \leq j \leq m  \right\rangle_{\mathbb{R}} & (D \equiv 3\pmod{4}).
\end{cases}
\end{align*}
\end{lemma}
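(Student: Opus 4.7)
The plan is to combine the direct-sum decomposition of \cref{dec} with an explicit description of the $G$-action on each homogeneous harmonic component. Write
\[
N(x,y) = \begin{cases} x^2+Dy^2 & (D\equiv 1,2\pmod 4),\\ x^2+xy+\frac{1+D}{4}y^2 & (D\equiv 3\pmod 4),\end{cases}
\]
so that \cref{dec} gives
\[
\mathcal{P}_{2m}(\mathbb{R}^2) \;=\; \bigoplus_{j=0}^{2m}\bigoplus_{k=0}^{\lfloor j/2\rfloor} N^{k}\,H_{D,j-2k}^{\mathbb{R}}[x,y].
\]
Since $G\subset \mathrm{O}(Q)$, the quadratic form $N$ is $G$-invariant; moreover $G$ preserves degree and, by \cref{Lap}, commutes with $\Delta_Q$, so each $H_{D,\ell}^{\mathbb{R}}[x,y]$ is a $G$-stable subspace. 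Hence taking $G$-invariants respects this decomposition, and the lemma reduces to computing $\bigl(H_{D,\ell}^{\mathbb{R}}[x,y]\bigr)^G$ for each $\ell$ with $0<\ell\le 2m$.

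Next I would diagonalize the $G$-action on $H_{D,\ell}^{\mathbb{R}}[x,y]$ by means of the linear change of coordinates $P$ introduced in the proof of \cref{Lap}. Setting $\binom{x'}{y'} = P\binom{x}{y}$, direct matrix computation shows that $PA_{2\pi/(2m+1)}P^{-1}$ is the standard Euclidean rotation by angle $2\pi/(2m+1)$ in the $(x',y')$-plane, and the defining generators of $H_{D,\ell}^{\mathbb{R}}[x,y]$ in \cref{HDjR} are precisely $\Re(x'+iy')^{\ell}$ and $\Im(x'+iy')^{\ell}$. Under the generator of $G$ we then have $(x'+iy')^{\ell}\mapsto e^{-i\ell\cdot 2\pi/(2m+1)}(x'+iy')^{\ell}$, so the complexified space $\mathbb{C}\otimes H_{D,\ell}^{\mathbb{R}}[x,y]$ splits as a direct sum of two one-dimensional eigenspaces whose eigenvalues are non-trivial $(2m+1)$-th roots of unity whenever $0<\ell\le 2m$. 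Therefore $\bigl(H_{D,\ell}^{\mathbb{R}}[x,y]\bigr)^G=\{0\}$ in that range, while obviously $\bigl(H_{D,0}^{\mathbb{R}}[x,y]\bigr)^G=\mathbb{R}$.

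Substituting back, the only summands of $\mathcal{P}_{2m}(\mathbb{R}^2)$ that contribute to the invariants are those with $j-2k=0$, i.e.\ $j=2k$ and $0\le k\le m$, yielding exactly the spanning set $\{N(x,y)^{k}\mid 0\le k\le m\}$ claimed in the lemma. Linear independence of these $m+1$ polynomials is immediate from their distinct degrees.

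The main obstacle I anticipate is the bookkeeping at the start of the second paragraph: one must verify carefully that the coordinate change $P$ simultaneously (i) conjugates the generator $A_{2\pi/(2m+1)}\in\mathrm{O}(Q)$ into the standard planar rotation, and (ii) matches $H_{D,\ell}^{\mathbb{R}}[x,y]$ with the classical space of real harmonic polynomials of degree $\ell$ spanned by $\Re(x'+iy')^{\ell}$ and $\Im(x'+iy')^{\ell}$. The case $D\equiv 3\pmod 4$ is the more delicate one because of the off-diagonal entries of $P$, but once the identification is established, the remainder is a routine character computation for the cyclic group $C_{2m+1}$ on its non-trivial two-dimensional real representations.
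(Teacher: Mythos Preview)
Your argument is correct. The paper takes a slightly different, shorter route: it applies the coordinate change $P$ at the very start, observing that $A_{2k\pi/(2m+1)}=P^{-1}B_{2k\pi/(2m+1)}P$ (with $B_\phi$ the standard rotation) identifies $\mathcal{P}_{2m}(\mathbb{R}^2)^G$ with $\mathcal{P}_{2m}(\mathbb{R}^2)^H$ for $H=C^{(E_2)}_{2m+1}$, then quotes the classical fact that the invariants of the standard cyclic rotation group of order $2m+1$ acting on polynomials of degree $\le 2m$ are spanned by $(x'^2+y'^2)^j$, and finally pulls back through $P$. You instead keep the harmonic decomposition of \cref{dec} in the foreground and only invoke $P$ to diagonalize the $G$-action on each $H_{D,\ell}^{\mathbb{R}}[x,y]$, replacing the quoted classical result by an explicit eigenvalue computation. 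Both proofs rest on the same conjugation identity (from \cref{OQ-inv}); your version is more self-contained, while the paper's is terser by outsourcing the $D=1$ case. The bookkeeping you flag as the main obstacle---that $P$ simultaneously conjugates $A_\phi$ to the Euclidean rotation and carries $H_{D,\ell}^{\mathbb{R}}[x,y]$ to $\langle \Re(x'+iy')^\ell,\Im(x'+iy')^\ell\rangle$---is indeed routine once one writes $x+y\sqrt{-D}=x'+iy'$ (resp.\ $x+y(1+\sqrt{-D})/2=x'+iy'$) in the two congruence cases.
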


\begin{proof} 
Suppose that $D \equiv 1, \; 2 \pmod{4}$. 
Let
\begin{align*}
B_{2k\pi/(2m+1)} =
\begin{pmatrix}
\cos \left(\frac{2k \pi}{2m+1} \right)& - \sin \left(\frac{2k \pi}{2m+1}\right)\\
\sin \left(\frac{2k \pi}{2m+1}\right) & \cos \left(\frac{2k \pi}{2m+1}\right)
\end{pmatrix}, &&
P =
\begin{pmatrix}
1 & 0\\
0 & \sqrt{D}
\end{pmatrix},
\end{align*}
and change variables as
\begin{align*}
\begin{pmatrix}
x' \\
y'
\end{pmatrix}
= P
\begin{pmatrix}
x \\
y
\end{pmatrix}.
\end{align*}
Let $H = C^{(E_2)}_{2m+1}$. Since $A_{2k\pi/(2m+1)}=P^{-1} B_{2k\pi/(2m+1)} P$ by the proof of \cref{OQ-inv}, we have
\begin{align*}
\mathcal{P}_{2m}(\mathbb{R}^2)^G
&=\{f \in \mathcal{P}_{2m} (\mathbb{R}^2) \mid f^{P^{-1} B_{2k\pi/(2m+1)} P} (x,y)=f(x,y)  \; \mbox{for all $k$}\}\\
&=\{f \in \mathcal{P}_{2m} (\mathbb{R}^2) \mid f^{B_{2k\pi/(2m+1)}} (x',y')=f(x',y')  \; \mbox{for all $k$}\}\\
&=\mathcal{P}_{2m}(\mathbb{R}^2)^H\\
&=\langle (x'^2+y'^2)^j \mid 0 \leq j \leq m  \rangle_{\mathbb{R}}\\
&=\langle (x^2+Dy^2)^j \mid 0 \leq j \leq m  \rangle_{\mathbb{R}}. 
\end{align*}
This completes the proof.
The case $D \equiv 3 \pmod{4}$ is similar. 
\end{proof}

\begin{proof} [{Proof of \cref{tight2m+1}}]
Suppose that $D \equiv 1, \; 2 \pmod{4}$. 
Let $G = C^{(Q)}_{2m+1}$.
Since $X$ is $G$-invariant, by \cref{ellSobolev}, it suffices to prove
\begin{align} \label{inv}
 \frac{1}{2 \pi \sqrt{D}} \int_{C_D(r)} f(x,y) d \tau(x,y)=\frac{1}{2m+1} \sum_{(x,y) \in X} f(x,y)
 \end{align}
for all $f \in \mathcal{P}_{2m}(\mathbb{R}^2)^G$. 
By \cref{G-inv}, for all $j$, the both sides of (\ref{inv}) for $f(x,y)=(x^2+Dy^2)^j $ are $r^j$.
This completes the proof.
The case $D \equiv 3 \pmod{4}$ is similar. 
\end{proof}

\section{Stroud-type bound and rational tight designs} \label{Stroud Bound}

In this section, we present an arithmetic proof of the Stroud-type bound for ellipsoidal designs (see Pandey \cite[p.\ 1247]{Pandey2022}), which is an extension of the Stroud-type bound for classical spherical designs (\cite[p.\ 364]{DGS1977}). 
Moreover, we establish a classification theorem for tight designs,
and then prove that a rational tight design on an ellipse $C_D(r)$ exists only if $D \in \{1,3 \}$.

\begin{theorem} [{\cite[p.\ 1247 Remark (2)]{Pandey2022}, Stroud-type bound}] \label{ESB}
If there exists an ellipsoidal $n$-design with $m$ points, then $m \geq n+1$.
\end{theorem}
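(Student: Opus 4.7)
My approach is arithmetic: I translate the $n$-design condition into a single vanishing complex power-sum identity, and then invoke Newton's identities as the ``fundamental inequality'' on the $\PTE$ side.

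First, I would associate to each point $(x_i,y_i) \in X$ the complex number
\[
 \alpha_i := \begin{cases} x_i + y_i\sqrt{-D} & \text{if } D \equiv 1,2 \pmod 4, \\[2pt] x_i + y_i\cdot\dfrac{1+\sqrt{-D}}{2} & \text{if } D \equiv 3 \pmod 4. \end{cases}
\]
A direct norm computation (as in the remark following the definition of $C_D(r)$) gives $|\alpha_i|^2 = \alpha_i\bar{\alpha}_i = r$, so each $\alpha_i \neq 0$. By \cref{HDjR} we have
\[
 H_{D,j}^{\mathbb{R}}[x,y] = \langle\Re(\alpha^j),\,\Im(\alpha^j)\rangle_{\mathbb{R}},
\]
so applying \cref{HDj} to both real and imaginary parts collapses the $n$-design condition to the single complex identity
\[
 \sum_{i=1}^m \alpha_i^{\,j} = 0 \qquad (1 \le j \le n).
\]
The $\mathrm{O}(Q)$-invariance of $d\tau$ enters implicitly here, through \cref{HDj}.

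Second, I would apply Newton's identities. Assume $m \le n$ for contradiction. Then the power sums $p_j := \sum_i \alpha_i^{\,j}$ vanish for $1 \le j \le m$, and Newton's recursion expresses each elementary symmetric function $e_j(\alpha_1,\dots,\alpha_m)$ as a universal polynomial in $p_1,\dots,p_j$. Hence $e_1 = \dots = e_m = 0$, which forces
\[
 \prod_{i=1}^{m} (z - \alpha_i) = z^m,
\]
and therefore $\alpha_1 = \dots = \alpha_m = 0$. This contradicts $|\alpha_i|^2 = r > 0$. The conclusion $m \ge n+1$ follows.

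The step I expect to be the main obstacle is essentially none: once the ellipsoidal harmonics are linearized through the imaginary quadratic variable $\alpha$, the Newton-identity argument is immediate and uniform in $D$. The ``fundamental inequality for $\PTE_2$'' advertised in the introduction is precisely this Newton-type statement transported via the norm form of $\mathcal{O}_K$: two multisets in $\mathbb{Z}^2$ whose associated $\alpha$-sequences have matching power sums up to degree $n$ must coincide when their common size is at most $n$. Here we apply the statement to the single multiset $\{\alpha_i\}$, with the nonvanishing constraint $\alpha_i \neq 0$ supplied by the positivity of the ``norm'' $r$.
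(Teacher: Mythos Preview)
Your proof is correct, but it takes a genuinely different route from the paper's.

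The paper's argument is deliberately \emph{external}: it uses the $\mathrm{O}(Q)$-invariance of $d\tau$ (\cref{OQ-inv}) to produce, from a single design $X$, a disjoint translate $M^{-1}X$ that is also an $n$-design; the resulting pair is then a bona fide solution of $\PTE_2$ of size $m$ and degree $n$, and the bound $m\ge n+1$ is read off from the PTE inequality of \cref{PTEineq}(ii). Your argument is \emph{internal}: you pass through \cref{HDj} to the complex power-sum condition $\sum_i\alpha_i^j=0$ (which is exactly \cref{elleq}(ii) later in the paper) and finish with Newton's identities directly, never invoking the group action or a second multiset. In effect you have inlined the one-variable Newton argument that underlies \cref{PTEineq}(i), applied to a single multiset with the nonvanishing constraint $|\alpha_i|^2=r>0$.

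Two small points of framing. First, your remark that ``the $\mathrm{O}(Q)$-invariance of $d\tau$ enters implicitly through \cref{HDj}'' is not really accurate: \cref{HDj} rests on the harmonic decomposition of \cref{dec}, not on invariance, and your proof nowhere needs \cref{OQ-inv}. Second, your final paragraph identifying this with ``the fundamental inequality for $\PTE_2$'' is a stretch---what you actually used is the one-dimensional Newton mechanism over $\mathbb{C}$, not the genuinely two-dimensional bound of \cref{PTEineq}(ii). Neither point affects correctness. What your approach buys is brevity and self-containment; what the paper's approach buys is an explicit demonstration of the design--$\PTE_2$ bridge, which is the thematic point of \S\ref{Stroud Bound}. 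It is worth noting that the paper itself uses exactly your complex/Newton viewpoint a few lines later, in \cref{elleq} and the proof of \cref{tight}(i).
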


By using the invariance of the measure $d\tau$ on $C_D(r)$ and the following inequality for the PTE problem,  we give an arithmetic proof.

\begin{theorem} [{PTE bound}] \label{PTEineq}
\hangindent\leftmargini
\textup{(i)}\hskip\labelsep 
$($Cf.\ \cite[Chapter 11, Theorem 1]{Borwein2002}, \cite[Theorem 4.1.7]{Ghiglione}$)$
If
 \[
[a_1, \ldots, a_m ] =_n  [b_1, \ldots, b_m]
 \]
is a solution of $\PTE_1$, then $m \geq n+1$.
\begin{enumerate}
\setcounter{enumi}{1}
\item[(ii)] $($\cite[Theorem 4.1.9]{Ghiglione}$)$ 
 If
 \[
[(a_{11}, \ldots a_{1r}), \ldots, (a_{m1}, \ldots a_{mr})] =_n [(b_{11}, \ldots b_{1r}), \ldots, (b_{m1}, \ldots b_{mr})]
 \]
is a solution of the $r$-dimensional PTE problem $($see \cref{prob:AT1}$)$, then $m \geq n+1$.
\end{enumerate}
\end{theorem}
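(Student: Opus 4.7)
The plan is to deduce both parts from the classical Newton-identity argument in dimension one, then promote it to dimension $r$ via a generic integer linear projection. For (i), I would form the monic polynomials $f_a(X)=\prod_{i=1}^m(X-a_i)$ and $f_b(X)=\prod_{i=1}^m(X-b_i)$ in $\mathbb{Z}[X]$ and exploit that Newton's identities express each elementary symmetric function $e_k(a)$ as a polynomial in the power sums $p_1(a),\ldots,p_k(a)$. If $m\leq n$, then the PTE hypotheses give $p_k(a)=p_k(b)$ for $k=1,\ldots,m$, hence $e_k(a)=e_k(b)$ for all such $k$, hence $f_a=f_b$, and therefore $\{a_1,\ldots,a_m\}=\{b_1,\ldots,b_m\}$ as multisets, contradicting disjointness. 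Hence $m\geq n+1$.

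For (ii), I would reduce to (i) via a linear form $\ell(X_1,\ldots,X_r)=c_1X_1+\cdots+c_rX_r$ with $c\in\mathbb{Z}^r$. Since $\ell(x)^k$ is a polynomial of total degree $k$ in $x_1,\ldots,x_r$, the $r$-dimensional PTE identities immediately yield $\sum_{i=1}^m\ell(a_i)^k=\sum_{i=1}^m\ell(b_i)^k$ for every $k=1,\ldots,n$ (by expanding $(c\cdot x)^k$ into monomials via the multinomial theorem and summing termwise). Thus the pair of multisets $\{\ell(a_i)\}_i,\{\ell(b_i)\}_i\subset\mathbb{Z}$ satisfies the hypotheses of $\PTE_1$ of degree $n$ and size $m$, provided they remain disjoint; invoking (i) then yields $m\geq n+1$.

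The principal (and only nontrivial) point is choosing $c$ so that disjointness is preserved after projection, i.e., so that $c\cdot(a_i-b_j)\neq 0$ for every $i,j\in\{1,\ldots,m\}$. Since the two original multisets are disjoint, each difference $a_i-b_j$ is a nonzero vector in $\mathbb{Z}^r$; for each pair $(i,j)$ the forbidden $c$ lie on a single hyperplane in $\mathbb{R}^r$, and the $m^2$ such hyperplanes do not cover $\mathbb{Z}^r$. Concretely, $c=(1,N,N^2,\ldots,N^{r-1})$ with $N$ any integer strictly greater than $\max_{i,j,s}|a_{is}-b_{js}|$ works, since $c\cdot(a_i-b_j)=0$ would express $0$ as a nontrivial base-$N$ combination with digits of absolute value less than $N$, which is impossible. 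This projection step is the main obstacle I anticipate; everything else is direct application of Newton's identities and the multinomial expansion.
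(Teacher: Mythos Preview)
The paper does not supply its own proof of this theorem; it is quoted with references to \cite{Borwein2002} and \cite{Ghiglione} and then used as a black box in the proof of \cref{ESB}. Your argument is the standard one and is correct: part (i) is exactly the Newton-identity proof given in Borwein's book, and your reduction in part (ii) via a generic integer linear form is a clean way to pull the $r$-dimensional statement down to dimension one.

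One minor caveat worth noting: your hyperplane-avoidance step assumes that every difference $a_i-b_j$ is nonzero, i.e., that ``disjoint'' means the two multisets share no element. If one reads ``disjoint'' only as ``unequal as multisets,'' then some $a_i-b_j$ could vanish and your explicit $c$ might collapse those points. The fix is immediate and uses the same idea: choose $c=(1,N,\ldots,N^{r-1})$ with $N$ larger than all coordinate differences among the $a_i$ and $b_j$ taken together, so that $c$ separates every pair of \emph{distinct} points in $\{a_i\}\cup\{b_j\}$; then the projection preserves multiplicities and hence preserves inequality of the two multisets. Either way, the argument goes through.
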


\begin{proof} [{Proof of \cref{ESB}}] 
Let $\#X=m$ and
\[
 \frac{1}{m}\sum_{(x,y) \in X} f(x,y)= \int_{C_D(r)} f(x,y) d \tau(x,y)
\]
be an ellipsoidal $n$-design.
 Then by the invariance of the measure, we have
\[
\frac{1}{m} \sum_{(x',y') \in M^{-1}X} f(x',y') = \int_{C_D(r)} f(x,y) d \tau(x,y)
\]
for all $M \in \mathrm{O}(Q)$. 
Since $\mathrm{O}(Q)$ is an infinite group acting transitively on $C_D(r)$, we can take $M \in \mathrm{O}(Q)$ such that
$X$ and $M^{-1}X$ are disjoint if necessary. Therefore,
\[
\sum_{(x,y) \in X} f(x,y) = \sum_{(x',y') \in M^{-1}X} f(x',y')
\]
is a solution of $\PTE_2$. 
Therefore, we obtain $m \geq n+1$ by \cref{PTEineq} (ii). 
\end{proof}

\begin{definition} [{Tight desgin}] \label{def:tight1}
An ellipsoidal $n$-design with $n+1$ points is called a {\it tight $n$-design}.
\end{definition}

Next, we prove a classification theorem for tight ellipsoidal $n$-designs;
the proof of \cref{tight} (i) is similar to that of the main theorem of \cite{Hong}.

\begin{theorem} \label{tight} 
The following hold:
\begin{enumerate}
\item[(i)]  Let $X \subset C_D(r)$ be a tight $n$-design. 
Then there exists some $(x,y) \in C_D(r)$ such that $X=(x,y)^{C^{(Q)}_{2m+1}}$.
\item[(ii)]  A rational tight $n$-design on an ellipse $C_D(r)$ exists only if $D \in \{1,3 \}$. 
\end{enumerate}
\end{theorem}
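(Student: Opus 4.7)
The plan is to work throughout in the complex coordinate
\[
z = \begin{cases} x + y\sqrt{-D} & (D \equiv 1, 2 \pmod 4), \\ x + y\,\frac{1+\sqrt{-D}}{2} & (D \equiv 3 \pmod 4), \end{cases}
\]
chosen so that $|z|^2$ coincides with the defining quadratic form of $C_D(r)$, thereby identifying $C_D(r)$ with the circle $\{|z|=\sqrt{r}\}\subset\mathbb{C}$. By \cref{HDjR}, the space $H_{D,j}^{\mathbb{R}}[x,y]$ is spanned by $\Re z^j$ and $\im z^j$, so \cref{HDj} translates the ellipsoidal $n$-design condition on $X=\{P_1,\ldots,P_{n+1}\}$ into the power-sum identities
\[
\sum_{k=1}^{n+1} z_k^j = 0 \qquad (1 \leq j \leq n),
\]
where $z_k\in\mathbb{C}$ is the complex coordinate of $P_k$.

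For (i), Newton's identities turn the vanishing power sums into the vanishing of the elementary symmetric polynomials $e_1=\cdots=e_n=0$ in $z_1,\ldots,z_{n+1}$, so
\[
\prod_{k=1}^{n+1}(w-z_k) = w^{n+1} + (-1)^{n+1} e_{n+1}.
\]
Consequently all $z_k^{n+1}$ equal a common constant $c$; since $|z_k|=\sqrt{r}>0$ we have $c\neq 0$, so the $n+1$ distinct values $z_k$ must enumerate the $(n+1)$-st roots of $c$. After relabeling, $z_k = z_1\omega^{k-1}$ with $\omega=e^{2\pi i/(n+1)}$. A short verification---immediate for $D\equiv 1,2\pmod 4$ using $\sqrt{-D}/\sqrt{D}=i$, and a brief matrix calculation in the Eisenstein-type coordinate for $D\equiv 3\pmod 4$---shows that the generator $A_{2\pi/(n+1)}$ from \cref{OQ-inv} acts on $z$ as multiplication by $\omega$. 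Hence $X$ is the orbit $P_1^{C^{(Q)}_{n+1}}$, which is the stated form with $n+1=2m+1$.

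For (ii), suppose $X$ is rational, so that each $(x_k,y_k)\in\mathbb{Q}^2$ and consequently each $z_k$ lies in $K:=\mathbb{Q}(\sqrt{-D})$. Since $z_1\neq 0$, the ratio $\omega=z_2/z_1$ also lies in $K$. But $\omega$ is a primitive $(n+1)$-st root of unity, so $\mathbb{Q}(\zeta_{n+1})\subseteq K$, forcing $[\mathbb{Q}(\zeta_{n+1}):\mathbb{Q}]=\varphi(n+1)\leq 2$ and hence $n+1\in\{1,2,3,4,6\}$. Among these, the primitive roots $\zeta_3,\zeta_6$ lie only in $\mathbb{Q}(\sqrt{-3})$, forcing $D=3$, while $\zeta_4=i$ lies only in $\mathbb{Q}(\sqrt{-1})$, forcing $D=1$; the degenerate values $n+1\in\{1,2\}$ place no constraint on $D$. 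This yields $D\in\{1,3\}$ as required. The main obstacle I anticipate is the verification that $A_{2\pi/(n+1)}$ acts as scalar multiplication by $\omega$ on the complex coordinate (essentially forced by the construction of $\mathrm{O}(Q)$ in \cref{OQ-inv}, but requiring care in the $D\equiv 3\pmod 4$ case); once that is in hand, both parts of the theorem reduce to Newton's identities together with elementary cyclotomic field theory.
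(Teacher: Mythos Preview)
Your argument for (i) is essentially the paper's: both pass to the complex coordinate $z$, use \cref{HDj} to rewrite the design condition as $\sum z_k^j=0$ for $1\le j\le n$, apply Newton's identities to get $e_1=\cdots=e_n=0$, and conclude that the $z_k$ are the full set of $(n{+}1)$-st roots of a nonzero constant, hence a single $C^{(Q)}_{n+1}$-orbit. The paper additionally normalizes by a phase so that $\prod z_k$ becomes $\sqrt{r}^{\,n+1}$ and splits into odd/even cases, but this is cosmetic; the fact that $A_\phi$ acts as multiplication by $e^{i\phi}$ in the $z$-coordinate is exactly the content of \cref{1to1}.

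For (ii) your route is genuinely different. The paper first shows (\cref{ratOQ}) that rationality of $X$ forces $A_{2\pi/(n+1)}\in M_2(\mathbb{Q})$, hence $\cos\tfrac{2\pi}{n+1}\in\mathbb{Q}$ and $\sqrt{D}\sin\tfrac{2\pi}{n+1}\in\mathbb{Q}$, and then invokes Niven's theorem (\cref{Niven}) to restrict $\cos\tfrac{2\pi}{n+1}$ to $\{0,\pm\tfrac12,\pm1\}$; a short case analysis pins down $D$. You instead note that the ratios $z_j/z_1$ enumerate all $(n{+}1)$-st roots of unity and lie in $K=\mathbb{Q}(\sqrt{-D})$, so $\mathbb{Q}(\zeta_{n+1})\subseteq K$ forces $\varphi(n{+}1)\le 2$, i.e.\ $n{+}1\in\{2,3,4,6\}$, after which the value of $D$ is read off from which imaginary quadratic field contains $\zeta_{n+1}$. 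This is cleaner, avoids the external input of Niven's theorem, and yields the constraint on $n$ as a by-product. One caveat applying to both proofs: for $n=1$ the antipodal pair $\{(x,y),(-x,-y)\}$ is a rational tight $1$-design for any $D$ admitting a rational point, so the conclusion $D\in\{1,3\}$ should be read as holding for $n\ge 2$; your remark that $n{+}1\in\{1,2\}$ ``place no constraint on $D$'' flags this correctly, but the subsequent sentence ``This yields $D\in\{1,3\}$ as required'' should be qualified accordingly.
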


First, we prove \cref{tight} (i). In doing so, we prepare some auxiliary lemmas.
Let
\[
(x',y') =
\begin{cases}
(x,\sqrt{D}y) & (D \equiv 1, \; 2 \pmod{4}), \\
(x+y/2,\sqrt{D}y/2) & (D \equiv 3 \pmod{4}).
\end{cases}
\]
Then $C_D(r)$ and $\{ z \in \mathbb{C} \mid |z| = \sqrt{r} \}$ are bijective by
\begin{align} \label{bij}
(x,y) \mapsto x'+y'\sqrt{-1}. 
\end{align}
With this identification, we obtain the following:

\begin{lemma} [{Cf.\ \cite[Lemma 1]{Hong}}] \label{elleq} 
Let $X = \{ \xi_1 , \ldots, \xi_m \} \subset C_D(r)$.
Then the following are equivalent.
\begin{enumerate}
\item[(i)] $X$ is an ellipsoidal $n$-design.
\item[(ii)] For all $1 \leq k \leq n$, it holds that
\[\sum_{i=1}^m \xi_i^k=0. \]
\item[(iii)] For all $1 \leq k \leq n$, it holds that
\[\sum_{1 \leq i_1 <i_2<\cdots < i_k\leq m} \xi_{i_1} \xi_{i_2} \cdots \xi_{i_k}=0. \]
\end{enumerate}
\end{lemma}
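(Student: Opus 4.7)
The plan is to leverage \cref{HDj} to convert condition (i) into a vanishing statement on the space $H_{D,j}^{\mathbb{R}}[x,y]$, exploit the bijection (\ref{bij}) to identify this space with the span of $\mathrm{Re}(\xi^j)$ and $\mathrm{Im}(\xi^j)$, thereby getting (i) $\Leftrightarrow$ (ii), and then handle (ii) $\Leftrightarrow$ (iii) via Newton's identities.

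First I would check that the bijection (\ref{bij}) intertwines the generators of $H_{D,j}^{\mathbb{R}}[x,y]$ with $\xi^j$. For $D \equiv 1, 2 \pmod{4}$, the identity $x+y\sqrt{-D} = x + (\sqrt{D}y)\sqrt{-1} = x' + y'\sqrt{-1}$ gives $(x+y\sqrt{-D})^j = \xi^j$ as a function on $C_D(r)$. For $D \equiv 3 \pmod{4}$, the identity $x + y(1+\sqrt{-D})/2 = (x+y/2) + (\sqrt{D}y/2)\sqrt{-1} = x' + y'\sqrt{-1}$ yields the analogous statement. Consequently, $H_{D,j}^{\mathbb{R}}[x,y] = \langle \mathrm{Re}(\xi^j), \mathrm{Im}(\xi^j)\rangle_{\mathbb{R}}$ under (\ref{bij}).

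To conclude (i) $\Leftrightarrow$ (ii), I invoke \cref{HDj}, which asserts that $X$ is an ellipsoidal $n$-design iff $\sum_{(x,y) \in X} f(x,y) = 0$ for every $f \in H_{D,j}^{\mathbb{R}}[x,y]$ with $1 \leq j \leq n$. By the identification above, this is equivalent to $\sum_{i=1}^m \mathrm{Re}(\xi_i^k) = 0$ and $\sum_{i=1}^m \mathrm{Im}(\xi_i^k) = 0$ for all $1 \leq k \leq n$, which in turn is equivalent to $\sum_{i=1}^m \xi_i^k = 0$ in $\mathbb{C}$.

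For (ii) $\Leftrightarrow$ (iii), I would apply Newton's identities relating the power sums $p_k = \sum_{i=1}^m \xi_i^k$ to the elementary symmetric polynomials $e_k = \sum_{1 \leq i_1 < \cdots < i_k \leq m} \xi_{i_1} \cdots \xi_{i_k}$, namely
\[
k e_k = \sum_{j=1}^{k} (-1)^{j-1} e_{k-j} p_j \qquad (1 \leq k \leq m).
\]
A straightforward induction on $k$ shows that $p_1 = \cdots = p_n = 0$ is equivalent to $e_1 = \cdots = e_n = 0$. There is no serious obstacle in this argument: the only point requiring care is the correct matching of the spaces $H_{D,j}^{\mathbb{R}}[x,y]$ with $\langle \mathrm{Re}(\xi^j), \mathrm{Im}(\xi^j)\rangle_{\mathbb{R}}$ in the $D \equiv 3 \pmod{4}$ case, which is settled by the explicit calculation above.
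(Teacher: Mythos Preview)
Your proposal is correct and follows essentially the same approach as the paper: the paper's proof is the terse two-liner ``By \cref{HDj}, (i) and (ii) are equivalent. By Newton's identity, (ii) and (iii) are equivalent,'' and you have simply unpacked both steps, making explicit how the bijection (\ref{bij}) identifies the generators of $H_{D,j}^{\mathbb{R}}[x,y]$ with $\mathrm{Re}(\xi^j),\mathrm{Im}(\xi^j)$ and writing out the Newton recursion.
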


\begin{proof}
By \cref{HDj}, (i) and (ii) are equivalent.
By Newton's identity, (ii) and (iii) are equivalent.
\end{proof}
For $1 \leq k \leq m$, we define $\xi_k^* = \xi_k e^{-\sqrt{-1} \theta}$, where $\theta$ is chosen so that
\begin{align*}
\sqrt{r}^m e^{\sqrt{-1} m\theta}= \xi_1 \cdots \xi_m \;\; (0 \leq m \theta< 2 \pi).
\end{align*}

\begin{remark} \label{1to1} 
The multiplication by $e^{\sqrt{-1} \theta}$ in $\mathbb{C}$ corresponds to the action of matrix $A_{\theta}$ in \cref{OQ-inv}.
\begin{enumerate}
\item[(i)] Suppose that $D \equiv 1, \; 2 \pmod{4}$. Let $(x_k,y_k) \in C_D(r)$ and $\xi_k^* = x_k+y_k\sqrt{-D}$.
Then
\begin{align*}
A_{\theta} \begin{pmatrix}
x_k \\
y_k 
\end{pmatrix}
=\begin{pmatrix}
x_k \cos \theta -y_k\sqrt{D} \sin \theta\\
x_k \frac{\sin \theta}{\sqrt{D}}+y_k \cos \theta
\end{pmatrix}
\end{align*}
corresponds to $\xi_k^*e^{\sqrt{-1} \theta}=\xi_k$.

\item[(ii)] Suppose that $D \equiv 3 \pmod{4}$. Let $(x_k,y_k) \in C_D(r)$ and $\xi_k^*
=x_k+y_k(1+\sqrt{-D})/2$.
Then
\begin{align*}
A_{\phi}
\begin{pmatrix}
x_k\\
y_k
\end{pmatrix}
=\begin{pmatrix}
x_k \left(\cos \theta-\frac{\sin \theta}{\sqrt{D}} \right) -y_k\frac{(D+1)}{2\sqrt{D}} \sin \theta\\
x_k \frac{2\sin \theta}{\sqrt{D}} +y_k \left(\cos \phi+\frac{\sin \phi}{\sqrt{D}} \right) 
\end{pmatrix}
\end{align*}
corresponds to $\xi_k^*e^{\sqrt{-1} \theta}=\xi_k$.
\end{enumerate}
\end{remark}

In the following setting, we define 
\[g(z)=\prod_{k=1}^m (z-\xi_k^*) \]
and write
\[g(z)=\sum_{k=0}^m (-1)^k a_{m-k}z^{m-k}.\]
Then by \cref{elleq}, $X$ is an ellipsoidal $n$-design if and only if $a_i=0$ for all $m-n \leq i \leq m-1$.

\begin{lemma}  [{Cf.\ \cite[Lemma 2]{Hong}}]  \label{ai} 
The following hold:
\begin{enumerate} 
\item[(i)] $a_0=\sqrt{r}^m$.
\item[(ii)] $a_{m-k}=r^{-m/2+k} \overline{a_k}$ for all $0 \leq k \leq m$.
\item[(iii)] The following conditions are equivalent:
\begin{enumerate}
\item $X$ is an ellipsoidal $n$-design.
\item $a_{m-1}=\cdots=a_{m-n}=0$.
\item $a_1=\cdots=a_n=0$.
\end{enumerate}
\end{enumerate}
\end{lemma}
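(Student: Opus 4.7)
The plan is to identify $a_{m-k}$ with the $k$-th elementary symmetric polynomial $e_k(\xi_1^*, \ldots, \xi_m^*)$, after which all three parts follow from bookkeeping about $|\xi_i^*|=\sqrt{r}$ and the normalizing choice of $\theta$. Expanding $g(z)=\prod_{k=1}^m(z-\xi_k^*)$ as $\sum_{k=0}^m(-1)^k e_k(\xi_1^*,\ldots,\xi_m^*) z^{m-k}$ and comparing with the given expansion $g(z)=\sum_{k=0}^m(-1)^k a_{m-k} z^{m-k}$ gives
\[
a_{m-k}=e_k(\xi_1^*,\ldots,\xi_m^*)\qquad (0\le k\le m).
\]
This is the one identification that drives everything.

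For (i), I would specialize to $k=m$: $a_0=e_m(\xi_1^*,\ldots,\xi_m^*)=\prod_{i=1}^m \xi_i^*=e^{-\sqrt{-1}m\theta}\prod_{i=1}^m \xi_i=e^{-\sqrt{-1}m\theta}\cdot\sqrt{r}^m e^{\sqrt{-1}m\theta}=\sqrt{r}^m$, where the third equality is the defining property of $\theta$.

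For (ii), I would exploit $|\xi_i^*|=\sqrt{r}$, which gives $\overline{\xi_i^*}=r/\xi_i^*$. Then
\[
\overline{a_k}=\overline{e_{m-k}(\xi_1^*,\ldots,\xi_m^*)}=e_{m-k}\!\left(\tfrac{r}{\xi_1^*},\ldots,\tfrac{r}{\xi_m^*}\right)=r^{m-k}\!\!\sum_{|S|=m-k}\prod_{i\in S}\frac{1}{\xi_i^*}.
\]
Factoring out $\prod_i \xi_i^*=\sqrt{r}^{\,m}$ (by (i)) turns the inner sum into $e_k(\xi_1^*,\ldots,\xi_m^*)/\sqrt{r}^{\,m}=a_{m-k}/\sqrt{r}^{\,m}$, yielding $\overline{a_k}=r^{m/2-k}a_{m-k}$, equivalently (ii). The small thing to be careful about is that $a_{m-k}$ turns out to be real after this computation when relevant, but we do not need that---only the stated identity.

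For (iii), I would combine Cref \cref{elleq}(iii) with the scaling $\xi_i=\xi_i^*e^{\sqrt{-1}\theta}$, which gives $e_k(\xi_1,\ldots,\xi_m)=e^{\sqrt{-1}k\theta}\,e_k(\xi_1^*,\ldots,\xi_m^*)=e^{\sqrt{-1}k\theta}\,a_{m-k}$. Hence the $e_k$-vanishing characterization of ellipsoidal designs translates into (b) $a_{m-1}=\cdots=a_{m-n}=0$, proving (a)$\Leftrightarrow$(b). Finally, part (ii) shows $a_{m-k}=0\Leftrightarrow a_k=0$ (since $r^{-m/2+k}\ne 0$ and conjugation kills only zero), so (b)$\Leftrightarrow$(c). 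There is no genuine obstacle here---the only ``trick'' is the reciprocity $\overline{\xi_i^*}=r/\xi_i^*$ used in (ii), and the rest is symmetric-function bookkeeping that follows the outline of \cite[Lemma 2]{Hong} almost verbatim.
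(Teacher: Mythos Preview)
Your proof is correct and follows essentially the same route as the paper: both identify $a_{m-k}$ with $e_k(\xi_1^*,\ldots,\xi_m^*)$, compute $a_0$ from the defining choice of $\theta$, prove the reciprocity (ii) via $\overline{\xi_i^*}=r/\xi_i^*$ and the complement-of-subsets trick, and deduce (iii) from \cref{elleq} together with (ii). Your treatment of (iii) is slightly more explicit in noting $e_k(\xi_1,\ldots,\xi_m)=e^{\sqrt{-1}k\theta}a_{m-k}$, but this is exactly the content of the sentence preceding the lemma in the paper.
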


\begin{proof}
\hangindent\leftmargini
\textup{(i)}\hskip\labelsep 
By definition, $a_0=\xi_1^* \cdots \xi_m^*=\xi_1 \cdots \xi_m e^{- \sqrt{-1}m\theta}=\sqrt{r}^m$.
\begin{enumerate}
\setcounter{enumi}{1}
\item[(ii)] By (i) and $|\xi_i^*|^2=|\xi_i|^2=r$, we have
\begin{align*}
a_{m-k} &=\sum_{1 \leq i_1<i_2< \cdots< i_k \leq m} \xi_{i_1}^* \xi_{i_2}^* \cdots  \xi_{i_k}^*\\
&=\sum_{1 \leq i_1<i_2< \cdots< i_k \leq m} 
\frac{\sqrt{r}^m \xi_{i_1}^* \xi_{i_2}^* \cdots  \xi_{i_k}^*}{\xi_1^* \xi_2^* \cdots  \xi_m^*}\\
&=\sum_{1 \leq j_1<j_2< \cdots< i_{m-k} \leq m} 
\frac{\sqrt{r}^m}{\xi_{j_1}^* \xi_{j_2}^* \cdots  \xi_{j_{m-k}}^*}\\
&= \sum_{1 \leq j_1<j_2< \cdots< i_{m-k} \leq m} 
\frac{\sqrt{r}^m}{r^{m-k}} \overline{\xi_{j_1}^* \xi_{j_2}^* \cdots  \xi_{j_{m-k}}^*}\\
&=r^{-\frac{m}{2}+k} \sum_{1 \leq j_1<j_2< \cdots< i_{m-k} \leq m} 
\overline{\xi_{j_1}^* \xi_{j_2}^* \cdots  \xi_{j_{m-k}}^*}\\
&=r^{-\frac{m}{2}+k} \overline{a_k}.
\end{align*}
\item[(iii)] This follows from (ii) and 
\cref{elleq} (i). 
\qedhere
\end{enumerate}
\end{proof}

Now we prove a stronger version of \cref{tight}. 

\begin{proof}[{Proof of \cref{tight} $(i)$}]   
Let $X=\{\xi_1 , \ldots, \xi_m \} \subset C_D(r)$ be an $n$-design with $m \leq 2n+1$.
 Then we prove that $X \subset (x,y)^{C^{(Q)}_{2m+1}}$ for some $(x,y) \in C_D(r)$.
 
With $g$ defined above, we have
 \[
     g(z)=(z-\xi_1^*) \cdots (z-\xi_m^*)=z^m+(-1)^m \sqrt{r}^m
\]
 since $a_0=\sqrt{r}^m$ and $a_1=\cdots=a_{m-1}=0$ by \cref{ai}.
 
Suppose that $m$ is odd. Since $g(\xi_k^*)=0$, we have $\xi_k^{*m}=\sqrt{r}^m$.
Therefore, $\xi_k^*=\sqrt{r} e^{2l \pi \sqrt{-1}/m}$ for some $l$.
Thus, $\xi_k=\sqrt{r} e^{(2l \pi/m +\theta) \sqrt{-1}}$.
Hence $\xi_k$ is a vertex of the regular $m$-gon inscribed in the circle $\{ z \in \mathbb{C} \mid |z|=r \}$.
By pulling it back to $C_D(r)$ via (\ref{bij}), we obtain the assertion.
 
Suppose that $m$ is even.
Then $\xi_k^{*m}+\sqrt{r}^m=0$.
Therefore, $\xi_k^*=\sqrt{r}e^{(2l+1)\pi \sqrt{-1}/m}$ for some $l$.
Hence $\xi_k=\xi_k^* e^{\sqrt{-1} \theta}$ is a vertex of the regular $m$-gon inscribed in the circle $\{ z \in \mathbb{C} \mid |z|=r \}$. 
By pulling it back to $C_D(r)$ via (\ref{bij}), we obtain the assertion. 
\end{proof}

\begin{example} \label{RD}

Pandey \cite[p.\ 1247 Remark]{Pandey2022} remarks that there is a one-to-one correspondence between spherical $n$-designs and ellipsoidal $n$-designs. 
However, the rationality of design is not preserved under the linear transformation $P=\begin{pmatrix} 
1 & \frac{1}{2}\\
0 & \frac{\sqrt{3}}{2}
\end{pmatrix}$
 since $P \not\in M_2(\mathbb{Q})$. For example,
 the tight $5$-design
\[ X:=\left\{\pm \left(\frac{2t+1}{t^2+t+1},\frac{(t^2-1)}{t^2+t+1}  \right),
 \pm \left(\frac{(t^2-1)}{t^2+t+1},-\frac{t(t+2)}{t^2+t+1} \right),
 \pm \left(-\frac{t(t+2)}{t^2+t+1},\frac{2t+1}{t^2+t+1} \right)
 \right\} \]
in \cref{Dstr} corresponds to the regular hexagon
\small
\begin{align*}
H:=\left\{\pm \left(\frac{t^2+4t+1}{2(t^2+t+1)},\frac{\sqrt{3}(t^2-1)}{2(t^2+t+1)}  \right),
 \pm \left(\frac{t^2-2t-2}{2(t^2+t+1)},-\frac{\sqrt{3}t(t+2)}{2(t^2+t+1)} \right),
 \pm \left(\frac{-2t^2-2t+1}{2(t^2+t+1)},\frac{\sqrt{3}(2t+1)}{2(t^2+t+1)} \right)
 \right\}
  \end{align*}
 \normalsize
as $X \cdot {}^t P =H$. 
Therefore, to find rational designs, it is more natural to consider ellipsoidal designs rather than spherical designs.
See Section~\ref{PTEprob} for details.
\end{example}

We shall now move to the proof of \cref{tight} (ii).

\begin{lemma} \label{ratOQ}
If $X$ is a rational tight design, then we have 
 $A_{2\pi/(n+1)} \in M_2(\mathbb{Q})$.
\end{lemma}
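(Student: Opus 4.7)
The plan is to invoke \cref{tight}(i), which tells us that any tight $n$-design $X\subset C_D(r)$ is of the form $(x_0,y_0)^{C^{(Q)}_{n+1}}$, i.e., the orbit of some $(x_0,y_0)\in C_D(r)$ under the cyclic group generated by $A:=A_{2\pi/(n+1)}$. Labelling the $n+1$ points of $X$ in cyclic order as $v_0,v_1,\dots,v_n$ (so that $v_{k+1}=Av_k$ for all $k$), the rationality hypothesis on $X$ says precisely that $v_k\in\mathbb{Q}^2$ for each $k$.

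The case $n=1$ is trivial: substituting $\phi=\pi$ into the explicit formulas for $A_\phi$ in \cref{OQ-inv} yields $A_\pi=-I_2$ in both residue classes of $D$, and this obviously lies in $M_2(\mathbb{Q})$. Assume from here on that $n\geq 2$.

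The main point is to verify that $v_0,v_1\in\mathbb{R}^2$ are linearly independent. Both lie on the ellipse, so ${}^tv_0\,Q\,v_0={}^tv_1\,Q\,v_1=r$; hence any relation $v_1=\lambda v_0$ would force $\lambda=\pm1$, equivalently $A$ would have a real eigenvector. But conjugating by the matrix $P$ from the proof of \cref{Lap} turns $A$ into the standard planar rotation by $2\pi/(n+1)$, and for $n\geq 2$ this angle belongs to $(0,\pi)$, so it has no real eigenvector. Linear independence follows.

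Once $[v_0\mid v_1]$ is known to be invertible (and its entries are rational), the identity
\begin{equation*}
A\cdot[v_0\mid v_1]=[v_1\mid v_2]
\end{equation*}
can be inverted to give $A=[v_1\mid v_2]\cdot[v_0\mid v_1]^{-1}$, a product of matrices with rational entries. Hence $A\in M_2(\mathbb{Q})$. The only non-routine step is the linear independence check; once it is in place, the conclusion is a two-line linear algebra computation.
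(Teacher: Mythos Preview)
Your argument is correct. Both you and the paper begin by invoking \cref{tight}(i) to realise $X$ as a single $C^{(Q)}_{n+1}$-orbit, but the subsequent linear-algebra step differs. The paper uses only one rational point $(x,y)$ and its image $A_{2\pi/(n+1)}(x,y)^{t}$: it rewrites the action of $A_{2\pi/(n+1)}$ on $(x,y)^{t}$ as an invertible rational $2\times2$ matrix (with determinant $r/D$ or $2r/D$) applied to the vector $(\cos(2\pi/(n+1)),\sqrt{D}\sin(2\pi/(n+1)))^{t}$, and then inverts to obtain these two quantities in $\mathbb{Q}$; the rationality of every entry of $A_{2\pi/(n+1)}$ follows from its explicit description in \cref{OQ-inv}. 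Your route instead exploits three consecutive orbit points $v_0,v_1,v_2$ and recovers $A$ as $[v_1\mid v_2][v_0\mid v_1]^{-1}$ after checking independence of $v_0,v_1$. Your argument is more basis-free and would apply to any generator of a finite cyclic subgroup of $\GL_2(\mathbb{R})$ with a rational orbit; the paper's argument, by contrast, isolates $\cos(2\pi/(n+1))$ and $\sqrt{D}\sin(2\pi/(n+1))$ directly, which is exactly the input needed for the Niven-type argument in the proof of \cref{tight}(ii).
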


\begin{proof}
Suppose that $X \subset C_D(r)$ is a rational tight $n$-design.
Then, by \cref{tight}, there exists $(x,y) \in X \cap \mathbb{Q}^2$ such that
$(x,y)^{A_{2\pi/(n+1)}} \in \mathbb{Q}^2$.

\bigskip

\begin{enumerate}
\item[(i)]
Suppose that $D \equiv 1, \; 2 \pmod{4}$.
Since
\[A_{2\pi/(n+1)}=\begin{pmatrix}
\cos  \left(2\pi/(n+1)\right) & -\sqrt{D} \sin \left(2\pi/(n+1)\right)\\
\frac{\sin \left(2\pi/(n+1)\right) }{\sqrt{D}}& \cos\left( 2\pi/(n+1)\right) 
\end{pmatrix}\]
by \cref{OQ-inv}, we can write
\begin{align*}
A_{2\pi/(n+1)} \begin{pmatrix}
x\\
y
\end{pmatrix}= 
 \begin{pmatrix}
x & -y\\
y & \frac{x}{D}
\end{pmatrix}
\begin{pmatrix}
\cos \left(2\pi/(n+1)\right)  \\
\sqrt{D} \sin \left(2\pi/(n+1)\right) 
\end{pmatrix}.
\end{align*}
This completes the proof since $(x,y) \in \mathbb{Q}^2$ and
\[\det \begin{pmatrix}
x & -y\\
y & \frac{x}{D}
\end{pmatrix}
=\frac{x^2+Dy^2}{D}=\frac{r}{D} \in \mathbb{Q}^{\times}. \]

\bigskip

\item[(ii)]
Suppose that $D \equiv 3 \pmod{4}$.
Since
\[
A_{2\pi/(n+1)}=\begin{pmatrix}
\cos \left(2\pi/(n+1)\right) -\frac{\sin \left(2\pi/(n+1)\right) }{\sqrt{D}} & -\frac{(D+1)}{2\sqrt{D}} \sin \left(2\pi/(n+1)\right) \\
\frac{2\sin \left(2\pi/(n+1)\right)}{\sqrt{D}}& \cos \left(2\pi/(n+1)\right)+\frac{\sin \left(2\pi/(n+1)\right)}{\sqrt{D}}
\end{pmatrix}\]
by \cref{OQ-inv}, we can write
\begin{align*}
A_{2\pi/(n+1)} \begin{pmatrix}
x\\
y
\end{pmatrix}= 
 \begin{pmatrix}
x & -\frac{2x+(D+1)y}{2D}\\
y & \frac{2x+y}{D}
\end{pmatrix}
\begin{pmatrix}
\cos \left(2\pi/(n+1)\right) \\
\sqrt{D} \sin \left(2\pi/(n+1)\right)
\end{pmatrix}.
\end{align*}
This completes the proof since $(x,y) \in \mathbb{Q}^2$ and
\[\det  \begin{pmatrix}
x & -\frac{2x+(D+1)y}{2D}\\
y & \frac{2x+y}{D}
\end{pmatrix}=\frac{2 \left(x^2+xy+\frac{1+D}{4}y^2 \right)}{D}=\frac{2r}{D} \in \mathbb{Q}^{\times}.
\qedhere
\]
\end{enumerate}
\end{proof}

As implied by the following lemma, if $\theta$ is a rational multiple of $2 \pi$, then the values of $\cos \theta$ and $\sin \theta$ are restricted.

\begin{lemma} [{\cite[Corollary 3.12]{Niven}}] \label{Niven}
If $\theta \in 2 \pi \mathbb{Q}$ and $\sin \theta \in \mathbb{Q}$ $($resp.\ $\cos \theta \in \mathbb{Q})$, then it holds that 
$\sin \theta \in \{0, \pm1, \pm 1/2 \}$ $($resp.\ $\cos \theta \in \{0, \pm1, \pm 1/2 \})$.
\end{lemma}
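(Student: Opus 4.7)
The plan is to show that both $2\cos\theta$ and $2\sin\theta$ are algebraic integers whenever $\theta \in 2\pi\mathbb{Q}$, and then to combine this with the rationality hypothesis and the elementary bounds $|2\cos\theta|,|2\sin\theta| \leq 2$.

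First, I would write $\theta = 2\pi p/q$ with $p,q \in \mathbb{Z}$ and set $\omega = e^{i\theta}$, so that $\omega^q = 1$. Thus $\omega$ is a root of unity, hence an algebraic integer, and so is $\omega^{-1} = \overline{\omega}$. The identities
\begin{align*}
2\cos\theta = \omega + \omega^{-1}, \qquad 2\sin\theta = -i(\omega - \omega^{-1})
\end{align*}
then realize $2\cos\theta$ and $2\sin\theta$ as $\mathbb{Z}[i]$-linear combinations of algebraic integers, and hence as algebraic integers themselves, since the ring $\overline{\mathbb{Z}}$ of all algebraic integers contains $i$.

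Assuming now $\cos\theta \in \mathbb{Q}$, the number $2\cos\theta$ lies in $\mathbb{Q}\cap\overline{\mathbb{Z}} = \mathbb{Z}$, and the bound $|2\cos\theta| \leq 2$ confines it to $\{0,\pm 1,\pm 2\}$, yielding $\cos\theta \in \{0,\pm 1/2,\pm 1\}$. The sine case is treated identically, with $2\sin\theta$ replacing $2\cos\theta$; the reality (and hence rationality, under the hypothesis) of $2\sin\theta$ is guaranteed by the explicit formula despite the factor $-i$.

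I do not foresee any substantial obstacle: the only nontrivial ingredient is the standard fact that $\mathbb{Q} \cap \overline{\mathbb{Z}} = \mathbb{Z}$, i.e., a rational number which is a root of a monic polynomial over $\mathbb{Z}$ is itself an integer. Everything else reduces to manipulating $\omega + \omega^{-1}$ and $\omega - \omega^{-1}$ inside the cyclotomic ring $\mathbb{Z}[\omega,i]$.
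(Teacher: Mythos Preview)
Your argument is correct and is the standard proof of Niven's theorem. Note that the paper does not give its own proof of this lemma at all: it is simply quoted as \cite[Corollary~3.12]{Niven} and used as a black box in the proof of \cref{tight}~(ii). Your sketch therefore supplies what the paper omits, and it matches the classical treatment (algebraic-integer argument via $2\cos\theta=\omega+\omega^{-1}$) that one finds in Niven's book.
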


We are ready to complete the proof of \cref{tight} (ii). 
\begin{proof} [{Proof of \cref{tight} $(ii)$}] 
Suppose that $X \subset C_D(r)$ is
a rational tight $n$-design.
Then, by \cref{ratOQ},  there exists 
$t \in \mathbb{Q}$ such that 
\[\sin 2\pi/(n+1)=t\sqrt{D}, \;\; \cos  2\pi/(n+1)=\pm \sqrt{1-t^2D} \in \mathbb{Q}.\]
By \cref{Niven}, $\sqrt{1-t^2D}=0$ or $1$ or $1/2$.

\begin{enumerate} 
\item[(i)] If $ \sqrt{1-t^2D}=0$, then we have 
$t= \pm 1/\sqrt{D} \in \mathbb{Q}$. Since $D$ is a square-free integer, we must have $D=1$.
\item[(ii)] If $ \sqrt{1-t^2D}=1$, then 
we have $t=0$ by $D \in \mathbb{Z}_{>0}$. Therefore, 
we must have $\sin 2\pi/(n+1)=0$ and $\cos 2\pi/(n+1)=1$, which 
contradicts $n \in \mathbb{N}$.
\item[(iii)] If $ \sqrt{1-t^2D}=1/2$, then we have $t=\pm \sqrt{3}/(2\sqrt{D})$.
Since $D$ is a square-free integer, we must have $D=3$.
\qedhere
\end{enumerate}
\end{proof}


\section{Alpers--Tijdeman solution, Borwein solution and ellipsoidal designs} \label{PTEprob} 

In this section, we first provide a combinatorial criterion for the construction of solutions of $\PTE_2$ from a pair of ellipsoidal designs (\cref{DtoPTE}), and 
then, as a main theorem, obtain a parametric ideal solution of degree $5$ (\cref{Dstr}).
Moreover, we establish a two-dimensional extension of Borwein's solution (\ref{Borwein}) (\cref{2DBorwein}). 
Another main theorem of this section is the equivalence between the two-dimensional Borwein solution and the Alpers--Tijdeman solution (\ref{ATsol}) over $\mathbb{Q}$ (\cref{2BvsAT}). As an important corollary of this result, we discover a family of ellipsoidal $5$-designs among the Alpers--Tijdeman solution.

\begin{proposition} \label{DtoPTE}
If there exists a pair of ellipsoidal $n$-designs with $m$ points, then so does a solution of degree $n$ and size $m$ for $\PTE_2$.
\end{proposition}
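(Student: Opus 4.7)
The plan is to read off the $\PTE_2$ relations directly from the defining identity of an ellipsoidal design. Let $X_1,X_2 \subset C_D(r)$ be two ellipsoidal $n$-designs, each of cardinality $m$. By \eqref{EtD}, for every polynomial $f$ of total degree at most $n$,
\[
\frac{1}{m}\sum_{p\in X_1} f(p) \;=\; \text{(the $d\tau$-average of $f$ over $C_D(r)$)} \;=\; \frac{1}{m}\sum_{p\in X_2} f(p).
\]
Since the two cubature sums equal the same integral, they equal each other. Specializing $f(x,y)=x^{k_1}y^{k_2}$ for $1\le k_1+k_2\le n$ reproduces verbatim the system of power-sum identities that defines a $\PTE_2$ solution of degree $n$ and size $m$, formed by the points of $X_1$ versus $X_2$.

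Two routine issues remain to upgrade this chain of equalities into a genuine $\PTE_2$ solution in the sense of \S\ref{sect:Intro}. The first is disjointness of the two multisets: if $X_1\cap X_2\neq\emptyset$, I would replace $X_2$ by $M\cdot X_2$ for a suitably chosen $M\in\mathrm{O}(Q)$. By \cref{OQ-inv} both $C_D(r)$ and $d\tau$ are $\mathrm{O}(Q)$-invariant, so $M\cdot X_2$ is again an ellipsoidal $n$-design of size $m$; moreover $\mathrm{O}(Q)$ contains the continuous family of rotations $A_\phi$ ($\phi\in[0,2\pi)$), and only finitely many $\phi$ can send a point of $X_2$ to a point of $X_1$, so a generic $M=A_\phi$ makes the two sets disjoint. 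This is exactly the rotation trick already used in the proof of \cref{ESB}. The second issue is integrality: the definition of $\PTE_2$ demands lattice points, but the identities above hold regardless of where the coordinates live. For rational designs, a uniform scaling $p\mapsto Np$ by a common denominator $N\in\mathbb{Z}_{>0}$ multiplies each equation of total degree $k$ by $N^k$, preserving every PTE relation while clearing denominators.

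The genuinely substantive step is the first one: converting a cubature identity into a PTE identity. Everything else (choice of monomial basis for degree $\leq n$, disjointification by an $\mathrm{O}(Q)$-rotation, clearing denominators) is mechanical. The only subtlety worth flagging is that an $\mathrm{O}(Q)$-rotation used to enforce disjointness can destroy rationality of coordinates; this is not a problem for the abstract statement of \cref{DtoPTE}, where one simply reads ``a pair of ellipsoidal designs'' as two designs already disjoint as multisets, and where rotational disjointification is only a safety net. In the intended application to rational solutions (e.g.\ \cref{Dstr}), one arranges the two rational designs to be disjoint from the outset, thereby avoiding the need for any irrational rotation.
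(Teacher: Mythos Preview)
Your proof is correct and follows essentially the same approach as the paper: equate the two cubature sums to the common integral, then disjointify by an $\mathrm{O}(Q)$-rotation using the invariance of $d\tau$ (\cref{OQ-inv}). You are in fact slightly more careful than the paper, which does not explicitly address the integrality requirement in the definition of $\PTE_2$ nor the potential loss of rationality under rotation; your remarks on clearing denominators for rational designs and on arranging disjointness beforehand in the rational setting are apt clarifications, but the underlying argument is the same.
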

\begin{proof}
Let 
\begin{align*}
\frac{1}{m}\sum_{(x,y) \in X} f(x,y)= \int_{C_D(r)} f(x,y)  d \tau(x,y), 
&& \frac{1}{m}\sum_{(x',y') \in Y} f(x',y')= \int_{C_D(r)} f(x,y) d \tau(x,y)
\end{align*}
be a pair of ellipsoidal $n$-designs. 
Since $\mathrm{O}(Q)$ is an infinite group acting transitively on $C_D(r)$, we can take $M \in \mathrm{O}(Q)$ such that
$X$ and $M^{-1}Y$ are disjoint if necessary. Since
$d\tau$ is $\mathrm{O}(Q)$-invariant,
\begin{align*}
\frac{1}{m}\sum_{(x'',y'') \in M^{-1}Y} f(x'',y'')= \int_{C_D(r)} f(x,y) d \tau(x,y)
\end{align*}
is also an ellipsoidal $n$-design.
Therefore, we have
\begin{align*}
\sum_{(x,y) \in X} f(x,y)=\sum_{(x'',y'') \in M^{-1}Y} f(x'',y''),
\end{align*}
which yields a solution of $\PTE_2$.
Since $\mathrm{O}(Q)$ is an infinite group acting transitively on $C_D(r)$, we can also obtain a parametric solution of $\PTE_2$ by taking various $M$.
\end{proof}

\begin{remark} \label{rem:lattice1}
By \cref{TDD}, the minimal shell $\Lambda^1_3$ of the Eisenstein lattice is a $5$-design.
Thus, by combining with \cref{OQ-inv}, we obtain a parametric solution of $\PTE_2$. 
It was Mishima et al.\ \cite{MLSU} who first observed a one-dimensional version of \cref{DtoPTE}.
\end{remark}

\begin{definition} [{Rational design/solution}] \label{def:rational}
An ellipsoidal design is said to be {\it rational} if the coordinates of points are all rational numbers.
Similarly, we define a {\it rational solution} to the PTE problem. 
\end{definition}

\begin{remark} \label{rat}
For $D=1$, it is not known whether there exists a rational $4$-design (see Cui et al.\ \cite[\S 1.3]{CXX}).
They also mention that finding spherical designs $X$ such that the inner products between points in $X$ are all rational is tantamount to finding rational designs on some ellipsoid.  
Since $S^1$ and $C_D(r)$ are homeomorphic, there may not seem to be much difference between spherical design and ellipsoidal design.
However, by considering rational points, we can find an essential gap between the two notions, as will be seen in \cref{Dstr}.
\end{remark}

As opposed to the suggestion by Cui et al., infinitely many rational designs for $D \neq 1$ are included in the following parametric solution of $\PTE_2$ (see also \cref{RD}):

\begin{theorem} \label{Dstr}
There exists an infinite family of rational ellipsoidal $5$-design on the ellipse $C_3(1)$ given by
\begin{align} \label{RTED}
 \left\{\pm \left(\frac{2t+1}{t^2+t+1},\frac{(t^2-1)}{t^2+t+1}  \right),
 \pm \left(\frac{(t^2-1)}{t^2+t+1},-\frac{t(t+2)}{t^2+t+1} \right),
 \pm \left(-\frac{t(t+2)}{t^2+t+1},\frac{2t+1}{t^2+t+1} \right)
 \right\}. 
 \end{align}
In particular, there exists a parametric rational ideal solution of $\PTE_2$ given by
\begin{multline} \label{2dimPTE}
 \left[\pm \left(\frac{2t_1+1}{t_1^2+t_1+1},\frac{(t_1^2-1)}{t_1^2+t_1+1}  \right),
 \pm \left(\frac{(t_1^2-1)}{t_1^2+t_1+1},-\frac{t_1(t_1+2)}{t_1^2+t_1+1} \right),
 \pm \left(-\frac{t_1(t_1+2)}{t_1^2+t_1+1},\frac{2t_1+1}{t_1^2+t_1+1} \right)
 \right]\\
 =_5 
 \left[\pm \left(\frac{2t_2+1}{t_2^2+t_2+1},\frac{(t_2^2-1)}{t_2^2+t_2+1}  \right),
 \pm \left(\frac{(t_2^2-1)}{t_2^2+t_2+1},-\frac{t_2(t_2+2)}{t_2^2+t_2+1} \right),
 \pm \left(-\frac{t_2(t_2+2)}{t_2^2+t_2+1},\frac{2t_2+1}{t_2^2+t_2+1} \right)
 \right],
 \end{multline}
where $t_1$, $t_2 \in \mathbb{Q}$.
\end{theorem}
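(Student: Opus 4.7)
The plan is to exhibit the six points in (\ref{RTED}) as the $C_6^{(Q)}$-orbit of a single rational point on $C_3(1)$, where $C_6^{(Q)} := \langle A_{\pi/3} \rangle \subset \mathrm{O}(Q)$, and then verify the $5$-design property via Sobolev's theorem (\cref{ellSobolev}); the whole construction works because for $D = 3$ the rotation matrix $A_{\pi/3}$ is itself rational.

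First I would verify by a direct polynomial identity that $p(t) := \bigl(\frac{2t+1}{t^2+t+1},\, \frac{t^2-1}{t^2+t+1}\bigr)$ lies on $C_3(1)$, and record the identity $c := \frac{t(t+2)}{t^2+t+1} = a + b$ where $(a, b) := p(t)$. Specializing the formula in \cref{OQ-inv} to $D = 3$ and $\phi = \pi/3$ gives $\sin\phi/\sqrt{D} = 1/2$ and $(D+1)\sin\phi/(2\sqrt{D}) = 1$, collapsing $A_{\pi/3}$ to the rational matrix $\bigl(\begin{smallmatrix}0 & -1 \\ 1 & 1\end{smallmatrix}\bigr)$. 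Iterating on $(a,b)$ produces
\[
(a,b) \to (-b, a+b) \to (-(a+b), a) \to (-a,-b) \to (b, -(a+b)) \to (a+b, -a) \to (a, b),
\]
and the relation $c = a+b$ identifies these six points with the set in (\ref{RTED}).

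Second, I would apply \cref{ellSobolev} with $G = C_6^{(Q)}$, so that the design equation (\ref{EtD}) need only be checked on $\mathcal{P}_5(\mathbb{R}^2)^G$. The key input—an analogue of \cref{G-inv} for the order-six rotation group—is that
\[
\mathcal{P}_5(\mathbb{R}^2)^{C_6^{(Q)}} = \bigl\langle 1,\ x^2+xy+y^2,\ (x^2+xy+y^2)^2 \bigr\rangle_{\mathbb{R}};
\]
indeed, conjugation by $P$ (from the proof of \cref{Lap}) identifies $C_6^{(Q)}$ with the standard rotation group $C_6 \subset \mathrm{O}(2)$, whose invariants in degree $\le 5$ are generated by $x^2+y^2$ alone (the first new invariants arise only in degree six). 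On $C_3(1)$ the form $x^2+xy+y^2$ equals $1$, so both sides of (\ref{EtD}) reduce to the same constant for each basis element, and the ellipsoidal $5$-design property follows.

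Finally, $p(t) \in \mathbb{Q}^2$ for every $t \in \mathbb{Q}$ and $A_{\pi/3}$ is rational, so each orbit is a rational design; moreover, two parameter values determine the same orbit only when $p(t_2)$ lies in the finite set $C_6^{(Q)} \cdot p(t_1)$, yielding infinitely many distinct rational $5$-designs. The parametric ideal $\PTE_2$-solution (\ref{2dimPTE}) is then immediate from \cref{DtoPTE}: for any $t_1 \neq t_2$ giving distinct (hence disjoint) orbits $X_{t_1}$ and $X_{t_2}$, both $\frac{1}{6}\sum_{(x,y)\in X_{t_i}} f(x,y)$ equal the common integral $\int_{C_3(1)} f\, d\tau$ for every $f \in \mathcal{P}_5(\mathbb{R}^2)$, which specializes to the stated equalities on the monomials $x^{k_1} y^{k_2}$ with $1 \le k_1 + k_2 \le 5$. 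The only step that requires genuine work is the invariant-theoretic computation in the second paragraph; once in hand, the rest is bookkeeping, and the rationality of $A_{\pi/3}$—special to $D = 3$ and consistent with \cref{tight} (ii)—is what drives the entire argument.
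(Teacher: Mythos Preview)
Your proof is correct and close in spirit to the paper's, but the key lemma you invoke is different.  The paper obtains the set (\ref{RTED}) by starting from the Eisenstein minimal shell $\Lambda_3^1=\{(\pm1,0),(0,\pm1),(1,-1),(-1,1)\}$, which is already known to be a $5$-design by \cref{TDD}, and then acting by a rational rotation $A_\phi\in\mathrm{O}(Q)$ (produced from the standard line parametrization of rational points on $C_3(1)$); the $\mathrm{O}(Q)$-invariance of $d\tau$ (\cref{OQ-inv}) immediately transports the $5$-design property.  You instead recognise (\ref{RTED}) directly as the $C_6^{(Q)}$-orbit of $p(t)$ and re-establish the $5$-design property via \cref{ellSobolev} together with the invariant computation $\mathcal{P}_5(\mathbb{R}^2)^{C_6^{(Q)}}=\langle 1,\,x^2+xy+y^2,\,(x^2+xy+y^2)^2\rangle$, i.e.\ an even-order analogue of \cref{G-inv}.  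Since $\Lambda_3^1$ is itself the $C_6^{(Q)}$-orbit of $(1,0)$ and $A_\phi(1,0)=p(t)$, the two constructions produce identical sets; the difference is only whether the design property is imported from \cref{TDD} or verified from scratch via Sobolev.  Your route is slightly more self-contained (it does not appeal to Pandey's shell theorem) at the cost of the extra invariant computation; the paper's route is shorter given the ambient results but relies on the deeper input \cref{TDD}.
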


\begin{proof}
For any rational number $t$,
 the intersection of the line $y=t(x-1)$ and the ellipse $C_3(1)$ are $(1,0)$ and $((t^2-1)/(t^2+t+1),-t(t+2)/(t^2+t+1))$.
The transformation that sends $(0,-1)$ to $((t^2-1)/(t^2+t+1),-t(t+2)/(t^2+t+1))$ is represented by $A_{\phi}$ (see \cref{OQ-inv}) with 
\[(\cos \phi, \sin \phi)=\left(\frac{t^2+4t+1}{2(t^2+t+1)},\frac{\sqrt{3}(t^2-1)}{2(t^2+t+1)} \right). \]
Note that $\Lambda^1_3=\{(\pm1, 0), (0,\pm1), (1,-1), (-1,1) \}$ is a $5$-design by \cref{TDD} and
$d\tau$ is invariant under
\[A_{\phi}=\begin{pmatrix}
\frac{2t+1}{t^2+t+1} & -\frac{(t^2-1)}{t^2+t+1}\\
\frac{(t^2-1)}{t^2+t+1} & \frac{t(t+2)}{t^2+t+1}
\end{pmatrix} \] by \cref{OQ-inv}. 
Therefore, $\Lambda^1_3 \cdot {}^{t}A_{\phi}$ is also a rational design for all $t \in \mathbb{Q}$.
By combining this with \cref{DtoPTE}, we obtain the desired rational ideal solutions of $\PTE_2$.
\end{proof}

\bigskip

Now, the following is another main theorem of this section:

\begin{theorem} \label{2BvsAT} 
The Alpers--Tijdeman solution $(\ref{ATsol})$  is equivalent to the two-dimensional Borwein solution $(\ref{2dimBorwein})$ over $\mathbb{Q}$. 
In particular, both solutions contain our parametric solution $(\ref{2dimPTE})$ over $\mathbb{Q}$.
\end{theorem}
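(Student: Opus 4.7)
The strategy is to exhibit an explicit rational affine transformation, together with a rational change of parameters, identifying the Alpers--Tijdeman family $(\ref{ATsol})$ with the two-dimensional Borwein family $(\ref{2dimBorwein})$.

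First, I would center both sides. A direct summation shows that both multisets in $(\ref{ATsol})$ share the common centroid $(a,\;3a+2b)$, and since a simultaneous translation of both sides of a PTE identity preserves the equality $\sum a_{i1}^{k_1}a_{i2}^{k_2} = \sum b_{i1}^{k_1}b_{i2}^{k_2}$ for all $1 \leq k_1+k_2 \leq n$, I may replace $(\ref{ATsol})$ by its centered version. One then finds that the first centered multiset splits into three antipodal pairs
\[
\bigl\{\pm(a,\;3a+2b),\ \pm(a+b,\;-3a-b),\ \pm(2a+b,\;b)\bigr\},
\]
while the second centered multiset is obtained from the first by the reflection $(x,y)\mapsto(x,-y)$. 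Analogously, by its construction as a cyclic extension of the classical Borwein identity $(\ref{Borwein})$, the two sides of the centered $(\ref{2dimBorwein})$ decompose into three antipodal pairs related by a rational involution.

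Next, I would determine an explicit matrix $M \in \GL_2(\mathbb{Q})$ together with a $\mathbb{Q}$-linear change of parameters $(a,b) \leftrightarrow (m,n)$ such that $M$ sends the three $\pm$-pairs on each side of the centered $(\ref{ATsol})$ bijectively to the three $\pm$-pairs on the corresponding side of the centered $(\ref{2dimBorwein})$, and conjugates the Alpers--Tijdeman reflection to the Borwein involution. Since each centered six-point multiset is completely determined by its three-point $\pm$-orbit, matching these three-point orbits suffices to intertwine the full multisets. A useful structural remark here is that, by \cref{ratOQ} and the proof of \cref{tight} (ii), the rotation $A_{2\pi/6}$ lies in $\mathrm{O}(Q)\cap\GL_2(\mathbb{Q})$ for $D=3$, so the underlying order-six rotational symmetry of both families is realized over $\mathbb{Q}$.

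For the final assertion that both families contain $(\ref{2dimPTE})$, I would specialize: for an appropriate one-parameter rational subfamily of $(a,b)$ (respectively of $(m,n)$), the centered Alpers--Tijdeman configuration (respectively the centered two-dimensional Borwein configuration), after division by a common rational scalar, lies on the ellipse $C_3(1)$. Comparing this with the description $\Lambda_3^1\cdot{}^tA_\phi$ employed in the proof of \cref{Dstr} (as $\phi$ ranges over the rational angles of $C_3(1)$) then shows that both families realize every member of $(\ref{2dimPTE})$ as $t_1,t_2\in\mathbb{Q}$ vary. The principal obstacle lies in the second step: guessing the correct permutation of the six labels on each side and the precise rational matrix $M$ and parameter substitution $(a,b)\leftrightarrow(m,n)$. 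Once this ansatz is identified, the verification reduces to a small number of polynomial identities in $a$, $b$, $m$, $n$, from which the desired equivalence over $\mathbb{Q}$ follows.
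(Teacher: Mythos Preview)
Your overall strategy---center the Alpers--Tijdeman configuration, then search for an affine map to the two-dimensional Borwein configuration---is exactly the paper's approach, and your centering computation correctly recovers the form recorded there as $(\ref{ATsol-2})$. However, the ansatz you write down cannot succeed as stated. You ask for a \emph{fixed} $M\in\GL_2(\mathbb{Q})$ together with a $\mathbb{Q}$-\emph{linear} change of parameters $(a,b)\leftrightarrow(m,n)$; but the centered Alpers--Tijdeman entries are linear in $(a,b)$, while the Borwein entries such as $-nm-n-m+3$ are quadratic in $(m,n)$, and a fixed linear map composed with a linear substitution cannot raise degree. In the paper's proof the entries of $M$ are themselves rational functions of the parameters (for one direction $A=-2m$, $B=-2mn/3$, $C=m(n+1)$, $D=m(n-3)/3$, with $a=1$ and $b=-3(m-1)/(2m)$; for the converse $A=a/6$, $B=a/3$, $C=-a/2$, $D=0$, with $n=0$ and $m=-(3a+2b)/a$). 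So the ``principal obstacle'' you flag is not a matter of guessing a permutation: one must solve a system that is bilinear in the unknown matrix entries and the target parameters. The paper handles this by invoking symmetry and linearity (\cref{KeyLem}) to cut the number of scalar equations, and then freezing one parameter ($a=1$, respectively $n=0$) to remove a scaling degree of freedom before solving.

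For the ``in particular'' clause the paper also takes a different route from yours. Rather than specializing onto $C_3(1)$, it observes (\cref{lifting}) that both $(\ref{2dimBorwein})$ and $(\ref{2dimPTE})$ share the cyclic shape $[\pm(u_1,u_2),\pm(u_2,u_3),\pm(u_3,u_1)]$, so the two-dimensional containment reduces to the one-dimensional containment of $(\ref{1dimPTE})$ in Borwein's $(\ref{Borwein})$, which is then solved explicitly (\cref{vsBorwein}). Your specialization idea is viable on the Borwein side (the cyclic shape forces the six points onto a common $C_3(r)$), but note that the centered Alpers--Tijdeman points $\pm(a,3a+2b)$, $\pm(a+b,-3a-b)$, $\pm(2a+b,b)$ do \emph{not} lie on any $C_3(r)$ for generic $(a,b)$, so that route still requires passing through the Borwein form first.
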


Before proving \cref{2BvsAT}, we give a brief introduction to the $r$-dimensional PTE problem.

\begin{problem} [{\cite{Alpers-Tijdeman}}] \label{prob:AT1}
The {\it $r$-dimensional PTE problem $(\PTE_r)$ of degree $n$ and size $m$}, 
asks whether there exists a disjoint pair of multisets
\[
\{(a_{11}, \ldots, a_{1r}), \ldots, (a_{m1}, \ldots, a_{mr})\}, \{(b_{11}, \ldots, b_{1r}), \ldots, (b_{m1}, \ldots, b_{mr})\} \subset \mathbb{Z}^r
\]
such that
\[
\sum_{i=1}^m a_{i1}^{k_1} \cdots a_{ir}^{k_r}=\sum_{i=1}^m b_{i1}^{k_1} \cdots b_{ir}^{k_r} 
\quad (1 \leq k_1+ \cdots +k_r \leq n).
\]
\end{problem}

\begin{definition} \label{equiv}
\hangindent\leftmargini
\textup{(i)}\hskip\labelsep  (Cf.\ \cite[p.\ 87]{Borwein2002}, \cite[p.\ 405]{Alpers-Tijdeman})
A solution
$[\mathbf{a}_1, \ldots, \mathbf{a}_m]=_n [\mathbf{b}_1, \ldots, \mathbf{b}_m]$ of $\PTE_r$ {\it contains} a solution $[\mathbf{c}_1, \ldots, \mathbf{c}_m] =_n [\mathbf{d}_1, \ldots, \mathbf{d}_m]$ over $\mathbb{Q}$ if there exist $M \in M_r(\mathbb{Q})$ and $\mathbf{e} = (e_1,\ldots,e_r)$ such that 
\begin{align*}
 \{ \mathbf{a}_i M + \mathbf{e} \mid  1 \leq i \leq m \} =\{\mathbf{c}_i\mid  1 \leq i \leq m \}, &&
 \{ \mathbf{b}_i  M  + \mathbf{e} \mid 1 \leq i \leq m \} = \{\mathbf{d}_i \mid 1 \leq i \leq m\}
\end{align*}
as multisets.
Two solutions of $\PTE_r$ are {\it equivalent} if they contain each other, i.e., map each other by an affine transformation.

\begin{enumerate}
\setcounter{enumi}{1}
\item[(ii)] (Cf.\ \cite[p.\ 86]{Borwein2002}) A solution $[\mathbf{a}_1, \ldots, \mathbf{a}_m] =_n [\mathbf{b}_1, \ldots, \mathbf{b}_m]$ of $\PTE_r$ is {\it symmetric} if 
\begin{align*}
\{\mathbf{a}_i  \mid 1 \leq i \leq m \}=\{-\mathbf{a}_i  \mid i = 1 \leq i \leq m\}, && \{\mathbf{b}_i  \mid 1 \leq i \leq m \}=\{-\mathbf{b}_i  \mid 1 \leq i \leq m\}
\end{align*}
 as multisets.

\item[(iii)]  (\cite[p.\ 26]{MLSU})
A solution $[\mathbf{a}_1, \ldots, \mathbf{a}_m] =_n [\mathbf{b}_1, \ldots, \mathbf{b}_m]$ of $\PTE_r$ is {\it linear} if there exists $S \subset \{1,\ldots,m\}$ such that $\sum_{i \in S} \mathbf{a}_i=\sum_{i \in S} \mathbf{b}_i=0$. 
\end{enumerate}
\end{definition}

\begin{remark} \label{EDsymlin}
Our solution $(\ref{2dimPTE})$ of $\PTE_2$ is symmetric, which is moreover linear in the sense that the six points on each side can be divided into two triples,
each of whose sum equals $0$. 
What is the most remarkable in the arguments below is that the symmetry and the linearity of (\ref{2dimPTE}) (and (\ref{2dimBorwein})) are effectively used to reduce the  candidates of simultaneous equations that appear in the proof of \cref{2BvsAT,vsBorwein}. 
\end{remark}

We now give some auxiliary lemmas used in the proof of \cref{2BvsAT}.
Hereafter, we think of points $\mathbf{a}_i, \mathbf{b}_i, \mathbf{c}_i, \mathbf{d}_i \in \mathbb{Q}^r$ 
as column vectors rather than row vectors, and so multiply $M \in M_r(\mathbb{Q})$ from the left.
The following lemma is straightforward, which is, however, useful for further arguments below. 

\begin{lemma} \label{KeyLem}
Assume that a symmetric solution
$[\pm \mathbf{a}_1, \ldots, \pm \mathbf{a}_k] =_n [\pm \mathbf{b}_1, \ldots, \pm \mathbf{b}_k]$
of $\PTE_r$ contains a symmetric one
$[\pm \mathbf{c}_1, \ldots, \pm \mathbf{c}_k] =_n [\pm \mathbf{d}_1, \ldots, \pm \mathbf{d}_k]$.
Then it holds that $\mathbf{e}=\mathbf{0}$.
Moreover, assume the linearity condition that
\begin{align} \label{A-1}
\sum_{i=1}^k \mathbf{a}_i=\sum_{i=1}^k \mathbf{b}_i=\sum_{i=1}^k \mathbf{c}_i=\sum_{i=1}^k \mathbf{d}_i= \mathbf{0}. 
\end{align}
We also let
\begin{align} \label{A-2}
M \mathbf{a}_i=\mathbf{c}_i, && M \mathbf{b}_i=\mathbf{d}_i
\end{align}
for $1 \leq i \leq k-1$. Then it holds that 
\begin{align*}
M \mathbf{a}_k=\mathbf{c}_k, && M \mathbf{b}_k=\mathbf{d}_k.
\end{align*}
\end{lemma}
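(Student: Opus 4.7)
The proof is quite short, so my plan has two main steps and I expect no essential obstacle — everything follows from linear-algebraic bookkeeping on the ambient multiset equalities together with the linearity hypothesis.

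First, I will derive $\mathbf{e} = \mathbf{0}$ from the symmetry hypothesis alone (no linearity needed here). By the definition of ``contains'' (\cref{equiv}(i)), applied to the symmetric solutions listed with the $\pm$ convention, we have the multiset equality
\[
\{ M(\pm \mathbf{a}_i) + \mathbf{e} \mid 1 \le i \le k\} = \{ \pm \mathbf{c}_i \mid 1 \le i \le k\}
\]
consisting of $2k$ points on each side. Summing both sides as vectors in $\mathbb{Q}^r$, the left-hand side becomes $M\sum_{i=1}^k(\mathbf{a}_i-\mathbf{a}_i) + 2k\mathbf{e} = 2k\mathbf{e}$, while the right-hand side equals $\sum_{i=1}^k(\mathbf{c}_i-\mathbf{c}_i) = \mathbf{0}$ by symmetry. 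Hence $2k\mathbf{e}=\mathbf{0}$, so $\mathbf{e}=\mathbf{0}$. An identical computation handles the $\mathbf{b}_i/\mathbf{d}_i$ side.

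Next, with $\mathbf{e}=\mathbf{0}$, the assumption (\ref{A-2}) gives $M\mathbf{a}_i = \mathbf{c}_i$ for $1 \le i \le k-1$. I will then combine this with the linearity hypothesis (\ref{A-1}):
\[
M\mathbf{a}_k \;=\; M\Bigl(-\sum_{i=1}^{k-1}\mathbf{a}_i\Bigr) \;=\; -\sum_{i=1}^{k-1}M\mathbf{a}_i \;=\; -\sum_{i=1}^{k-1}\mathbf{c}_i \;=\; \mathbf{c}_k,
\]
where the first equality uses $\sum_{i=1}^k\mathbf{a}_i=\mathbf{0}$, the third uses (\ref{A-2}), and the last uses $\sum_{i=1}^k\mathbf{c}_i=\mathbf{0}$. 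The same chain of identities applied to $\mathbf{b}_i$ and $\mathbf{d}_i$ yields $M\mathbf{b}_k=\mathbf{d}_k$. This completes the argument. The only subtle point — and the reason this lemma will be genuinely useful in the proof of \cref{2BvsAT} — is that symmetry forces the translation vector $\mathbf{e}$ to vanish automatically, so that later one only needs to search for a linear map $M$ rather than a full affine map, and linearity (\ref{A-1}) lets one determine the image of the last pair for free from the images of the first $k-1$ pairs.
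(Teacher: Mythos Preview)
Your proof is correct and follows essentially the same approach as the paper: summing the $2k$ points on each side to force $\mathbf{e}=\mathbf{0}$ from symmetry, and then using the linearity condition (\ref{A-1}) together with (\ref{A-2}) to recover $M\mathbf{a}_k=\mathbf{c}_k$ and $M\mathbf{b}_k=\mathbf{d}_k$. Your write-up is in fact slightly more explicit than the paper's, but there is no substantive difference.
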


\begin{proof}
By the symmetry of the solutions, we have
\begin{align*}
\sum_{i=1}^k \mathbf{a}_i+ \sum_{i=1}^k (-\mathbf{a}_i)=\sum_{i=1}^k \mathbf{c}_i+\sum_{i=1}^k (-\mathbf{c}_i)=
\mathbf{0}, 
\end{align*}
which implies the former statement. 
The latter statement follows from the assumptions (\ref{A-1}) and (\ref{A-2}), i.e.,
\begin{align*}
M \mathbf{a}_k=M \left(\sum_{i=1}^k \mathbf{a}_i -\sum_{i=1}^{k-1}  \mathbf{a}_i \right) = M \cdot \mathbf{0} - \sum_{i=1}^{k-1} \mathbf{c}_i=\mathbf{c}_k, \\
M  \mathbf{b}_k=M \left(\sum_{i=1}^k \mathbf{b}_i -\sum_{i=1}^{k-1}  \mathbf{b}_i \right) = M \cdot \mathbf{0} - \sum_{i=1}^{k-1} \mathbf{d}_i=\mathbf{d}_k. 
\end{align*} 
These complete the proof. 
\end{proof}

In what follows, we restrict our attention to solutions of degree $5$ for $\PTE_r$ for $r=1, 2$. 

The next lemma not only serves as a tool for the proof of \cref{2BvsAT},
but also gives an idea for ^^ cyclically extending' a solution of $\PTE_1$ to that of $\PTE_2$; we will return to the discussion on such ^^ lifting constructions' in a forthcoming paper.

\begin{lemma} [Cyclic lifting] \label{lifting} 
Assume that
$c_i = Aa_i$
and $d_i=Ab_i$ for $1 \leq i \leq 3$, and
\begin{align} 
\label{solB1-2}
[\pm (a_1,a_2), \pm (a_2,a_3), \pm (a_3,a_1)] =_5 [\pm (b_1,b_2), \pm (b_2,b_3), \pm (b_3,b_1)] ,\\
\label{solB2-2}
[\pm (c_1,c_2), \pm (c_2,c_3), \pm (c_3,c_1)] =_5 [\pm (d_1,d_2), \pm (d_2,d_3), \pm (d_3,d_1)] 
\end{align}
are solutions of $\PTE_2$.
Then the solution $(\ref{solB1-2})$ contains the solution $(\ref{solB2-2})$.
\end{lemma}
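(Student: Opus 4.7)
The plan is to show containment by exhibiting an explicit affine map $\mathbf{x}\mapsto M\mathbf{x}+\mathbf{e}$ with $M\in M_2(\mathbb{Q})$ and $\mathbf{e}\in\mathbb{Q}^2$ that transports the left-hand multiset of $(\ref{solB1-2})$ onto the left-hand multiset of $(\ref{solB2-2})$, and similarly on the right-hand side. The first step is to determine $\mathbf{e}$: both solutions in $(\ref{solB1-2})$ and $(\ref{solB2-2})$ are manifestly symmetric under $\mathbf{x}\mapsto-\mathbf{x}$, so the first conclusion of \cref{KeyLem} forces $\mathbf{e}=\mathbf{0}$, and no translation is needed.

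The second step is to choose the linear part $M$. Guided by the hypothesis $c_i=Aa_i$, $d_i=Ab_i$, the natural candidate is the scalar matrix $M=A\,I_2$. A one-line computation shows that for each cyclic pair $(i,j)\in\{(1,2),(2,3),(3,1)\}$, $M(a_i,a_j)^{T}=(Aa_i,Aa_j)^{T}=(c_i,c_j)^{T}$, and analogously $M(b_i,b_j)^{T}=(d_i,d_j)^{T}$. Since $\mathbf{x}\mapsto M\mathbf{x}$ also sends $-\mathbf{x}$ to $-M\mathbf{x}$, this bijects the full six-point multiset on each side of $(\ref{solB1-2})$ to the corresponding six-point multiset on each side of $(\ref{solB2-2})$, which by \cref{equiv}(i) is precisely the containment asserted.

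The main point to verify is the rationality of $M$, i.e., that $A\in\mathbb{Q}$; but this follows from the rationality of the $a_i$ and $c_i$ by taking $A=c_i/a_i$ for any $a_i\neq 0$ (the completely degenerate case $a_1=a_2=a_3=0$ trivializes the solution and is excluded). There is thus no serious obstacle in the proof: the substance of the lemma lies in the observation itself, namely that a scalar relation between the underlying one-dimensional data automatically propagates to a $\mathbb{Q}$-linear relation between the cyclically lifted two-dimensional solutions. In this sense, a one-dimensional PTE relation ``lifts cyclically'' to a two-dimensional one, justifying the name and providing exactly the bookkeeping needed in the proof of \cref{2BvsAT}.
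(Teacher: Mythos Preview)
Your proof is correct and takes essentially the same approach as the paper: both exhibit the scalar matrix $M=\begin{pmatrix}A&0\\0&A\end{pmatrix}$ with $\mathbf{e}=\mathbf{0}$ as the map witnessing containment. Your additional remarks on the rationality of $A$ and the appeal to \cref{KeyLem} are not in the paper's very terse proof, but note that invoking \cref{KeyLem} is unnecessary here, since to prove containment you only need to \emph{exhibit} one admissible pair $(M,\mathbf{e})$, and $\mathbf{e}=\mathbf{0}$ can simply be chosen directly.
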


\begin{proof}
By the assumption, we have
\begin{align*}
\begin{pmatrix}
c_i\\
c_{i+1}
\end{pmatrix}=\begin{pmatrix}
A & 0\\
0 & A
\end{pmatrix}
\begin{pmatrix}
a_i\\
a_{i+1}
\end{pmatrix}, &&
\begin{pmatrix}
d_i\\
d_{i+1}
\end{pmatrix}=\begin{pmatrix}
A & 0\\
0 & A
\end{pmatrix}
\begin{pmatrix}
b_i\\
b_{i+1}
\end{pmatrix}
\end{align*}
for $1 \leq i \leq 3$, where the subscripts of $a_i,b_i,c_i,d_i$ are reduced cyclically modulo $3$. The proof is thus completed.
\end{proof}

The following auxiliary lemma gives a two-dimensional extension of Borwein's solution (\ref{Borwein}) and is of theoretical interest in its own right.  

\begin{proposition} [{Two-dimensional Borwein solution}] \label{2DBorwein} 
There exists a parametric rational ideal solution of $\PTE_2$ given by
\begin{multline}  \label{2dimBorwein}
[\pm (2(n+m),-nm-n-m+3), \pm (-nm-n-m+3,nm-n-m-3),\pm (nm-n-m-3, 2(n+m)) ]\\
=_5  [\pm (2(n-m),nm-n+m+3), \pm (nm-n+m+3,-nm-n+m-3),\pm (-nm-n+m-3, 2(n-m))],
\end{multline}
which is, as with $(\ref{2dimPTE})$ $($see \cref{EDsymlin}$)$, a symmetric solution that consists of two pairs of three linear points on each side.
\end{proposition}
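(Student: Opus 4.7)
The plan is to verify directly the moment identities defining $\PTE_2$ of degree $5$. Set
$a_1=2(n+m)$, $a_2=-nm-n-m+3$, $a_3=nm-n-m-3$ and
$b_1=2(n-m)$, $b_2=nm-n+m+3$, $b_3=-nm-n+m-3$,
so that the two multisets are $\{\pm(a_i,a_{i+1}) : i\in\Z/3\}$ and $\{\pm(b_i,b_{i+1}) : i\in\Z/3\}$, where indices are taken mod $3$. Two immediate reductions streamline the verification: (a) by the symmetry under $(x,y)\mapsto(-x,-y)$, every moment with $k_1+k_2$ odd vanishes identically, so only $k_1+k_2\in\{2,4\}$ need to be checked; (b) both triples satisfy the linearity $a_1+a_2+a_3=0=b_1+b_2+b_3$, i.e.\ $e_1(a)=e_1(b)=0$.

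Write $p_k(x)=x_1^k+x_2^k+x_3^k$ and $S_{k_1,k_2}(x)=\sum_{i\in\Z/3}x_i^{k_1}x_{i+1}^{k_2}$. I would next invoke the classical Borwein identity (\ref{Borwein}), which applied to the triples $(\pm a_i)$ and $(\pm b_i)$ forces $p_k(a)=p_k(b)$ for $1\leq k\leq 5$. Combined with $e_1=0$ and Newton's identities, most of the required $S_{k_1,k_2}$ reduce immediately to power sums: $S_{2,0}=S_{0,2}=p_2$, $S_{1,1}=e_2=-p_2/2$, $S_{4,0}=S_{0,4}=p_4$, and $S_{2,2}=e_2(x_1^2,x_2^2,x_3^2)=(p_2^2-p_4)/2$. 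Since $p_2(a)=p_2(b)$ and $p_4(a)=p_4(b)$, these cases match on both sides.

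The genuinely two-dimensional cases are $(k_1,k_2)=(3,1)$ and $(1,3)$, which are not symmetric functions of the $a_i$'s. Substituting $a_{i+1}=-a_i-a_{i-1}$ into $S_{3,1}(a)=\sum_i a_i^3 a_{i+1}$ and reindexing gives $S_{3,1}(a)+S_{1,3}(a)=-p_4(a)$. On the other hand, using $a_i+a_{i+1}=-a_{i-1}$,
\begin{align*}
S_{3,1}(a)-S_{1,3}(a)=\sum_{i\in\Z/3}a_i a_{i+1}(a_i-a_{i+1})(a_i+a_{i+1})=-a_1 a_2 a_3\sum_{i\in\Z/3}(a_i-a_{i+1})=0,
\end{align*}
the last equality being a telescoping sum. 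Hence $S_{3,1}(a)=S_{1,3}(a)=-p_4(a)/2$, and likewise for $b$; so these cases also agree.

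The main obstacle is precisely the $(3,1)/(1,3)$ step, because these two-variable moments are not symmetric in $a_1,a_2,a_3$ and cannot be deduced from equality of one-variable power sums alone. The resolution uses linearity $e_1=0$ twice: once to make the antisymmetric combination $S_{3,1}-S_{1,3}$ telescope to zero, and once to reduce the symmetric combination $S_{3,1}+S_{1,3}$ to $-p_4$. With all moment identities in hand, (\ref{2dimBorwein}) is established, and the ideal property is immediate from the count $m=6=5+1=n+1$.
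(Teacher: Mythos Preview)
Your argument is correct and follows essentially the same strategy as the paper: use the central symmetry to kill all odd-degree moments, invoke the one-dimensional Borwein identity (\ref{Borwein}) for the pure powers $x^k$ and $y^k$, and then verify the mixed even moments $xy,\;x^2y^2,\;x^3y,\;xy^3$ separately.  The difference lies only in how the mixed moments are handled.  The paper simply evaluates them explicitly (obtaining $-2(m^2+3)(n^2+3)$, $2(m^2+3)^2(n^2+3)^2$, $-2(m^2+3)^2(n^2+3)^2$, $-2(m^2+3)^2(n^2+3)^2$, respectively) and observes that the values coincide for the two sides.  You instead exploit the linearity $a_1+a_2+a_3=0=b_1+b_2+b_3$ to reduce every mixed moment to a function of $p_2$ and $p_4$ alone, so that the one-dimensional Borwein identity finishes the job without any further computation.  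In particular, your telescoping trick for $S_{3,1}-S_{1,3}$ and the reduction $S_{3,1}+S_{1,3}=-p_4$ are nice structural observations that the paper does not make explicit.  One small imprecision: the Borwein identity on $\{\pm a_i\}$ only yields $p_k(a)=p_k(b)$ for \emph{even} $k$ (for odd $k$ both sides of the symmetric sum are zero and give no information on the unsigned power sums), but since your argument only uses $p_2$ and $p_4$, this does not affect correctness.
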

\begin{proof}
We can check it by direct calculation.
Indeed, let $x$ and $y$ denote the first and the second coordinates, respectively. Then the sums of $x^k$ or $y^k$ for $0 \leq k \leq 5$ coincide by (\ref{Borwein}). 
By the symmetry of (\ref{2dimBorwein}), the sums of odd monomials are $0$. 
The sums of $xy$, $x^3y$, $x^2y^2$ and $xy^3$ are $-2(m^2+3)(n^2+3)$, $-2(m^2+3)^2(n^2+3)^2$, $2(m^2+3)^2(n^2+3)^2$ and $-2(m^2+3)^2(n^2+3)^2$ respectively. 
\end{proof}

We now come to a key lemma (\cref{vsBorwein}) in the proof of \cref{2BvsAT}. 
We shall recall the Alpers--Tijdeman solution (\ref{ATsol}), namely
\begin{multline*}
 [ (0,0),(2a + b,b),(3a + b,3a + 3b),(2a,6a + 4b),(-b,6a + 3b),(-a-b,3a + b)]\\
 =_5 [(2a,0),(3a + b,3a + b),(2a + b,6a + 3b),(0,6a + 4b),(-a-b,3a + 3b),(-b,b)],
\end{multline*} 
and denote by $\{(\alpha_i,\beta_i) \}_{i=1}^6$ the first six points of the Alpers--Tijdeman solution (\ref{ATsol}). 
Suppose that (\ref{ATsol}) contains (\ref{2dimPTE}) over $\mathbb{Q}$.
Since (\ref{ATsol}) is symmetric, there exist
\begin{align*}
M:=\begin{pmatrix}
A & B\\
C & D
\end{pmatrix} \in M_2(\mathbb{Q}), && \mathbf{e}:=
\begin{pmatrix}
E\\
F
\end{pmatrix} \in \mathbb{Q}^2
\end{align*}
such that 
\begin{align*}
\begin{pmatrix}
6(A+3B)a+12Bb\\
6(C+3D)a+12Db\
\end{pmatrix}+ 6\begin{pmatrix}
E\\
F
\end{pmatrix} = \sum_{i=1}^6 M \begin{pmatrix}
 \alpha_i\\
 \beta_i\\
 \end{pmatrix} + 6 \mathbf{e}=\mathbf{0},
\end{align*}
i.e.,
\begin{align*}
\begin{pmatrix}
E\\
F
\end{pmatrix} =
\begin{pmatrix}
-(A+3B)a-2Bb\\
-(C+3D)a-2Db\
\end{pmatrix}.
\end{align*}

With the above affine transformation, the solution $(\ref{ATsol})$ is equivalent to
\begin{align} \label{ATsol2} 
\begin{split}
&\left[\pm (-(A+3B)a-2Bb,-(C+3D)a-2Db), \right. \\
&\left.  \pm((-A+3B)a+(-A+B)b,(-C+3D)a+(-C+D)b), \pm (2Aa+(A+B)b, 2Ca+(C+D)b) \right]\\
=_5&\left[\pm ((A-3B)a-2Bb, (C-3D)a-2Db), \right. \\
&\left.  \pm(-2Aa+(-A+B)b, -2Ca+(-C+D)b), \pm ((A+3B)a+(A+B)b,(C+3D)a+(C+D)b) \right],
\end{split}
\end{align}
which is, as with (\ref{2dimPTE}) (recall \cref{EDsymlin}), 
a symmetric solution that consists of two pairs of three linear points on each side.
We also note that, with the translation by $(-a,-3a-2b)$,
the solution (\ref{ATsol}) is equivalent to
\begin{multline} \label{ATsol-2} 
[\pm (a, 3a+2b), \pm (a+b,-3a-b), \pm (-2a-b,-b)]\\
=_5 [\pm (a, -3a-2b), \pm (a+b,3a+b), \pm (-2a-b,b)],
\end{multline}
which is again a symmetric solution that consists of two pairs of three linear points on each side.

\begin{proposition} \label{vsBorwein}
The two-dimensional Borwein solution $(\ref{2dimBorwein})$ contains our parametric solution $(\ref{2dimPTE})$ over $\mathbb{Q}$. 
\end{proposition}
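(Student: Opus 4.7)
The plan is to apply the cyclic lifting \cref{lifting}, which reduces the required two-dimensional containment to a one-dimensional scalar containment between the underlying triples. Write the two-dimensional Borwein solution (\ref{2dimBorwein}) in its cyclic form using the triples $(a_1,a_2,a_3) = (2(n+m),\, -nm-n-m+3,\, nm-n-m-3)$ and $(b_1,b_2,b_3)$ obtained by replacing $m$ with $-m$; similarly write (\ref{2dimPTE}) in cyclic form with the triples $(c_1,c_2,c_3) = (2t_1+1,\, t_1^2-1,\, -t_1(t_1+2))/p_1$ and $(d_1,d_2,d_3) = (2t_2+1,\, t_2^2-1,\, -t_2(t_2+2))/p_2$, where $p_i := t_i^2+t_i+1$. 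By \cref{lifting}, it suffices to find a nonzero rational $\beta$ and rational $(n,m)$ such that $a_i = \beta c_i$ and $b_i = \beta d_i$ for all $i$; the required affine transformation will then be $M = \beta^{-1} I$ with trivial translation.

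Since each triple sums to zero, the proportionality $(a_1,a_2,a_3) = \beta(c_1,c_2,c_3)$ reduces to two equations, namely $n+m = \beta(2t_1+1)/(2p_1)$ and $nm = 3 - \beta(2t_1^2+2t_1-1)/(2p_1)$; similarly $(b_i) = \beta(d_i)$ yields $n-m = \beta(2t_2+1)/(2p_2)$ and $nm = -3 + \beta(2t_2^2+2t_2-1)/(2p_2)$. Equating the two expressions for $nm$ determines $\beta$ as the explicit rational function
\[
\beta = \frac{12\, p_1 p_2}{p_2(2t_1^2 + 2t_1 - 1) + p_1(2t_2^2 + 2t_2 - 1)},
\]
after which $n, m \in \mathbb{Q}$ are obtained as the half-sum and half-difference of $n+m$ and $n-m$. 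The remaining compatibility, that $nm = \tfrac{1}{4}[(n+m)^2 - (n-m)^2]$ agrees with either prescribed value above, reduces after substituting the closed-form $\beta$ and clearing denominators to the polynomial identity
\[
3(2t+1)^2 + (2t^2 + 2t - 1)^2 = 4(t^2 + t + 1)^2,
\]
which is verified by direct expansion and geometrically reflects the fact that $(c_1, c_2) \in C_3(1)$, as established in the proof of \cref{Dstr}.

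The main obstacle is precisely this last consistency check: without the above polynomial identity the system would be overdetermined in $(n,m,\beta)$, and it is exactly the ellipse equation governing (\ref{2dimPTE}) that makes the cyclic-scalar ansatz succeed. A minor technical point is that the denominator of $\beta$ defines a proper $\mathbb{Q}$-algebraic subvariety of $(t_1,t_2)$-space; on its complement the explicit $M = \beta^{-1} I$ realizes the containment, and exceptional values on this locus can be handled separately by a sign change $b_i = -\beta d_i$ or by a cyclic permutation of the matched points.
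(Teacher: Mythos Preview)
Your proposal is correct and follows essentially the same route as the paper: both apply \cref{lifting} to reduce the two-dimensional containment to a one-dimensional scalar problem, then solve for the scalar (your $\beta$ is the reciprocal of the paper's $A$ in (\ref{A})) and verify the residual consistency via the identity $3(2t+1)^2+(2t^2+2t-1)^2=4(t^2+t+1)^2$, which is exactly the $C_3(1)$ equation. The only organizational difference is that the paper substitutes $n,m$ into the second equation to obtain a quadratic in $A$ and picks one root, whereas you equate the two expressions for $nm$ to solve linearly for $\beta$; these are equivalent manipulations, and your explicit mention of the degenerate locus (where the denominator of $\beta$ vanishes) is a point the paper leaves implicit.
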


\begin{proof} 
By \cref{lifting}, the statement can be reduced to showing that the one-dimensional Borwein solution (\ref{Borwein}) contains the parametric solution
\begin{multline} \label{1dimPTE}
\left[\pm \frac{(2t_1+1)}{t_1^2+t_1+1}, \pm \frac{(t_1^2-1)}{t_1^2+t_1+1}, 
\pm \frac{t_1(t_1+2)}{t_1^2+t_1+1} \right] \\
 =_5
\left[\pm \frac{(2t_2+1)}{t_2^2+t_2+1}, \pm \frac{(t_2^2-1)}{t_2^2+t_2+1}, 
\pm \frac{t_2(t_2+2)}{t_2^2+t_2+1} \right] 
\end{multline}
over $\mathbb{Q}$.

Let
\begin{equation} \label{eq:parameter1}
\begin{gathered}
x_1 = 2(n+m), \;\; x_2=-nm-n-m+3, \;\; x_3=nm-n-m-3,\\
y_1 = 2(n-m), \;\; y_2 = nm-n+m+3, \;\; y_3=-nm-n+m-3,\\
a_1 = \frac{2s+1}{s^2+s+1}, \;\; a_2 = \frac{s^2-1}{s^2+s+1}, \;\; a_3= \frac{-s(s+2)}{s^2+s+1},\\
b_1 = \frac{2t+1}{t^2+t+1}, \;\; b_2 = \frac{t^2-1}{t^2+t+1}, \;\; b_3 = \frac{-t(t+2)}{t^2+t+1}.
\end{gathered}
\end{equation}
It is obvious that the solutions (\ref{Borwein}) and (\ref{1dimPTE}) are symmetric. Also, since the two-dimensional solution (\ref{2dimPTE}) (resp.\ (\ref{2dimBorwein}))
satisfies the linearity condition (\ref{A-1}), by restricting all the points to the first components, we find that the one-dimensional solution (\ref{1dimPTE}) (resp.\ (\ref{Borwein})) also satisfies the linearity condition (\ref{A-1}).
Then by \cref{KeyLem}, it suffices to prove that for all $s$, $t \in \mathbb{Q}$, there exists $A \in \mathbb{Q}^{\times}$ such that
\begin{align} \label{simeq}
\begin{cases}
Ax_1 &= a_1,\\
Ax_2 &= a_2,\\
Ay_1 &= b_1,\\
Ay_2 &= b_2, 
\end{cases}
\end{align}
 since both $Ax_3=a_3$ and $Ay_3=b_3$ follow from the linearity, and both $A(-x_i)=-a_i$ and $A(-y_i)=-b_i$ for $1 \leq i \leq 3$ follow from the symmetry.

What we do below is to directly solve (\ref{simeq}) over $\mathbb{Q}$.
If $A = 0$, then we have $s = -1/2$ by the first equation, but $s = \pm 1$ by the second equation, which is a contradiction.

Thus we may consider the case $A \ne 0$.
Then the first and the third equations of (\ref{simeq}) imply that
\begin{align} \label{eq:parameter2}
n = \frac{a_1+b_1}{4A}, && \quad m = \frac{a_1-b_1}{4A}.
\end{align} 
Substituting them into the second equation of (\ref{simeq}), we have
\begin{align} \label{a2}
a_ 2=A(-nm-n-m+3) =
-\frac{a_1^2 - b_1^2}{16A}-\frac{a_1}{2}+3A,
\end{align}
i.e.,
\[
48A^2-8(a_1+2a_2)A-(a_1^2-b_1^2)=0,
 \]
which implies that
\[
A = \frac{(a_1+2a_2) \pm \sqrt{4a_1^2+4a_1a_2+4a_2^2-3b_1^2}}{12}.
\]
Note that
\begin{align} \label{a1+2a2}
a_1+2a_2=\frac{2s^2+2s-1}{s^2+s+1},
\end{align}
and
\begin{align*}
4a_1^2+4a_1a_2+4a_2^2-3b_1^2 &=
4-3\frac{(2t+1)^2}{(t^2+t+1)^2} =
\Big( \frac{2t^2+2t-1}{t^2+t+1} \Big)^2
\end{align*}
since $a^2+a_1a_2+a_2^2=1$ for $(a_1,a_2) \in C_3(1)$.
For example, when
\begin{align} \label{A}
A =\frac{1}{12} \left(\frac{2s^2+2s-1}{s^2+s+1}+\frac{2t^2+2t-1}{t^2+t+1} \right) 
=\frac{(2st+s+t-1)(2st+s+t+2)}{12(s^2+s+1)(t^2+t+1)},
\end{align}
it follows from (\ref{eq:parameter1}) that
\begin{align*} 
a_1+b_1 =\frac{(s+t+1)(2st+s+t+2)}{(s^2+s+1)(t^2+t+1)}, &&
a_1-b_1 = \frac{(t-s)(2st+s+t-1)}{(s^2+s+1)(t^2+t+1)},
\end{align*}
which imply, with (\ref{eq:parameter2}) and (\ref{A}), that 
\begin{align*}
n =\frac{a_1+b_1}{4A}=\frac{3(s+t+1)}{2st+s+t-1}, &&
m =\frac{a_1-b_1}{4A}=\frac{3(t-s)}{2st+s+t+2}.
\end{align*}
Now, it follows from (\ref{a2}) and (\ref{eq:parameter2}) that
\begin{align*} 
a_2 = -A \cdot \frac{a_1+b_1}{4A} \cdot \frac{a_1-b_1}{4A} - \frac{a_1}{2} + 3A = -Anm - \frac{a_1}{2}+3A.
\end{align*}
By combining this with the third equation of (\ref{simeq}), we have
\begin{align} \label{Ay2}
Ay_2 = Anm - \frac{A}{2} \cdot 2(n-m) + 3A = -\frac{a_1}{2}-a_2 -\frac{b_1}{2}+6A. 
\end{align}
Finally, by substituting (\ref{eq:parameter1}), (\ref{a1+2a2}) and (\ref{A}) into (\ref{Ay2}), we obtain
\begin{align*} 
Ay_2
& = \frac{1}{2}\left(-a_1-2a_2 -\frac{(2t+1)}{t^2+t+1}+\left(a_1+2a_2+\frac{2t^2+2t-1}{t^2+t+1}\right)\right),
\end{align*}
which coincides with $b_2$.
\end{proof}

We are now in a position to complete the proof of \cref{2BvsAT}.

\begin{proof} [{Proof of \cref{2BvsAT}}]
We first prove that the modified Alpers--Tijdeman solution (\ref{ATsol2}) contains the two-dimensional Borwein solution
\begin{multline}  \label{2dB1}
[\pm (2(n+m),-nm-n-m+3), \pm (-nm-n-m+3,nm-n-m-3),\pm (nm-n-m-3, 2(n+m)) ]\\
=_5  [\pm (2(n-m),nm-n+m+3), \pm (nm-n+m+3,-nm-n+m-3),\pm (-nm-n+m-3, 2(n-m)) ].
\end{multline}
By the linearity of the solutions and \cref{KeyLem}, it is sufficient to show that,
for all $m, n \in \mathbb{Q}$, there exist $A$, $B$, $C$, $D$, $a$, $b \in \mathbb{Q}$ such that
\begin{align} \label{claim}
\begin{split}
\begin{cases} 
\pm (-(A+3B)a-2Bb,-(C+3D)a-2Db) &=\pm (2(n+m),-nm-n-m+3),\\
\pm ((-A+3B)a+(-A+B)b,(-C+3D)a+(-C+D)b) &=\pm (-nm-n-m+3,nm-n-m-3),\\
\pm ((A-3B)a-2Bb, (C-3D)a-2Db) &=\pm (2(n-m),nm-n+m+3),\\
\pm (-2Aa+(-A+B)b, -2Ca+(-C+D)b) &=\pm (nm-n+m+3,-nm-n+m-3). 
\end{cases}
\end{split}
\end{align}
By the symmetry of the solutions, it suffices to consider (\ref{claim}) only for positive signs as follows:
\begin{align} \label{simeq-2} 
\begin{cases}
(-A - 3B)a-2Bb & = 2(n+m),\\
(-A + 3B)a+(-A+B)b & = -nm-n-m+3,\\
(A - 3B)a-2Bb & = 2(n-m),\\
-2Aa+(-A+B)b & = nm-n+m+3,\\
(-C -3D)a-2Db & = -nm-n-m+3,\\
(-C + 3D)a+(-C+D)b & = nm-n-m-3,\\
(C - 3D)a-2Db  & = nm-n+m+3, \\
-2Ca+(-C+D)b &  = -nm-n+m-3.
\end{cases}
\end{align}

We may assume that $m \neq 0$ since otherwise the two multisets in (\ref{2dB1}) coincide. 
What we do below is to solve (\ref{simeq-2}) for $a=1$; the argument below works for other values of $a$, but we here let $a=1$ just for simplicity.
Then the first and the third equations of (\ref{simeq-2}) imply that
\begin{align} \label{eq1-9}
A=-2m, && b=-\frac{n}{B}-\frac{3}{2}.
\end{align}
Substituting (\ref{eq1-9}) into the second equation of (\ref{simeq-2}), we have
\begin{align*}
3B^2+(2mn-6)B-4mn=0.
\end{align*}
Thus, we have $B=-2mn/3$ or $B=2$.
For example, when $B=-2mn/3$,
we have $b=-3(m-1)/(2m)$. 
Then the fifth and the seventh equations of (\ref{simeq-2}) imply that
\begin{align*}
C=m(n+1), && D=\frac{m(n-3)}{3}.
\end{align*}
Concerning the fourth, the sixth and the eighth equations of (\ref{simeq-2}), the right-hand sides follow by substituting the values of $A$, $B$, $C$, $D$, $a$ and $b$ into the left-hand sides.
Therefore, the Alpers--Tijdeman solution (\ref{ATsol}) (or equivalently (\ref{ATsol2}))  
contains the two-dimensional Borwein solution (\ref{2dB1}).

Conversely, we prove that the solution (\ref{2dB1}) contains an equivalent form (\ref{ATsol-2}) of (\ref{ATsol}).
By the linearity of the solutions and \cref{KeyLem}, it is sufficient to show that,
for all $m, n \in \mathbb{Q}$, there exist $A$, $B$, $C$, $D$, $a$, $b \in \mathbb{Q}$ such that
\begin{multline} \label{claim-1}
\begin{split}
\begin{cases} 
\pm (2A(n+m)+B(-nm-n-m+3),2C(n+m)+D(-nm-n-m+3))
\\ \quad
=\pm (a, 3a+2b),\\
\pm (A(-nm-n-m+3)+B(nm-n-m-3),C(-nm-n-m+3)+D(nm-n-m-3))\\ 
\quad
=\pm (a+b,-3a-b),\\
\pm (2A(n-m)+B(nm-n+m+3),2C(n-m)+D(nm-n+m+3))
\\ \quad
=\pm (a, -3a-2b),\\
\pm (A(nm-n+m+3)+B(-nm-n+m-3),C(nm-n+m+3)+D(-nm-n+m-3))\\
\quad
=\pm (-2a-b,b).
\end{cases}
\end{split}
\end{multline}
By the symmetry of the solutions, it suffices to consider (\ref{claim-1}) only for positive signs as follows:
\begin{align} \label{simeq-3} 
\begin{cases}
2A(n+m)+B(-nm-n-m+3)&=a,\\
A(-nm-n-m+3)+B(nm-n-m-3) &=a+b,\\
2A(n-m)+B(nm-n+m+3) & =a,\\
A(nm-n+m+3)+B(-nm-n+m-3) &=-2a-b,\\
2C(n+m)+D(-nm-n-m+3) &=3a+2b,\\
C(-nm-n-m+3)+D(nm-n-m-3) &=-3a-b,\\
2C(n-m)+D(nm-n+m+3) &=-3a-2b,\\
C(nm-n+m+3)+D(-nm-n+m-3) &=b.
\end{cases}
\end{align}

We may assume that
$0 \notin \{a, 3a+2b\}$, since otherwise the two multisets in (\ref{ATsol-2}) coincide. 
What we do below is to solve (\ref{simeq-3}) for $n=0$; the argument below works for other values of $n$, but we here let $n=0$ just for simplicity.
Then the first and the third equations of (\ref{simeq-3}) imply that 
\begin{align} \label{eq1-10}
A= \frac{a}{6}, && B=\frac{a}{3}.
\end{align}
Substituting (\ref{eq1-10}) into the second equation of (\ref{simeq-3}), we have
\begin{align*}
m=-\frac{3a+2b}{a}.
\end{align*}
Then the fifth and the seventh equations of (\ref{simeq-3}) imply that
\begin{align*}
C=- \frac{a}{2}, && D=0.
\end{align*}
Concerning the fourth, the sixth and the eighth equations of (\ref{simeq-3}), the right-hand sides follow by substituting the values of $A$, $B$, $C$, $D$, $m$ and $n$ into the left-hand sides.
Therefore, the two-dimensional Borwein solution (\ref{2dB1}) contains the Alpers--Tijdeman solution (\ref{ATsol}).

Finally, by \cref{vsBorwein}, we obtain the latter statement, which completes the proof of the theorem.
\qedhere
\end{proof} 

\begin{remark} \label{2dimto1dim}
There is a solution in Borwein's parametric solution that is not contained in our parametric solution (\ref{1dimPTE}). 
Indeed, if we substitute $(m,n)=(2,4)$ in Borwein's solution, then we obtain 
\[ [\pm1, \pm 11, \pm 12] =_5 [\pm 4, \pm 9, \pm 13] \]
 of \cite[p.\ 87]{Borwein2002}. 
 This solution is not equivalent to our solution (\ref{1dimPTE}) over $\mathbb{Q}$.
Indeed, suppose contrary. Then there exists $A \in \mathbb{Q}$ such that
\begin{align*}
532A^2 &=2 A^2 (1^2+11^2+12^2)\\
&= 2\left( \left(\frac{2t+1}{t^2+t+1} \right)^2
+ \left(-\frac{(t^2-1)}{t^2+t+1} \right)^2
+ \left(\frac{t(t+2)}{t^2+t+1} \right)^2\right)=4.
\end{align*}
Therefore, $A^2=1/133$, which contradicts $A \in \mathbb{Q}$.
\end{remark}

\section{Concluding remarks} \label{sect:conclusion}

The following fact ensures the existence of an ellipsoidal $n$-design with sufficiently many points:

\begin{theorem} [{\cite{Arias1988}}] \label{thm:Arias} 
Let $f_1,\ldots,f_l$ be continuous functions on a closed interval $[a,b]$. 
Then there exist $x_1,\ldots,x_N \in [a,b]$ such that
\[
\frac{1}{N} \sum_{i=1}^N f_k(x_i) = \frac{1}{b-a} \int_a^b f_k(t) 
dt
\ \text{for every $k=1,\ldots,l$}.
\]
\end{theorem}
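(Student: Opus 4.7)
The plan is to reduce the claim to showing that $0 \in \mathbb{R}^l$ admits an exact equal-weight representation. Without loss of generality I would assume $\frac{1}{b-a}\int_a^b f_k(t)\,dt = 0$ for every $k$, by replacing each $f_k$ with $g_k := f_k - \frac{1}{b-a}\int_a^b f_k$. Setting $F: [a,b] \to \mathbb{R}^l$ to be $F(x) := (g_1(x), \ldots, g_l(x))$, the target identity becomes the symmetric statement $\sum_{i=1}^N F(x_i) = 0$.

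First I would observe that $0 = \frac{1}{b-a}\int_a^b F(t)\,dt$ exhibits $0$ as a continuous convex combination of the values of $F$; hence $0$ lies in the convex hull $K$ of the compact set $F([a,b]) \subset \mathbb{R}^l$. By Carath\'{e}odory's theorem, there exist points $y_1, \ldots, y_{l+1} \in [a,b]$ and nonnegative weights $\lambda_1, \ldots, \lambda_{l+1}$ summing to $1$ with $\sum_{j=1}^{l+1} \lambda_j F(y_j) = 0$. If the $\lambda_j$ happened to be rational with common denominator $N$, taking $n_j := N\lambda_j$ copies of $y_j$ would finish the proof immediately; the substantive task is to upgrade this possibly irrational representation to one with uniform weights $1/N$.

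The hard part will be this exact-equality step, which I would handle via a topological argument on the map
\[
\Phi_N : [a,b]^N \to \mathbb{R}^l, \qquad \Phi_N(x_1, \ldots, x_N) = \frac{1}{N}\sum_{i=1}^N F(x_i).
\]
The key claim is that for all sufficiently large $N$, the image of $\Phi_N$ contains an open neighborhood of $0$ within the affine hull of $F([a,b])$. Granted this, a rational approximation $\lambda_j \approx n_j/N$ places a candidate configuration within $O(1/N)$ of $0$, and a Brouwer-degree or implicit-function-type perturbation then adjusts the $x_i$ to annihilate the residual error exactly. The main obstacle is the degenerate possibility that $0$ lies on the boundary of $K$ when the $f_k$ satisfy a hidden affine relation; this would force working modulo the minimal affine subspace containing $F([a,b])$ and proceeding by induction on $l$, with the base case $l = 1$ being a direct consequence of the intermediate value theorem applied to a two-point family. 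This combinatorial-topological input is precisely the content of the result, for which I would ultimately invoke \cite{Arias1988}.
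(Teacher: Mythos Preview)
The paper does not prove this theorem at all: it is stated with a citation to \cite{Arias1988} and then immediately \emph{applied} (by specializing $[a,b]=[0,2\pi]$ and taking the $f_k$ to be monomials in the ellipsoidal parametrization) to deduce the existence of ellipsoidal $n$-designs with sufficiently many points. There is nothing to compare your argument against.

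As for the sketch itself, the reduction to $\int F = 0$ and the Carath\'eodory step are fine, but the ``hard part'' is where all the content lies, and your treatment of it is circular: you end by saying you would invoke \cite{Arias1988}, which is the very result being stated. The Brouwer-degree/perturbation idea you gesture at is indeed one route to the theorem (and is close in spirit to Seymour--Zaslavsky), but making it rigorous requires real work---in particular, showing that $0$ is an \emph{interior} point of the convex hull of $\Phi_N([a,b]^N)$ in the appropriate affine subspace, and that a small perturbation of a rational-weight configuration can correct an $O(1/N)$ error. Your handling of the boundary case (``induction on $l$'') is too vague to count as a proof. If you want to actually prove the theorem rather than cite it, you would need to fill in this topological step completely; otherwise, simply citing \cite{Arias1988} as the paper does is the honest option.
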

Indeed, we can take $[a,b]=[0,2\pi]$ and 
\begin{align*}
f_{i,j}(t) :=
\begin{cases}
 (\sqrt{r} \cos t )^i ( \sqrt{r}\sin t/\sqrt{D})^j & (D \equiv 1, 2 \pmod{4}),\\
 (\sqrt{r}(\cos t-\sin t/\sqrt{D}) )^i ( 2\sqrt{r} \sin t/\sqrt{D} )^j  & (D \equiv 3 \pmod{4})
\end{cases}
\end{align*}
with $0 \leq i+j \leq n$ as $f_k$'s in our case.
For a refinement of \cite{Arias1988} including the asymptotic behavior of the number of points of spherical designs, we refer the readers to \cite[Theorem 1]{BRV}.

Cui et al. \cite[Theorem 1.3]{CXX} showed the existence of designs over $S^1$, 
where one component of each point is in $\mathbb{Q}(\sqrt{q} \mid q \; \mbox{is a prime})$.
Similar results can also be obtained for ellipsoidal designs, but we will not go into them in this paper.

\begin{center}
* * *
\end{center}

Mishima et al.\ \cite[Theorem 1.4 (i)]{MLSU}  
 classified all designs of degree $5$ with $6$ rational points for Chebyshev measure $(1-t^2)^{-1/2}dt/\pi$ and then obtained the following parametric ideal solutions for $\PTE_1$:
\begin{multline}  \label{MLSU}
  \left[\pm \frac{(2t_1^2-22t_1-13)}{14(t_1^2+t_1+1)}, \pm \frac{(-13t_1^2-4t_1+11)}{14(t_1^2+t_1+1)}, 
\pm \frac{(11t_1^2+26t_1+2)}{14(t_1^2+t_1+1)} \right]\\
 =_5 
  \left[\pm \frac{(2t_2^2-22t_2-13)}{14(t_2^2+t_2+1)}, \pm \frac{(-13t_2^2-4t_2+11)}{14(t_2^2+t_2+1)}, 
\pm \frac{(11t_2^2+26t_2+2)}{14(t_2^2+t_2+1)} \right] \; (t_1, \; t_2 \in \mathbb{Q}).
\end{multline} 
This ideal solution is not equivalent to our parametric solution (\ref{1dimPTE}) over $\mathbb{Q}$.
Suppose contrary. Then by considering 
sums of squares, there exists $A \in \mathbb{Q}$ such that
$3A^2 =4$, i.e., $A^2=4/3$, which is a contradiction. 

It is remarkable that the parametric solution (\ref{MLSU}) is obtained from $C_3(3/4)$, as seen from the following result:
\begin{proposition} \label{LMSU} 
Let 
\begin{align*}
x_1'
= \frac{2t^2-22t-13}{14(t^2+t+1)}, && x_2'
= \frac{-13t^2-4t+11}{14(t^2+t+1)}, 
&& x_3'
= \frac{11t^2+26t+2}{14(t^2+t+1)}.
\end{align*}
Then, 
\[\{\pm (x_1',x_2'), \pm (x_2',x_3'), \pm (x_3', x_1') \} \subset C_3(3/4) \]
is an ellipsoidal $5$-design.
\end{proposition}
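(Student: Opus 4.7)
The plan is to pass to the complex identification of \cref{elleq}: for $\zeta := (1+\sqrt{-3})/2 = e^{\sqrt{-1}\pi/3}$, each point $(x,y) \in C_3(r)$ corresponds to $x + y\zeta \in \mathbb{C}$ of modulus $\sqrt{r}$. I set $\xi_i := x_i' + x_{i+1}'\zeta$ for $i = 1, 2, 3$ (indices mod $3$) and aim to show that $\xi_1, \xi_2, \xi_3$ are vertices of an equilateral triangle inscribed in $\{z \in \mathbb{C} : |z| = \sqrt{3}/2\}$. If this holds, then $\{\pm \xi_i\}_{i=1}^{3}$ is a regular hexagon, and the $5$-design property will follow at once from \cref{elleq} and the standard roots-of-unity identity. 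This strategy is parallel to \cref{Dstr}, which rationally parametrizes a regular hexagon on $C_3(1)$; here the same thing is done on $C_3(3/4)$.

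The argument will rest on two polynomial identities in $\mathbb{Q}(t)$: \textup{(i)} $x_1' + x_2' + x_3' = 0$, which is immediate by summing the three numerators; and \textup{(ii)} $(x_1')^2 + x_1' x_2' + (x_2')^2 = 3/4$, i.e., $(x_1', x_2') \in C_3(3/4)$, a routine polynomial check after clearing the common denominator $14(t^2+t+1)$.

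I would then use $\zeta^2 = \zeta - 1$ to obtain $\omega := e^{2\pi\sqrt{-1}/3} = \zeta^2$ and $\omega^2 = \zeta^4 = -\zeta$. A short calculation will yield
\[
\omega^2 \xi_i = -\zeta(x_i' + x_{i+1}'\zeta) = x_{i+1}' - (x_i' + x_{i+1}')\zeta = x_{i+1}' + x_{i+2}'\zeta = \xi_{i+1},
\]
where the third equality uses \textup{(i)}. Since $|\omega^2| = 1$, identity \textup{(ii)} forces $|\xi_i| = \sqrt{3}/2$ for every $i$, so $(x_2', x_3')$ and $(x_3', x_1')$ also lie on $C_3(3/4)$ and $\xi_1, \xi_2, \xi_3$ form an equilateral triangle. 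Consequently $\{\pm \xi_i\}_{i=1}^{3} = \{\xi_1 e^{\sqrt{-1}k\pi/3} : 0 \le k \le 5\}$, and for each $1 \le j \le 5$ one has $\sum_{k=0}^{5}(\xi_1 e^{\sqrt{-1}k\pi/3})^{j} = \xi_1^{j} \sum_{k=0}^{5} e^{\sqrt{-1}jk\pi/3} = 0$ because $e^{\sqrt{-1}j\pi/3} \ne 1$. \cref{elleq} then completes the proof.

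The main obstacle is identity \textup{(ii)}, a degree-$4$ polynomial check in $t$ that is mechanical but tedious; everything else reduces to a few lines of algebra in $\mathbb{Q}(\zeta)$.
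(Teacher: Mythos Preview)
Your argument is correct, and it is genuinely different from the paper's own proof. The paper simply verifies all three norm identities $x_i'^2+x_i'x_{i+1}'+x_{i+1}'^2=3/4$ and then states that ``the conditions on moments can be checked by direct calculation by using \cref{measure}'' --- i.e.\ a brute-force evaluation of the moment integrals and sums. You instead exploit the complex identification of \cref{elleq}: after checking the single identity (ii) and the linear relation (i), the key computation $\omega^{2}\xi_i=\xi_{i+1}$ shows the six points form a regular hexagon on $\{|z|=\sqrt{3}/2\}$, and the vanishing of the power sums for $1\le j\le 5$ then follows from the standard geometric-series identity. This is closer in spirit to the structural classification of tight designs in \cref{tight} (i) and \cref{tight2m+1} than to the paper's computational proof, and it explains \emph{why} the $5$-design property holds (the set is an orbit of the cyclic group $C_6^{(Q)}$), whereas the paper merely verifies it. A small bonus of your route is that you only need to check one quartic identity in~$t$ rather than three, since the other two norm conditions come for free once the hexagon structure is established.
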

\begin{proof}
Since
\begin{align*}
x_1'^2+x_1'x_2'+x_2'^2=x_2'^2+x_2'x_3'+x_3'^2=x_3'^2+x_3'x_1'+x_1'^2=\frac{3}{4},
\end{align*}
we obtain
\[\pm (x_1',x_2'), \; \pm (x_2',x_3'), \;  \pm (x_3', x_1') \in C_3(3/4). 
 \]
Since $\Lambda^{3/4}_3=\emptyset$, \cref{TDD} is not directly applicable, but the conditions on moments can be checked by direct calculation by using \cref{measure}. 
\end{proof}

By applying the ^^ product formula' (see \cite[Theorem 1.2.2]{Tchernychova}) to $5$-designs with $6$ rational points as in (\ref{MLSU}), we can construct a
$5$-design with $6^2$ rational points  for the equilibrium measure 
$\prod_{i=1}^2(1-x_i)^{-1/2} dx_1dx_2/\pi^2$ (see \cite{HL2025}), 
and thereby obtain a solution of $\PTE_2$ of degree $5$ and size $36$.
Whereas, the parametric solution (\ref{MLSU}) given by \cref{DtoPTE,LMSU} has only $6$ rational points, which is therefore ideal.

\cref{vsBorwein} states that Borwein's solution includes a design structure.
A natural question asks whether there exists a parametric ideal solution of degree $5$ for $\PTE_1$ except for Borwein's solution (\ref{Borwein}).

\begin{theorem} [{\cite[pp.\ 629--630]{Chernick} Chernick's solution}]
There exists a parametric ideal solution of $\PTE_1$ given by
\begin{multline*} 
[\pm (-5m^2+4mn-3n^2), \pm (-3m^2+6mn+5n^2), \pm (-m^2-10mn-n^2)] \\
=_5 [\pm (-5m^2+6mn+3n^2), \pm (-3m^2-4mn-5n^2), \pm (-m^2+10mn-n^2)].
\end{multline*} 
\end{theorem}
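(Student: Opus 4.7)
The plan is to reduce the claimed PTE identity of degree $5$ to two polynomial identities in $\mathbb{Q}[m,n]$ and then verify those identities directly. Since both sides of the stated solution are of the form $\{\pm A, \pm B, \pm C\}$, i.e., symmetric under negation, the power sums $p_k:=\sum_i a_i^k$ vanish identically for $k=1,3,5$ on each side. Thus the condition $=_5$ collapses to the two even-degree cases $k=2$ and $k=4$.

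For these, I would apply Newton's identities to the full six-element multiset. The $\pm$-symmetry forces all odd-degree elementary symmetric functions of $\{\pm A,\pm B,\pm C\}$ to vanish, so Newton's formulas give
\begin{equation*}
p_2=2\sigma_1, \qquad p_4=2\sigma_1^2-4\sigma_2,
\end{equation*}
where $\sigma_1:=A^2+B^2+C^2$ and $\sigma_2:=A^2B^2+A^2C^2+B^2C^2$. Hence the whole problem further collapses to verifying the two polynomial identities
\begin{equation*}
A^2+B^2+C^2=A'^2+B'^2+C'^2,\quad A^2B^2+A^2C^2+B^2C^2=A'^2B'^2+A'^2C'^2+B'^2C'^2
\end{equation*}
in $\mathbb{Q}[m,n]$, where primes denote the three polynomials on the right-hand side of the stated solution.

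The first identity is between homogeneous polynomials of total degree $4$; a direct expansion of the six squares shows that both sides equal $35m^4-56m^3n+154m^2n^2+56mn^3+35n^4$. The second identity is a homogeneous polynomial identity of total degree $8$, and I expect it to be the main obstacle of the proof --- not conceptually but calculationally, since it reduces to a finite comparison of coefficients. This verification is mechanical and can be carried out by hand or by a computer algebra system. In the same spirit as \cref{vsBorwein}, one might further hope for a structural explanation via an affine change of the parameter $(m,n)$ relating the left-hand and right-hand multisets, but a short analysis of the relevant quadratic forms suggests that no such $\mathbb{Q}$-linear substitution exists, so the direct computational verification appears to be essentially unavoidable.
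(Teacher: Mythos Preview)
The paper does not prove this statement; it is cited from Chernick and serves only as input to \cref{thm:Chernick1}. Your reduction is correct: the $\pm$-symmetry kills the odd power sums, and Newton's identities reduce $k=2,4$ to matching $\sigma_1=A^2+B^2+C^2$ and $\sigma_2=A^2B^2+B^2C^2+C^2A^2$ on the two sides, after which everything is a finite polynomial check. The paper itself records the factorisation $\sigma_1=7(5m^2+2mn+n^2)(m^2-2mn+5n^2)$ in the proof of \cref{thm:Chernick1}, agreeing with your degree-$4$ computation.

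One correction to your closing remark: a $\mathbb{Q}$-linear change of parameter \emph{does} exist. Under $(m,n)\mapsto(n,-m)$ one checks directly that
\[
A(n,-m)=B'(m,n),\qquad B(n,-m)=-A'(m,n),\qquad C(n,-m)=C'(m,n),
\]
so the left multiset $\{\pm A,\pm B,\pm C\}$ is carried onto the right multiset $\{\pm A',\pm B',\pm C'\}$. This is exactly the structural shortcut you were hoping for: the $=_5$ condition becomes the invariance of $\sigma_1$ and $\sigma_2$ under $(m,n)\mapsto(n,-m)$. For $\sigma_1$ this is immediate from the factorisation above, since the two quadratic factors are interchanged; for $\sigma_2$ it at least halves the work of the direct comparison.
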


\begin{theorem} \label{thm:Chernick1}
Chernick's parametric solution is not equivalent to our parametric solution $(\ref{1dimPTE})$ over $\mathbb{Q}$.
 \end{theorem}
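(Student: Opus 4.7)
The plan is to adapt the sum-of-squares argument from \cref{2dimto1dim} and exhibit a specialization of Chernick's parametric family that cannot be equivalent to any specialization of $(\ref{1dimPTE})$ over $\mathbb{Q}$.

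First, I would reduce any hypothetical equivalence to a pure scaling. Both families are manifestly symmetric, so if there exist $(m,n), (t_1,t_2) \in \mathbb{Q}^2$ and an affine map $x \mapsto Ax + e$ with $A \in \mathbb{Q}^{\times}$, $e \in \mathbb{Q}$ realizing the equivalence, then \cref{KeyLem} forces $e = 0$. Writing $x_1,x_2,x_3$ for the three positive representatives on Chernick's first side and $a_1,a_2,a_3$ for those of $(\ref{1dimPTE})$, the multiset equality of $\{\pm Ax_i\}$ with $\{\pm a_j\}$ yields, after squaring the entries and using that signs drop out,
\[
A^2 \, (x_1^2 + x_2^2 + x_3^2) = a_1^2 + a_2^2 + a_3^2.
\]

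Next, I would evaluate both sides. The points of $(\ref{1dimPTE})$ lie on $C_3(1)$, so $a_1^2 + a_1 a_2 + a_2^2 = 1$, and the explicit parametrization gives the linearity $a_1 + a_2 + a_3 = 0$; hence
\[
a_1^2 + a_2^2 + a_3^2 = 2(a_1^2 + a_1 a_2 + a_2^2) = 2,
\]
independently of $t_1$. For Chernick, I would specialize at $(m,n) = (1,2)$, obtaining $(x_1,x_2,x_3) = (-9, 29, -25)$ with $x_1^2 + x_2^2 + x_3^2 = 81 + 841 + 625 = 1547 = 7 \cdot 13 \cdot 17$.

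Substituting gives $A^2 = 2/1547$, whose squarefree part $2 \cdot 7 \cdot 13 \cdot 17$ is not $1$, so $A^2$ is not a square in $\mathbb{Q}$, contradicting $A \in \mathbb{Q}^{\times}$. This shows Chernick's solution at $(m,n) = (1,2)$ is not equivalent to any member of the family $(\ref{1dimPTE})$, and hence the two parametric families are inequivalent. The only mild subtlety is verifying that the hypotheses of \cref{KeyLem} apply, namely the symmetry of both sides (immediate from the $\pm$ notation) and, on the $(a_i)$-side, the linearity $a_1+a_2+a_3=0$ used to compute $\sum a_i^2$; both checks are trivial, and everything else reduces to the one arithmetic verification that $2 \cdot 1547$ is non-square.
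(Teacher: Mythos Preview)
Your argument is correct. Both you and the paper begin identically: reduce the hypothetical equivalence to a pure scaling via the symmetry clause of \cref{KeyLem}, and then compare sums of squares, using that $\sum a_i^2 = 2$ on the $(\ref{1dimPTE})$ side independently of the parameter. From there the two proofs diverge. You specialize Chernick at $(m,n)=(1,2)$, compute $\sum x_i^2 = 1547 = 7\cdot 13\cdot 17$, and observe that $A^2 = 2/1547$ is not a rational square; one failed specialization already blocks the containment $(\ref{1dimPTE}) \supset \text{Chernick}$ and hence equivalence. The paper instead keeps $(m,n)$ generic, factors the Chernick sum of squares as $7(5m^2+2mn+n^2)(m^2-2mn+5n^2)$, and shows that the resulting genus-$1$ curve $y^2 = 14(5x^2+2x+1)(x^2-2x+5)$ has no $\mathbb{Q}_7$-points, hence no rational points at all. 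Your route is more elementary and entirely sufficient for the stated theorem; the paper's route is heavier but yields the stronger conclusion that \emph{no} specialization of Chernick is equivalent over $\mathbb{Q}$ to any member of $(\ref{1dimPTE})$.
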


\begin{proof}
Suppose contrary. Then there exists $A \in \mathbb{Q}$ such that
\[
 14A^2(5m^2+2mn+n^2)(m^2-2mn+5n^2) =4
\] 
since
\begin{multline*} 
(-5m^2+4mn-3n^2)^2+(-3m^2+6mn+5n^2)^2+(-m^2-10mn-n^2)^2\\
= 7(5m^2+2mn+n^2)(m^2-2mn+5n^2).
\end{multline*} 
If $n=0$, then $70A^2m^4=4$, which contradicts $A$, $m \in \mathbb{Q}$.
Thus $n \neq 0$.
Let $x =m/n$, $y = 2/(An)$, and consider the smooth curve
\[
 C: y^2=14(5x^2+2x+1)(x^2-2x+5)
\]
of genus $1$.
Since $C(\mathbb{Q}_7)=\emptyset$, we conclude that $C(\mathbb{Q})=\emptyset$.

Indeed, suppose that there exists $(x,y) \in C(\mathbb{Q})$.
If $x \in \mathbb{Z}$, then by reducing $x$ modulo $7$, 
the $7$-adic valuation of the right-hand side is $1$, which is impossible.
Thus, $x \not\in \mathbb{Z}$. Write $x=u/7^n$ for some $n \in \mathbb{N}$ and $u \in \mathbb{Q}$ such that $\ord_7(u)=0$.
Then we have
\[
(7^{2n}y)^2=14(5u^2+2u \cdot 7^n+7^{2n})(u^2-2\cdot 7^n+5 \cdot 7^{2n}).
\]
Since $\ord_7(u)=0$, the $7$-adic valuation of the right-hand side is $1$, which is impossible.
This completes the proof.
\end{proof}

We can also check $C(\mathbb{Q}_7)=\emptyset$ by ``IsLocallySoluble" command of MAGMA \cite{Bosma-Cannon-Playoust}.

To study the interrelations between $\PTE_r$ and geometric designs is also left for a challenging future work.

\begin{center}
* * *
\end{center}

Coppersmith et al.\ \cite{CMSV} investigated the PTE problem over certain imaginary quadratic number fields,
and obtained ideal solutions of class number $1$ (\cref{h=1}).
They constructed ideal solutions with 
degree $9$ or $11$ over $\mathbb{Z}[\sqrt{-1}]$, 
degree $8$ over $\mathbb{Z}[\sqrt{-2}]$ and 
degree $8$ or $11$ over $\mathbb{Z}[\sqrt{-3}]$.
On the other hand, we can construct a curious parametric solution
over imaginary quadratic fields with class number possibly $\geq 2$.

In \cref{DtoPTE}, we have provided a construction of solutions of the PTE problem from a pair of ellipsoidal designs. We shall look at a construction of solutions in a number field of class number $h$ (see \cref{h=1}) from a different design structure.

\begin{proposition} \label{Chebyshev3-2}
There exists a parametric ideal solution of the PTE problem given by
\begin{align} \label{Chebyshev3-2}
 \left[z_1, \frac{-z_1-3 \pm \sqrt{-3z_1^2-6z_1-5}}{2} \right]
 =_2  \left[ z_2, \frac{-z_2-3 \pm \sqrt{-3z_2^2-6z_2-5}}{2} \right].
 \end{align}
\end{proposition}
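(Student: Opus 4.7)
The plan is a direct Vieta-style verification that the first two power sums of the triple $\{z, \alpha_+(z), \alpha_-(z)\}$, where $\alpha_\pm(z) := \frac{-z-3 \pm \sqrt{-3z^2-6z-5}}{2}$, are independent of the parameter $z$. First, I would observe that $\alpha_+(z)$ and $\alpha_-(z)$ are the two roots of the monic quadratic $t^2 + (z+3)\,t + (z^2 + 3z + \tfrac{7}{2}) = 0$, which is read off from the explicit formula via the quadratic formula by matching half-sum and discriminant.

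Next, Vieta's formulas give $\alpha_+(z) + \alpha_-(z) = -(z+3)$ and $\alpha_+(z)\alpha_-(z) = z^2 + 3z + \tfrac{7}{2}$, from which I would compute
\[
z + \alpha_+(z) + \alpha_-(z) = -3, \qquad z^2 + \alpha_+(z)^2 + \alpha_-(z)^2 = z^2 + (z+3)^2 - (2z^2 + 6z + 7) = 2.
\]
Both expressions are constants independent of $z$, so any two parameter values $z_1$ and $z_2$ produce triples sharing the same first and second power sums, which is precisely the $=_2$ condition of a size-$3$ degree-$2$ solution; since $3 = 2+1$, the solution is ideal in the sense of \S\ref{sect:Intro}.

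The remaining task is to exhibit $z_1 \neq z_2$ for which the two multisets are genuinely disjoint. For $z_1, z_2 \in \mathbb{Q}$ with the values $-3z_1^2 - 6z_1 - 5$ and $-3z_2^2 - 6z_2 - 5$ lying in distinct square classes of $\mathbb{Q}^{\times}$, the two triples live in different imaginary quadratic fields over $\mathbb{Q}$, so disjointness is automatic; generic choices otherwise work equally well. I anticipate no substantial obstacle in this proof: the content is entirely algebraic, and the conceptual point is only that the configuration $\{z, \alpha_+(z), \alpha_-(z)\}$ is the generic triple whose first two power sums are pinned at $(-3, 2)$, mirroring the design-theoretic viewpoint emphasised throughout the paper and making precise the different ``design structure'' alluded to in the paragraph introducing the proposition.
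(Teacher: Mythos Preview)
Your proof is correct. You verify directly via Vieta that the triple $\{z,\alpha_+(z),\alpha_-(z)\}$ has first power sum $-3$ and second power sum $2$, independently of $z$; this immediately gives the $=_2$ relation, and your disjointness remark is adequate.

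The paper takes a different route. Rather than verifying the identity, it \emph{derives} the parametrisation: it introduces the Bessel-weight moments $a_k=-\tfrac{1}{4\pi\sqrt{-1}}\int_{S^1}z^k e^{-2/z}\,dz$, evaluates them by the residue theorem as $a_k=(-2)^k/(k+1)!$, and then looks for an equally-weighted three-point design matching $a_1$ and $a_2$, i.e.\ a triple with $\sum z_i=-3$ and $\sum z_i^2=2$. Eliminating one variable and solving the resulting quadratic produces exactly the formula in the proposition. Your argument is shorter and entirely elementary, but it treats the constants $-3$ and $2$ as given; the paper's argument explains where they come from and supplies the link to ``weighted designs for the Bessel weight'' promised in the paragraph preceding the proposition and exploited in the subsequent discussion of class-number-$h$ solutions.
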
 
\begin{proof}
Consider the following complex integration:
\[a_k:=-\frac{1}{4 \pi \sqrt{-1}} \int_{S^1 \subset \mathbb{C}} z^k e^{-\frac{2}{z}} dz. \]
We determine $z_1$, $z_2$, $z_3$ such that
\begin{align*} 
\frac{1}{3} z_1^k+\frac{1}{3} z_2^k+\frac{1}{3} z_3^k=a_k, \quad k=1,2,
\end{align*}
which corresponds to a {\it $($weighted$)$ design for the Bessel weight}; the details will be explained later.
Since $a_k=(-2)^k/(k+1)!$
by the residue theorem, we have
\begin{align*}
z_1+z_2+z_3 &=3a_1=-3,\\
z_1^2+z_3^2+z_3^2&=3a_2=2.
\end{align*}
By the first equality, $z_3=-z_1-z_2-3$.
We substitute it into the second to obtain
\[2z_1^2+2z_2^2+2z_1z_2+6z_1+6z_2+7=0 .\]
Then we obtain
\[
z_1=\frac{-z_2-3 \pm \sqrt{-3z_2^2-6z_2-5}}{2},
\]
which provides a parametric solution of degree $2$ given by  (\ref{Chebyshev3-2}).
\end{proof}

By specializing $z_1$ and $z_2$, we can obtain solutions of the PTE problem of degree $2$ over imaginary quadratic fields with class number $\geq 2$. 

\begin{definition} [{PTE solution of class number $h$}] \label{h=1}
A solution of the one-dimensional PTE problem is called a {\it solution of class number $h$} if all the solutions are in a number field of class number $h$.
For the $r$-dimensional PTE problem, it means that all the components of solutions are
in a number field of class number $h$.
\end{definition}

\begin{example}
 Let $(z_1,z_2)=(-2,0)$ in (\ref{Chebyshev3-2}).
Then we obtain the following ideal solution of the PTE problem of class number $2$:
\begin{align*} 
\left[-2, \frac{-1 \pm \sqrt{-5}}{2}\right] =_2 
\left[0,  \frac{-3 \pm \sqrt{-5}}{2} \right].
\end{align*}
 \end{example}
 
Actually, the integration $a_k$ appeared in (\ref{Chebyshev3-2}) is the $k$-th moment of Bessel polynomials
\begin{align*}
y_n(z):=\sum_{k=0}^n \frac{(n+k)!}{(n-k)!k!} \left(\frac{z}{2} \right)^k.
\end{align*}
An ellipsoidal design for $D = 1$, i.e., a spherical design over $S^1$, is closely related to Gegenbauer polynomials (for example see \cite[Section 5]{SU20}).
As with Gegenbauer polynomials, Bessel polynomials are also in the class of {\it classical orthogonal polynomials} (see~\cite[\S2.1]{SU20} for the definition)
and are orthogonal with respect to the weight function 
\[
w(z)=-\frac{1}{4 \pi \sqrt{-1}} e^{-\frac{2}{z}},
\]
where the path of integration is $S^1 \subset \mathbb{C}$, i.e.,
\[
\int_{S^1} y_m(z)y_n(z)w(z)dz=0
\]
for $m \neq n$ (cf.\ \cite[p.\ 104]{KF}). 
In \cite{M_Bessel}, the first author proved the existence and non-existence of
`weighted' designs for the weight function of Bessel polynomials, as an extension of results of Sawa--Uchida \cite{SU20} for classical orthogonal polynomials such as Hermite, Legendre and Jacobi polynomials.

We close this paper by mentioning some open questions, to which we will intend to return in the near future.
\begin{problem}
\hangindent\leftmargini
\textup{(i)}\hskip\labelsep 
Does there exist an infinite family of designs of degree $\geq 3$ for Bessel polynomials? 
If this is the case, how large can be the class number of the corresponding parametric solution of $\PTE_1$.
\begin{enumerate}
\setcounter{enumi}{1}
\item[(ii)] Explore the connection between ellipsoidal designs and designs for Bessel polynomials.
\end{enumerate}
\end{problem}

\noindent {\bf Acknowledgements.}
The authors are grateful to Eiichi Bannai who kindly informed us \cite[p.\ 208, Problem 2]{BBIT_book} and then motivated us to study the existence, as well as
construction of rational ellipsoidal designs.
They also thank Yukihiro Uchida for carefully reading the earlier draft and giving helpful comments and suggestions on the presentation of the proof \cref{2BvsAT}.
Finally, they would like to thank Akihiro Munemasa, Hiroaki Narita, Koji Tasaka for giving us valuable informations on ellipsoidal harmonics and geometric designs,
and Kosuke Minami for fruitful discussions on the Stroud-bound for spherical designs.

\begin{bibdiv}
\begin{biblist}
\bibselect{quadrature}
\end{biblist}
\end{bibdiv} 

\end{document}